\newtheorem{theorem}{Theorem}[section]
\newtheorem{corollary}[theorem]{Corollary}
\newtheorem{lemma}[theorem]{Lemma}
\newtheorem{proposition}[theorem]{Proposition}
\newtheorem{conjecture}[theorem]{Conjecture}
\theoremstyle{definition}
\newtheorem{definition}[theorem]{Definition}
\newtheorem{remark}[theorem]{Remark}
\newtheorem*{question}{Question}
\newtheorem{example}[theorem]{Example}
\theoremstyle{remark}
\renewcommand{\theclaim}{\textup{\theclaim}}
\numberwithin{equation}{section}
\def\openone
\newbox\ipbox
\newcommand{\diracb}[1]{\left\langle #1\mathrel{\mathchoice

{\setbox\ipbox=\hbox{$\displaystyle \left\langle\mathstrut
#1\right.$}

\vrule height\ht\ipbox width0.25pt depth\dp\ipbox}

{\setbox\ipbox=\hbox{$\textstyle \left\langle\mathstrut
#1\right.$}

\vrule height\ht\ipbox width0.25pt depth\dp\ipbox}

{\setbox\ipbox=\hbox{$\scriptstyle \left\langle\mathstrut
#1\right.$}

\vrule height\ht\ipbox width0.25pt depth\dp\ipbox}

{\setbox\ipbox=\hbox{$\scriptscriptstyle \left\langle\mathstrut
#1\right.$}

\vrule height\ht\ipbox width0.25pt depth\dp\ipbox}

}\right. }
\newcommand{\dirack}[1]{\left. \mathrel{\mathchoice

{\setbox\ipbox=\hbox{$\displaystyle \left.\mathstrut
#1\right\rangle$}

\vrule height\ht\ipbox width0.25pt depth\dp\ipbox}

{\setbox\ipbox=\hbox{$\textstyle \left.\mathstrut
#1\right\rangle$}

\vrule height\ht\ipbox width0.25pt depth\dp\ipbox}

{\setbox\ipbox=\hbox{$\scriptstyle \left.\mathstrut
#1\right\rangle$}

\vrule height\ht\ipbox width0.25pt depth\dp\ipbox}

{\setbox\ipbox=\hbox{$\scriptscriptstyle \left.\mathstrut
#1\right\rangle$}

\vrule height\ht\ipbox width0.25pt depth\dp\ipbox}

} #1\right\rangle}
\newcommand{\cj}[1]{\overline{#1}}
\newcommand{\bz}{\mathbb{Z}}
\newcommand{\br}{\mathbb{R}}
\newcommand{\bc}{\mathbb{C}}
\newcommand{\bt}{\mathbb{T}}
\newcommand{\bn}{\mathbb{N}}
\def\blfootnote{\xdef\@thefnmark{}\@footnotetext}
\renewcommand{\mod}{\operatorname{mod}}
\def\-{^{-1}}
\def\ty{\emptyset}
\def\lcm{\textup{lcm}}
\begin{document}
  \onehalfspacing

\title[On spectral sets of integers]{On spectral sets of integers}
\author{Dorin Ervin Dutkay}
\address{[Dorin Ervin Dutkay] University of Central Florida\\
	Department of Mathematics\\
	4000 Central Florida Blvd.\\
	P.O. Box 161364\\
	Orlando, FL 32816-1364\\
U.S.A.\\} \email{Dorin.Dutkay@ucf.edu}

\author{Isabelle Kraus}

\address{[Isabelle Kraus] University of Central Florida\\
	Department of Mathematics\\
	4393 Andromeda Loop N.\\
	Orlando, FL 32816-1364\\
U.S.A.\\} \email{izzy.kraus@knights.ucf.edu}

\thanks{} 
\subjclass[2010]{05B45,65T50}
\keywords{spectral set, tiling, cyclotomic polynomial, Coven-Meyerowitz property}

\begin{abstract}
Based on tiles and on the Coven-Meyerowitz property, we present some examples and some general constructions of spectral subsets of integers.
\end{abstract}
\maketitle \tableofcontents

\section{Introduction}

		\begin{definition}\label{def5.4}
	Let $G$ be a locally compact abelian group and let $\widehat G$ be its Pontryagin dual group. A finite subset $A$ of $G$ is called {\it spectral } (in $G$) if there exists a subset $\Lambda$ of $\widehat G$ with $\#\Lambda=\#A$ such that 
	\begin{equation}
	\frac{1}{\#A}\sum_{a\in A}\varphi(a)\cj\varphi'(a)=\delta_{\varphi\varphi'},\quad(\varphi,\varphi'\in \Lambda)
	\label{eq5.4.1}
	\end{equation}
In this case, $\Lambda$ is called a {\it spectrum} for $A$ (in the group $\widehat G$). 	
	\end{definition}
	
	It is easy to check that the spectral property can be rephrased in the following ways.
	
	\begin{proposition}\label{pr5.5}
	Let $A$ be a finite subset of a locally compact abelian group and $\Lambda$ a finite subset of $\widehat G$ with $\#\Lambda=\#A$. The following statements are equivalent:
	\begin{enumerate}
		\item $\Lambda$ is a spectrum for $A$. 
		\item The matrix 
		\begin{equation}
		\frac{1}{\sqrt{\#A}}\left(\varphi(a)\right)_{a\in A,\varphi\in\Lambda}
		\label{eq5.5.1}
		\end{equation}
		is unitary.
		\item For every $a,a'\in A$,
		\begin{equation}
		\frac{1}{\#\Lambda}\sum_{\varphi\in\Lambda}\varphi(a-a')=\delta_{aa'}.
		\label{eq5.5.2}
		\end{equation}
	\end{enumerate}
	\end{proposition}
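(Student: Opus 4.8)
The plan is to package all three statements as assertions about the single $n\times n$ matrix
\[
M \;=\; \frac{1}{\sqrt{\#A}}\left(\varphi(a)\right)_{a\in A,\ \varphi\in\Lambda},\qquad n:=\#A=\#\Lambda,
\]
which is precisely the matrix appearing in \eqref{eq5.5.1}. The only structural input needed is that every $\varphi\in\widehat G$ is a continuous homomorphism into the unit circle, so that $\abs{\varphi(g)}=1$ and $\cj{\varphi(g)}=\varphi(g)^{-1}=\varphi(-g)$ for all $g\in G$; everything else is linear algebra on $\mathbb{C}^n$.

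First I would observe that statement (1), i.e.\ \eqref{eq5.4.1}, is exactly the assertion that the columns of $M$ form an orthonormal family in $\mathbb{C}^n$: the Hermitian inner product of the column indexed by $\varphi$ with the column indexed by $\varphi'$ is $\frac{1}{\#A}\sum_{a\in A}\varphi(a)\cj{\varphi'(a)}$, which \eqref{eq5.4.1} declares to be $\delta_{\varphi\varphi'}$. In matrix terms this reads $M^*M=I_n$. Since $M$ is square, $M^*M=I_n$ forces $MM^*=I_n$ as well, i.e.\ $M$ is unitary, which is statement (2). Hence (1)$\Leftrightarrow$(2).

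Next I would translate statement (3). The Hermitian inner product of the row of $M$ indexed by $a$ with the row indexed by $a'$ is $\frac{1}{\#A}\sum_{\varphi\in\Lambda}\varphi(a)\cj{\varphi(a')}$; using $\cj{\varphi(a')}=\varphi(-a')$ together with $\varphi(a)\varphi(-a')=\varphi(a-a')$, this equals $\frac{1}{\#\Lambda}\sum_{\varphi\in\Lambda}\varphi(a-a')$. Therefore \eqref{eq5.5.2} is precisely the statement that the rows of $M$ are orthonormal, i.e.\ $MM^*=I_n$, which as above is equivalent to $M$ being unitary. So (3)$\Leftrightarrow$(2), closing the cycle of equivalences.

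There is no serious obstacle here: the argument is bookkeeping with the two index sets and the conjugate-transpose convention, plus the elementary fact that for a square matrix a one-sided inverse is two-sided. The single point that must not be skipped is the hypothesis $\#\Lambda=\#A$, which is used exactly once — when orthonormality of the columns is upgraded to unitarity — since for a rectangular $M$ the relations $M^*M=I$ and $MM^*=I$ are no longer equivalent; indeed Definition~\ref{def5.4} builds in equal cardinalities precisely so that the spectrum condition becomes symmetric in $A$ and $\Lambda$ in the form recorded by (3).
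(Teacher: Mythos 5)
Your proof is correct, and since the paper itself omits the argument (dismissing it as ``easy to check''), your linear-algebra packaging --- columns orthonormal $\Leftrightarrow M^*M=I$, rows orthonormal $\Leftrightarrow MM^*=I$, with the character identity $\varphi(a)\cj{\varphi(a')}=\varphi(a-a')$ translating (3) and the square-matrix hypothesis $\#\Lambda=\#A$ doing the one nontrivial step --- is exactly the intended standard verification. Nothing is missing.
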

	
	We will be interested mainly in the spectral subsets of $\bz$ and the spectral subsets of $\bz_N$. Since the dual group of $\bz$ is the group $\bt=\{z\in\bc :|z|=1\}$ which can be identified with $[0,1)$, a finite subset $A$ of $\bz$ is spectral if and only if there exists a finite subset $\Lambda$ in $[0,1)$ (or $\br$) such that $\#\Lambda=\#A$ and the matrix 
	$$\frac{1}{\sqrt{\#A}}\left(e^{2\pi i a\lambda}\right)_{a\in A,\lambda\in\Lambda}$$
	is unitary. 
	
	Since the dual group of $\bz_N$ is $\bz_N$, a subset $A$ of $\bz_N$ is spectral if and only if there exists a subset $\Lambda$ of $\bz_N$ such that  $\#A=\#\Lambda$ and the matrix 
	$$\frac1{\sqrt{\#A}}\left(e^{2\pi i a\lambda/N}\right)_{a\in A,\lambda\in \Lambda}$$
	is unitary.

	\begin{definition}\label{deftile}
	For two subsets $A$ and $B$ of $\bz$, we write $A\oplus B$ to indicate that for each $c\in A+B$ there are unique numbers $a\in A$ and $b\in B$ such that $a+b=c$. 
	
	For a set $A$ of non-negative integers we denote by $A(x)$ the {\it associated polynomial}
	$$A(x)=\sum_{a\in A}x^a.$$

	A subset $A$ of $\bz$ is called a {\it tile} if there exists a subset $C$ of $\bz$ such that $A\oplus C=\bz$. 
	\end{definition}

	In 1974 \cite{Fug74}, Fuglede proposed a conjecture that states that Lebesgue measurable spectral sets in $\br^n$ coincide with sets that tile $\br^n$. The conjecture was disproved by Tao \cite{Tao05} in dimensions five and higher and later in dimensions three and higher \cite{MR2267631,MR2237932,MR2264214,MR2159781}. All these counterexamples were based on some constructions in finite groups, so the Fuglede conjecture fails for groups of the form $\bz_{N_1}\times\bz_{N_2}\times\bz_{N_3}$.  However, the conjecture is still open at this moment in dimensions one and two. It is known that the Fuglede conjecture in $\br$, under some additional hypotheses, can be reduced to the Fuglede conjecture for $\bz$, see \cite{DuLa14}. 
	
	\begin{conjecture}\label{cofu}{\bf [Fuglede's conjecture for $\bz$]} A finite subset of $\bz$ is spectral if and only if it is a tile. 
	\end{conjecture}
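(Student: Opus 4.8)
This is one of the best-known open problems in the subject, so what follows is a research program rather than a proof; indeed, the genuine obstacle described below is exactly what has kept Conjecture~\ref{cofu} open. The line of attack I would take is the one organized around the Coven--Meyerowitz conditions. For a finite set $A\subseteq\bz_{\geq 0}$ with associated polynomial $A(x)=\sum_{a\in A}x^{a}$, let $S_{A}$ denote the set of prime powers $p^{\alpha}$ for which the cyclotomic polynomial $\Phi_{p^{\alpha}}(x)$ divides $A(x)$, and recall the two conditions: (T1) $\#A=\prod_{p^{\alpha}\in S_{A}}p$; and (T2) whenever $p_{1}^{\alpha_{1}},\dots,p_{k}^{\alpha_{k}}\in S_{A}$ for distinct primes $p_{i}$, the polynomial $\Phi_{p_{1}^{\alpha_{1}}\cdots p_{k}^{\alpha_{k}}}(x)$ also divides $A(x)$. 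Two facts from the literature power the argument: Coven and Meyerowitz proved that every tile of $\bz$ satisfies (T1), and that (T1) together with (T2) forces $A$ to tile $\bz$; {\L}aba proved that (T1) together with (T2) forces $A$ to be spectral.

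The implication ``tile $\Rightarrow$ spectral'' then reduces to a single missing ingredient: a tile automatically satisfies (T1), so if one could show that every tile also satisfies (T2), {\L}aba's theorem would immediately produce a spectrum. That every tile of $\bz$ satisfies (T2) is the Coven--Meyerowitz conjecture; it is known when $\#A$ has at most two distinct prime factors --- which already proves this direction in those cases --- but open in general.

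For the converse ``spectral $\Rightarrow$ tile,'' the plan is to prove that a spectral set satisfies (T1) and (T2) and then quote the Coven--Meyerowitz implication. The (T1) half is known: starting from the unitarity relation of Proposition~\ref{pr5.5}, a vanishing-sum-of-roots-of-unity analysis of $\widehat{\mathbf{1}_{A}}$ evaluated on the spectrum pins down enough prime-power cyclotomic factors of $A(x)$ to yield $\#A=\prod_{p^{\alpha}\in S_{A}}p$. The substantive part is (T2): one must show that the existence of a spectrum also forces the mixed-modulus divisibilities $\Phi_{p_{1}^{\alpha_{1}}\cdots p_{k}^{\alpha_{k}}}(x)\mid A(x)$. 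The standard route here is to pass to a finite cyclic group --- a tiling of $\bz$ may be taken periodic, and, as is well known, a finite spectral subset of $\bz$ has a rational spectrum, so both properties can be studied inside some $\bz_{N}$ --- and then to induct on the number of prime divisors of $N$, restricting the spectral property to subgroups and quotients of $\bz_{N}$ and feeding it into the structure theory of vanishing sums of roots of unity.

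The main obstacle is that neither (T2) for tiles nor (T2) for spectral sets is available in general: each is, in essence, an unresolved statement about the cyclotomic divisors of $\{0,1\}$-polynomials, and every known verification of Fuglede's conjecture for $\bz$ (equivalently, for the individual groups $\bz_{N}$) occurs in the regime where $N$ has few prime factors --- $N=p^{a}q^{b}$ being the prototype --- where the combinatorics of $S_{A}$ can be kept under explicit control. Without a genuinely new device for treating three or more primes simultaneously, the program above stalls precisely at this step, which is why Conjecture~\ref{cofu} is still open.
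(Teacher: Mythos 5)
The statement you were asked about is Conjecture~\ref{cofu}, and the paper offers no proof of it --- it is stated precisely as an open problem. You correctly recognize this, and the program you sketch (tiles satisfy (T1) by Theorem~\ref{th4.13}; (T1)+(T2) give tiling by Theorem~\ref{thcm} and spectrality by Theorem~\ref{thlab}; the two-prime case of (T2) is Theorem~\ref{thcob2}; the missing ingredient in general is (T2)) is exactly the Coven--Meyerowitz/{\L}aba framework the paper itself is organized around, so there is nothing to fault in your treatment of the ``tile $\Rightarrow$ spectral'' direction. One caution on the converse: you assert that the (T1) half for \emph{spectral} sets ``is known.'' The only (T1) result the paper cites is for tiles, and to my knowledge it is not established in general that a spectral subset of $\bz$ satisfies (T1); what is elementary is only the divisibility $\prod_{s\in S_A}\Phi_s(1)\mid \#A$, and upgrading that to equality from the unitarity relation in Proposition~\ref{pr5.5} is itself an open step, not a routine ``vanishing-sum-of-roots-of-unity analysis.'' So the converse direction of your program has two unresolved steps, not one; otherwise your assessment of where the difficulty lies is accurate.
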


	A basic result (see \cite{New77,CoMe99}) shows that every tiling set $C$ is {\it periodic}, i.e., there exists $N\in\bn$ such that $C+N=C$. If $B$ is any set consisting of one representative from $C$ for each class modulo $N$, then $C=B\oplus N\bz$ and so $A\oplus (B\oplus N\bz)=\bz$, and therefore $A\oplus B$ is a complete set of representatives modulo $N$. 
	
	\begin{proposition}\label{pr3.2}\cite{CoMe99}
Let $N$ be a positive integer and $A$, $B$ sets of non-negative integers. The following statements are equivalent:
\begin{enumerate}
	\item $A\oplus(B\oplus N\bz)=\bz$.
	\item $A\oplus B$ is a complete set of representatives of $\bz_N$. In other words $A\oplus B=\bz_N$, where addition is understood modulo $N$.
	\item $A(x)B(x)\equiv 1+x+\dots+x^{N-1}\mod (x^N-1)$. 
	\item $A(1)B(1)=N$ and for every factor $t>1$ of $N$, the cyclotomic polynomial $\Phi_t(x)$ divides $A(x)$ or $B(x)$. 
\end{enumerate}

\end{proposition}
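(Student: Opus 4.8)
The plan is to prove the four statements equivalent by going around the cycle $(1)\Leftrightarrow(2)\Leftrightarrow(3)\Leftrightarrow(4)$, translating the tiling condition at each stage into a progressively more algebraic form: first into a statement about $\bz_N$, then into a polynomial congruence modulo $x^N-1$, and finally into divisibility by cyclotomic polynomials.

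For $(1)\Leftrightarrow(2)$ I would argue combinatorially by reducing modulo $N$. Starting from (1), reducing the identity $A\oplus(B\oplus N\bz)=\bz$ modulo $N$ shows every residue class is represented, and a counting argument on the interval $\{0,1,\dots,MN-1\}$ with $M\to\infty$ gives $\#A\cdot\#B=N$; uniqueness of the decomposition $n=a+b+kN$ then forces the map $A\times B\to\bz_N$, $(a,b)\mapsto a+b\bmod N$, to be a bijection, which is exactly (2). Conversely, from (2) every $n\in\bz$ has its residue written uniquely as $a+b$ with $a\in A,\ b\in B$, hence $n=a+b+kN$ uniquely, which is (1). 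The only thing to watch here is keeping the various uniqueness claims and the cardinality identity straight.

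For $(2)\Leftrightarrow(3)$ the point is simply that the coefficient of $x^k$ in $A(x)B(x)$ reduced modulo $x^N-1$ counts the pairs $(a,b)\in A\times B$ with $a+b\equiv k\pmod N$; statement (2) says each of these counts equals $1$ for $k=0,\dots,N-1$, which is literally the congruence in (3). For $(3)\Leftrightarrow(4)$ I would use the factorizations $x^N-1=\prod_{d\mid N}\Phi_d(x)$ and $1+x+\dots+x^{N-1}=\prod_{d\mid N,\,d>1}\Phi_d(x)$. Assuming (3), evaluating the congruence at $x=1$ gives $A(1)B(1)=N$, and for every factor $t>1$ of $N$ the polynomial $\Phi_t$ divides both $x^N-1$ and $1+x+\dots+x^{N-1}$, hence divides $A(x)B(x)$; since $\Phi_t$ is irreducible over $\bq$, it divides $A(x)$ or $B(x)$, which is (4). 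For the converse, (4) makes every $\Phi_t$ with $t\mid N,\ t>1$ divide $A(x)B(x)$, and since these cyclotomics are pairwise coprime their product $\frac{x^N-1}{x-1}$ divides $A(x)B(x)$; then $(x-1)A(x)B(x)$ is a multiple of $x^N-1$, so the residue $P(x)$ of $A(x)B(x)$ modulo $x^N-1$ (of degree $<N$, with $P(1)=A(1)B(1)=N$) satisfies $(x-1)P(x)=c(x^N-1)$ for a constant $c$, whence $P(x)=c(1+x+\dots+x^{N-1})$, and evaluating at $1$ forces $c=1$, giving (3).

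The step I expect to be the main obstacle is the last one, $(4)\Rightarrow(3)$: one must check that divisibility of $A(x)B(x)$ by the product of all the cyclotomic factors of $x^N-1$ other than $\Phi_1=x-1$, together with the single normalization $A(1)B(1)=N$, already determines $A(x)B(x)$ modulo $x^N-1$ uniquely — this is where the degree bookkeeping has to be done honestly. The remaining implications are essentially a change of language between the tiling, group-theoretic, and polynomial pictures (and $(1)\Leftrightarrow(2)\Leftrightarrow(3)$ go back to Newman \cite{New77}).
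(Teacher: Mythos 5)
Your proof is correct. The paper itself gives no proof of this proposition --- it is quoted directly from \cite{CoMe99} --- and your argument (reduction mod $N$ for $(1)\Leftrightarrow(2)$, counting coefficients mod $x^N-1$ for $(2)\Leftrightarrow(3)$, and the factorization $\frac{x^N-1}{x-1}=\prod_{d\mid N,\,d>1}\Phi_d(x)$ together with irreducibility of the $\Phi_t$ and the degree/normalization bookkeeping for $(3)\Leftrightarrow(4)$) is exactly the standard one from that source, with the one genuinely delicate step, $(4)\Rightarrow(3)$, handled correctly.
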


Thus, tiles for $\bz$ coincide with tiles for the groups $\bz_N$. 

In \cite{CoMe99}, Coven and Meyerowitz found a sufficient condition for a subset of $\bz$ to be a tile, formulated in terms of cyclotomic polynomials. 
	
\begin{theorem}\label{thcm}\cite{CoMe99} Let $A$ be a finite set of non-negative integers with corresponding polynomial $A(x)=\sum_{a\in A}x^a$. Let $S_A$ be the set of prime powers $s$ such that the cyclotomic polynomial $\Phi_s(x)$ divides $A(x)$. Consider the following conditions on $A(x)$. 
\begin{enumerate}
	\item[(T1)] $A(1)=\prod_{s\in S_A}\Phi_s(1)$. 
	\item[(T2)] If $s_1,\dots,s_m\in S_A$ are powers of distinct primes, then the cyclotomic polynomial $\Phi_{s_1\dots s_m}(x)$ divides $A(x)$.
	
	\end{enumerate}
	If $A(x)$ satisfies (T1) and (T2), then $A$ tiles the integers with period $N:=\lcm(S_A)$. The tiling set $B$ can be obtained as follows: define $B(x)=\prod \Phi_s(x^{t(s)})$, where the product is taken over all prime power factors $s$ of $N$ which are not in $S_A$, and $t(s)$ is the largest factor of $N$ relatively prime to $s$. Then $B(x)$ is the polynomial associated to a set of non-negative integers $B$. 
	\end{theorem}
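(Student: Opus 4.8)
The plan is to verify, via Proposition \ref{pr3.2}, that $A$ and the prescribed $B$ form a tiling pair with modulus $N=\lcm(S_A)$; by the equivalence of the tiling statement (i) with the cyclotomic condition (iv) there, it suffices to establish three things: (a) every coefficient of $B(x)=\prod\Phi_s(x^{t(s)})$ lies in $\{0,1\}$, so that $B(x)$ really is the polynomial of a finite set $B$ of non-negative integers; (b) $A(1)B(1)=N$; and (c) for every divisor $t>1$ of $N$, the cyclotomic polynomial $\Phi_t(x)$ divides $A(x)$ or $B(x)$. I will freely use the standard identities $\Phi_{p^k}(1)=p$ for a prime power $p^k$ and $\Phi_{p^j}(y)=\sum_{k=0}^{p-1}y^{kp^{j-1}}$. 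For each prime $p\mid N$ put $\alpha_p:=v_p(N)$, $t_p:=N/p^{\alpha_p}$, and $E_p:=\{k\ge1:p^k\in S_A\}$; since $N=\lcm(S_A)$ we have $\alpha_p=\max E_p$, and the largest divisor of $N$ coprime to $p^j$ equals $t_p$ for every $1\le j\le\alpha_p$, so $t(p^j)=t_p$.

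Part (b) is quick: (T1) together with $\Phi_{p^k}(1)=p$ gives $A(1)=\prod_{s\in S_A}\Phi_s(1)=\prod_{p\mid N}p^{\#E_p}$, while directly from its definition $B(1)=\prod_s\Phi_s(1)=\prod_{p\mid N}p^{\alpha_p-\#E_p}$, where $s$ runs over the prime power divisors of $N$ not in $S_A$. Multiplying gives $A(1)B(1)=\prod_{p\mid N}p^{\alpha_p}=N$.

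For part (c) I split on cases after writing $t=\prod_{i\in I}p_i^{j_i}$ with distinct primes $p_i$ and $1\le j_i\le\alpha_{p_i}$. If $p_i^{j_i}\in S_A$ for every $i\in I$, then for $\#I=1$ we get $t\in S_A$ and $\Phi_t\mid A$ by the definition of $S_A$, while for $\#I\ge2$ the $p_i^{j_i}$ are powers of distinct primes in $S_A$, so $\Phi_t\mid A$ by (T2). If instead $s_0:=p_{i_0}^{j_{i_0}}\notin S_A$ for some $i_0\in I$, then $s_0$ is a prime power divisor of $N$ outside $S_A$, so $g(x):=\Phi_{s_0}(x^{t_{p_{i_0}}})$ is one of the factors of $B(x)$, and I claim $\Phi_t\mid g$. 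To see this, let $\zeta$ be a primitive $t$-th root of unity; since $p_{i_0}\nmid t_{p_{i_0}}$ whereas every other prime power $p_i^{j_i}$ of $t$ divides $t_{p_{i_0}}=N/p_{i_0}^{\alpha_{p_{i_0}}}$, one has $\gcd(t,t_{p_{i_0}})=t/p_{i_0}^{j_{i_0}}$, so $\zeta^{t_{p_{i_0}}}$ has order $p_{i_0}^{j_{i_0}}=s_0$ and $g(\zeta)=\Phi_{s_0}(\zeta^{t_{p_{i_0}}})=0$. As the primitive $t$-th roots of unity are simple roots of $\Phi_t$, this yields $\Phi_t\mid g$, hence $\Phi_t\mid B(x)$.

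Part (a) takes the most bookkeeping. Using $\Phi_{p^j}(x^{t_p})=\sum_{k=0}^{p-1}x^{kp^{j-1}t_p}$ and grouping the factors of $B(x)$ by prime, write $B(x)=\prod_{p\mid N}B_p(x)$ with $B_p(x)=\prod_{j\in\{1,\dots,\alpha_p\}\setminus E_p}\Phi_{p^j}(x^{t_p})$. Expanding $B_p$, its exponents are $t_p\sum_j k_jp^{j-1}$ with $0\le k_j\le p-1$; distinct digit tuples give distinct $\sum_j k_jp^{j-1}$, so each $B_p$ is a $\{0,1\}$-polynomial with exponents in $t_p\{0,1,\dots,p^{\alpha_p}-1\}$. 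A typical exponent of $B$ is then a sum $\sum_{p\mid N}t_pm_p$ with $0\le m_p<p^{\alpha_p}$, and since $p^{\alpha_p}$ divides $t_q$ for every prime $q\ne p$ dividing $N$ while $t_p$ is invertible modulo $p^{\alpha_p}$, reducing such a sum modulo $p^{\alpha_p}$ recovers $m_p$ one prime at a time; hence these exponents are pairwise distinct and $B(x)$ has coefficients in $\{0,1\}$. With (a), (b), (c) in hand, Proposition \ref{pr3.2} gives $A\oplus(B\oplus N\bz)=\bz$, so $A$ tiles $\bz$ with period $N$. I expect the crux to be the second case of part (c) — recognizing that a single factor $\Phi_{s_0}(x^{t(s_0)})$ of $B(x)$ already absorbs all of $\Phi_t$ — while the main technical care goes into the gcd/order computations there and the coprimality accounting in part (a).
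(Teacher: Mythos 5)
The paper states Theorem \ref{thcm} as a cited result from \cite{CoMe99} and gives no proof of its own, so there is nothing internal to compare against; your argument is, in essence, the original Coven--Meyerowitz proof. It is correct and complete: the reduction to Proposition \ref{pr3.2}(iv), the verification that $B(x)$ has $\{0,1\}$ coefficients via the base-$p$ digit expansion of each $B_p$ and the CRT-style recovery of $m_p$ from the exponent modulo $p^{\alpha_p}$, the count $A(1)B(1)=N$ from (T1), and the key observation that a single factor $\Phi_{s_0}(x^{t(s_0)})$ of $B(x)$ already contains $\Phi_t(x)$ whenever some prime-power constituent $s_0$ of $t$ lies outside $S_A$ (via the order computation $\gcd(t,t_{p_{i_0}})=t/s_0$) are all sound.
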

	
		\begin{definition}\label{defcm}
	A finite set $A$ of non-negative integers is said to satisfy the {\it Coven-Meyerowitz property} (or the CM-property), if  it satisfies conditions (T1) and (T2) in Theorem \ref{thcm}. We call the tiling set $B$ in Theorem \ref{thcm}, the {\it Coven-Meyerowitz (CM) tiling set} associated to $A$, and we denote it by $B=\operatorname{CM}(A)$.
	\end{definition}
	
	The converse of the Coven-Meyerowitz theorem also seems to be true, but at the moment, it is just a conjecture. Coven and Meyerowitz showed that tiles satisfy the (T1) property.
		\begin{theorem}\label{th4.13}\cite{CoMe99} Let $A$ be a finite set of non-negative integers with corresponding polynomial $A(x)=\sum_{a\in A}x^a$ and let $S_A$ be the set of prime powers $s$ such that the cyclotomic polynomial $\Phi_s(x)$ divides $A(x)$. If $A$ tiles the integers, then (T1) holds. 
	\end{theorem}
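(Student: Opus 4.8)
The plan is to rephrase (T1) as a statement about $p$-adic valuations and then verify it one prime at a time by squeezing between two opposite inequalities. Recall that $A(1)=\#A$, that $\Phi_{p^k}(1)=p$ for a prime power $p^k$, and that $\Phi_n(1)=1$ whenever $n$ has at least two distinct prime divisors; writing $S_A^{(p)}$ for the set of powers of $p$ lying in $S_A$, the identity (T1) amounts to showing that $v_p(A(1))=\# S_A^{(p)}$ for every prime $p$ (and that $A(1)$ has no other prime divisors), since then $A(1)=\prod_p p^{v_p(A(1))}=\prod_p p^{\# S_A^{(p)}}=\prod_{s\in S_A}\Phi_s(1)$.

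The first inequality, $v_p(A(1))\ge \# S_A^{(p)}$, holds for any finite $A$ with no tiling hypothesis. The polynomials $\Phi_{p^k}$ with $p^k\in S_A$ are pairwise non-associate monic irreducibles each dividing $A(x)$, so by unique factorization in $\bz[x]$ their product divides $A(x)$; evaluating at $1$ shows that $p^{\# S_A^{(p)}}=\prod_{p^k\in S_A}\Phi_{p^k}(1)$ divides the nonzero integer $A(1)$. The same bound holds with $B$ in place of $A$.

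For the reverse direction I would bring in the tiling hypothesis through Proposition~\ref{pr3.2}. Since $A$ tiles $\bz$, the discussion preceding that proposition produces a finite set $B$ of non-negative integers and an $N\in\bn$ with $A\oplus B=\bz_N$; hence, by part (iv), $A(1)B(1)=N$ and for every factor $t>1$ of $N$ the cyclotomic polynomial $\Phi_t$ divides $A(x)$ or $B(x)$. Taking $t=p^j$ for $1\le j\le v_p(N)$ gives $\{p,p^2,\dots,p^{v_p(N)}\}\subseteq S_A\cup S_B$, so $\# S_A^{(p)}+\# S_B^{(p)}\ge v_p(N)$. Combining this with the two instances of the first inequality,
\[
v_p(N)=v_p(A(1))+v_p(B(1))\ge \# S_A^{(p)}+\# S_B^{(p)}\ge v_p(N),
\]
so every inequality is an equality and $v_p(A(1))=\# S_A^{(p)}$ for all $p$. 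Since $A(1)\mid N$, any prime dividing $A(1)$ divides $N$, and for $p\nmid N$ the first inequality forces $\# S_A^{(p)}=0$ as well; therefore $A(1)=\prod_{p\mid N}p^{\# S_A^{(p)}}=\prod_{s\in S_A}\Phi_s(1)$, which is (T1).

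I expect there to be no serious obstacle: the substantive input — that a tile has a periodic complement, equivalently that $A\oplus B=\bz_N$ for some $N$, together with the cyclotomic criterion — is exactly what Proposition~\ref{pr3.2} and the remarks around it provide, and the rest is the valuation bookkeeping above. The only points needing a little care are checking that the cyclotomic factors of $A$ and of $B$ contribute the claimed powers of $p$ to $A(1)$ and $B(1)$, and ruling out prime divisors of $A(1)$ outside those dividing $N$; both are handled by the first inequality.
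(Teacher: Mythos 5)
Your proof is correct: the valuation squeeze $v_p(N)=v_p(A(1))+v_p(B(1))\ge \#S_A^{(p)}+\#S_B^{(p)}\ge v_p(N)$ is exactly the Coven--Meyerowitz counting argument, and the ingredients you invoke (periodicity of the tiling complement and Proposition~\ref{pr3.2}(iv)) are the ones the paper makes available. The paper itself states Theorem~\ref{th4.13} as a cited result without proof, importing the key step as Lemma~\ref{lemco2.1}(i); your write-up is in effect a correct self-contained proof of that lemma in the case at hand, so it takes essentially the same route.
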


Also, they proved that tiles with  a cardinality that has only one or two prime factors satisfy the CM-property. 

\begin{theorem}\label{thcob2}\cite{CoMe99}
Let $A$ be a finite set of non-negative integers with corresponding polynomial $A(x)=\sum_{a\in A}x^a$ such that $\#A$ has at most two prime factors and $A$ tiles $\bz$. Then $A$ satisfies (T2), and therefore it has the CM-property. 
\end{theorem}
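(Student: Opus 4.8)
The plan is to first cut down to controlling a single cyclotomic polynomial, and then to reduce to a tiling of $\bz_{p^aq^b}$ where the Coven--Meyerowitz structure theory applies. Since $A$ tiles $\bz$, Theorem~\ref{th4.13} gives that $A$ satisfies (T1), so $\#A=A(1)=\prod_{s\in S_A}\Phi_s(1)$. Because $\Phi_{p^k}(1)=p$ for every prime power $p^k$, the primes occurring as bases of elements of $S_A$ are precisely the primes dividing $\#A$, and by hypothesis there are at most two of them. Hence in (T2) one can never choose $s_1,\dots,s_m\in S_A$ that are powers of $m\ge 3$ distinct primes, while for $m=1$ the conclusion $\Phi_{s_1}\mid A(x)$ is just the definition of $s_1\in S_A$. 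So it remains to treat $m=2$: for distinct primes $p,q$ and prime powers $p^\alpha,q^\beta\in S_A$, one must show $\Phi_{p^\alpha q^\beta}(x)\mid A(x)$.

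Next I would bring in a complement. By the discussion following Proposition~\ref{pr3.2}, $A$ has a tiling complement, and after enlarging the period we may assume $A\oplus B=\bz_N$ with $p^\alpha q^\beta\mid N$, so that $A(x)B(x)\equiv 1+x+\dots+x^{N-1}\pmod{x^N-1}$. For each $d\mid N$ with $d>1$, the irreducible $\Phi_d$ divides the right-hand side, hence $A(x)B(x)$, hence $A(x)$ or $B(x)$; and for a prime power $d=p^k\mid N$ it divides exactly one of them, because the factors $\Phi_{p^i}$ dividing $A$ (with $1\le i\le n_p$, where $p^{n_p}$ is the exact power of $p$ in $N$) contribute a factor $p^{a_p}$ to $A(1)$, similarly $p^{b_p}$ to $B(1)$, with $a_p+b_p\ge n_p$, while $A(1)B(1)=N$ forces $a_p+b_p=n_p$ and no overlap. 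In particular $\Phi_{p^\alpha}\mid A$ gives $\Phi_{p^\alpha}\nmid B$ and $\Phi_{q^\beta}\mid A$ gives $\Phi_{q^\beta}\nmid B$. Now suppose, for contradiction, that $\Phi_{p^\alpha q^\beta}\nmid A$; then $\Phi_{p^\alpha q^\beta}\mid B$, so $B$ is a tiling complement whose cyclotomic factors include $\Phi_{p^\alpha q^\beta}$ but neither $\Phi_{p^\alpha}$ nor $\Phi_{q^\beta}$.

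The core of the proof is to rule this out. I would localize to a period with no prime factors besides $p$ and $q$: writing $N=p^aq^bM$ with $\gcd(M,pq)=1$ and reducing the identity modulo $x^{p^aq^b}-1$ (a divisor of $x^N-1$) yields $\widetilde A(x)\widetilde B(x)\equiv M\,(1+x+\dots+x^{p^aq^b-1})\pmod{x^{p^aq^b}-1}$ for the non-negative ``folded'' polynomials $\widetilde A,\widetilde B$, which preserves divisibility by every $\Phi_d$ with $d\mid p^aq^b$; one must check that the harmless multiplicity $M$ causes no interference. On $\bz_{p^aq^b}$ one then runs the Coven--Meyerowitz analysis by induction on $a+b$, passing to the index-$p$ and index-$q$ subgroups via the substitutions $x\mapsto x^p$ and $x\mapsto x^q$ and tracking which of the factors $\Phi_{p^i}$, $\Phi_{q^j}$, $\Phi_{p^iq^j}$ land in $A$ versus in $B$; the conclusion is that the prime-power factors must interleave so that $\Phi_{p^\alpha}\mid A$ and $\Phi_{q^\beta}\mid A$ force $\Phi_{p^\alpha q^\beta}\mid A$, contradicting the previous paragraph. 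This inductive structural step is where essentially all the difficulty lies; the two reductions above and the final contradiction are routine bookkeeping. I would close by remarking that the analogous dichotomy for products of three or more prime powers is not known in general, which is precisely why one must assume that $\#A$ has at most two prime factors.
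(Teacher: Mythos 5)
The paper states this theorem as a citation of \cite{CoMe99} and gives no proof of its own, so your sketch has to stand on its own; as written, it does not. Your first two paragraphs are correct and match the standard reductions: (T1) from Theorem \ref{th4.13} together with $\Phi_{p^k}(1)=p$ shows that $S_A$ contains powers of at most two primes (this is Lemma \ref{lem4.1}), so only the case $m=2$ of (T2) is at issue, and Proposition \ref{pr3.2} plus Lemma \ref{lemco2.1} supply the complement $B$ and the disjointness of $S_A$ and $S_B$. But your third paragraph, which you yourself flag as ``where essentially all the difficulty lies,'' is not a proof: ``one then runs the Coven--Meyerowitz analysis by induction on $a+b$'' names the theorem you are trying to prove rather than proving it. The induction cannot even be set up without the key structural input, namely Sands' theorem on factorizations of cyclic groups whose order has at most two prime divisors --- the fact that, after the appropriate reductions, one of the two factors must be contained in $p\bz$ or in $q\bz$. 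This is what underlies Lemma \ref{lem4.5}(iii) and, combined with Lemma \ref{lemco2.5}, drives the recursion on the classes $\mathcal T_S$ from which (T2) is read off. That external ingredient is the entire content of Theorem B2 in \cite{CoMe99}, and your sketch never identifies it.

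There is also a concrete defect in the one reduction you do spell out. Folding the tiling identity modulo $x^{p^aq^b}-1$ gives $\widetilde A(x)\widetilde B(x)\equiv M\,(1+x+\dots+x^{p^aq^b-1})$, but $\widetilde A$ and $\widetilde B$ are no longer $0$--$1$ polynomials and the right-hand side describes a tiling \emph{with multiplicity} $M$; none of the structure theory you want to invoke applies to such objects, and the multiplicity is not ``harmless'' --- deciding which cyclotomic factors attach to which factor is exactly what breaks down. The reduction actually used in \cite{CoMe99} goes the other way: Lemma \ref{lemco2.5} passes to genuine sub-tilings $\cj A_i\oplus C/k=\bz$, so that every object in the induction remains an honest tiling. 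In sum, your proposal carries out the routine bookkeeping surrounding the theorem but leaves its core unproved and proposes a localization step that would not survive scrutiny.
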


	\begin{remark}\label{remcm}
	Note that the CM-tiling set $B$ or $C_S=B\oplus\lcm(S_A)\bz$ does not depend on $A$, it depends only on $S=S_A$. Also, the proof shows that if $A$ satisfies (T1) and (T2), then it has a universal tiling of period $\lcm(S_A)$, which is a tiling set for all the sets $A'$ with $S_{A'}=S_A$. 
	
	Note also that $B,C_S\subset p\bz$ for every prime factor $p\in S_A$ since $p$ is a factor of $N$ and every divisor $\Phi_s(x^{t(s)})$ of $B(x)$ is a polynomial in $x^p$. This is because either $s=p^{\alpha+1}$ with $\alpha\geq 1$, and then we use Proposition \ref{pr2.15}(iii), or $s$ is a power of a prime different than $p$ and then $t(s)$ is a multiple of $p$, being the largest factor of $\lcm(S_A)$ relatively prime to $s$.  
	\end{remark}
	
	Later, \L aba proved that sets with the CM-property are spectral. 
		\begin{theorem}\label{thlab}\cite{Lab02}
	Let $A$ be set of non-negative integers that satisfies the CM-property. Then $A$ is a spectral set. A spectrum for $A$ can be obtained as follows: consider the set $\Lambda_A$ of all numbers of the form 
	$$\sum_{s\in S_A}\frac{k_s}{s},$$
	where $k_s\in\{0,1,\dots,p-1\}$ if $s=p^{\alpha}\in S_A$, with $p$ prime. 
	
	\end{theorem}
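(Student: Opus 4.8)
The plan is to verify that $\Lambda_A$ is a spectrum directly from the matrix criterion recorded above for subsets of $\bz$: once we know $\#\Lambda_A=\#A$, it suffices to show that the columns of $\frac{1}{\sqrt{\#A}}\bigl(e^{2\pi i a\lambda}\bigr)_{a\in A,\lambda\in\Lambda_A}$ are orthonormal, i.e.\ that
$$\sum_{a\in A}e^{2\pi i a(\lambda-\lambda')}=A\!\left(e^{2\pi i(\lambda-\lambda')}\right)=0\qquad\text{for all distinct }\lambda,\lambda'\in\Lambda_A,$$
where $A(x)=\sum_{a\in A}x^a$. So the proof splits into a cardinality count and a root‑location statement, and the real content is the second.

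For the cardinality, I would first use that $\Phi_{p^{\alpha}}(1)=p$ for a prime $p$ and $\alpha\ge1$, so (T1) reads $\#A=A(1)=\prod_{s\in S_A}p_s$, where $p_s$ denotes the prime with $s=p_s^{\alpha}$. Then I would show that the map $(k_s)_{s\in S_A}\mapsto\sum_{s\in S_A}k_s/s$ from $\prod_{s\in S_A}\{0,\dots,p_s-1\}$ into $\br/\bz$ is injective, which yields $\#\Lambda_A=\prod_{s\in S_A}p_s=\#A$. For injectivity, suppose $\sum_{s}d_s/s\in\bz$ with $|d_s|<p_s$; group the terms by prime as $\sum_p\rho_p$ with $\rho_p=\sum_{p^{\alpha}\in S_A}d_{p^{\alpha}}/p^{\alpha}$. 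Since the reduced denominators of the $\rho_p$ are pairwise coprime prime powers, each $\rho_p$ must itself be an integer, and then a $p$‑adic valuation argument (equivalently, uniqueness of a base‑$p$ type expansion) forces every $d_{p^{\alpha}}=0$.

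For the root‑location statement, take distinct $\lambda,\lambda'\in\Lambda_A$, write $\lambda-\lambda'=\sum_{s\in S_A}d_s/s$ with $d_s=k_s-k_s'$, $|d_s|<p_s$, not all zero, and group by prime, $\lambda-\lambda'=\sum_p\rho_p$. The key lemma is: for each prime $p$, the reduced denominator $m_p$ of $\rho_p$ is either $1$ or equals $p^{\alpha_{j_0}}$, where $p^{\alpha_{j_0}}$ is the \emph{largest} $p$‑power in $S_A$ with $d_{p^{\alpha_{j_0}}}\ne0$; in particular $m_p\in S_A$ whenever $m_p>1$. To see this, let $p^{\alpha_r}$ be the largest $p$‑power in $S_A$ and write $\rho_p=\bigl(\sum_j d_{p^{\alpha_j}}p^{\alpha_r-\alpha_j}\bigr)/p^{\alpha_r}$; the $p$‑adic valuation of the numerator is exactly $\alpha_r-\alpha_{j_0}$, because the $j_0$‑term has valuation exactly $\alpha_r-\alpha_{j_0}$ (as $p\nmid d_{p^{\alpha_{j_0}}}$) while every other term has strictly larger valuation (those with $\alpha_j<\alpha_{j_0}$ obviously, those with $\alpha_j>\alpha_{j_0}$ because their coefficient vanishes by the choice of $j_0$). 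Hence $e^{2\pi i(\lambda-\lambda')}$ is a primitive $M$‑th root of unity with $M=\prod_{p:\,m_p>1}m_p$, a product of powers of distinct primes all belonging to $S_A$; then (T2) gives $\Phi_M(x)\mid A(x)$, so $A\!\left(e^{2\pi i(\lambda-\lambda')}\right)=0$. Since $\lambda\ne\lambda'$ we have $M>1$, so the argument applies. Combined with $\#\Lambda_A=\#A$, this makes the matrix unitary, so $\Lambda_A$ is a spectrum for $A$.

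The main obstacle is the denominator/valuation lemma for $\rho_p$: it is what collapses the contributions of the several $p$‑powers appearing in $S_A$ into a single prime power still in $S_A$, and thereby makes (T2) precisely the hypothesis one needs to kill the relevant off‑diagonal entries. Once that lemma is in place, both the injectivity of the parametrization of $\Lambda_A$ and the vanishing of $A$ at the required roots of unity amount to the same $p$‑adic bookkeeping, and the remainder is just the general matrix criterion from Proposition \ref{pr5.5}.
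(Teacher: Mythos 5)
Your argument is correct. Note that the paper does not prove this statement at all --- it is quoted from \L aba's paper \cite{Lab02} --- so there is no in-paper proof to compare against; your write-up is essentially the standard argument from that reference. The two pillars are exactly right: (T1) together with $\Phi_{p^\alpha}(1)=p$ gives $\#A=\prod_{s\in S_A}p_s$, and your $p$-adic valuation lemma (the reduced denominator of $\rho_p$ is $1$ or the largest $p$-power in $S_A$ carrying a nonzero digit, hence an element of $S_A$) does double duty, proving both the injectivity of the parametrization of $\Lambda_A$ modulo $\bz$ and the fact that $e^{2\pi i(\lambda-\lambda')}$ is a primitive $M$-th root of unity with $M$ a product of powers of distinct primes from $S_A$, so that (T2) forces $A\bigl(e^{2\pi i(\lambda-\lambda')}\bigr)=0$. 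Together with the square-matrix criterion of Proposition \ref{pr5.5}, this is a complete proof.
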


	\begin{definition}\label{deflabasp}
	With the notations as in Theorems \ref{thcm} and \ref{thlab}, for $N:=\lcm(S_A)$, we denote by $\textup{\L}_A:=\Lambda_A$ and we call $\textup{\L}_A$ the {\it \L aba spectrum} of $A$. 
	\end{definition}
	
		Combining the results of Coven-Meyerowitz and \L aba, Dutkay and Haussermann showed that if a set has the CM-property, then the tiling sets and the spectra are in a nice complementary relation. 

	\begin{theorem}\label{thdh}\cite{DH15}
	Let $A$ be a finite set of non-negative integers with the CM-property. Let $N=\lcm(S_A)$, and let $B=\textup{CM}(A)$ be its CM-tiling set. Then $B$ has the CM-property, and if $\textup{\L}_A$, $\textup{\L}_B$ are the corresponding \L aba spectra, then $(N\cdot\textup{\L}_A)\oplus(N\cdot\textup{\L}_B)=\bz_N$. 
	\end{theorem}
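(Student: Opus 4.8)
The plan is to first pin down the set $S_B$ of prime powers whose cyclotomic polynomial divides $B(x)$, then read off conditions (T1) and (T2) for $B$, and finally establish the complementarity relation prime-by-prime using the Chinese Remainder Theorem. For the first step, let $\mathcal{P}$ denote the set of all prime power divisors of $N$; since every element of $S_A$ divides $N=\lcm(S_A)$, we have $S_A\subseteq\mathcal{P}$. I will use the standard cyclotomic identity $\Phi_n(x^m)=\prod_{d\mid m}\Phi_{nd}(x)$, valid whenever $\gcd(m,n)=1$. Since $t(s)$ is by construction coprime to $s$, applying this identity to every factor of $B(x)=\prod_{s\in\mathcal{P}\setminus S_A}\Phi_s(x^{t(s)})$ gives
\begin{equation*}
B(x)=\prod_{s\in\mathcal{P}\setminus S_A}\ \prod_{d\mid t(s)}\Phi_{sd}(x),
\end{equation*}
so $\Phi_r(x)$ divides $B(x)$ exactly when $r=sd$ with $s=p^{\alpha}\in\mathcal{P}\setminus S_A$ and $d\mid t(s)$; since $\gcd(d,s)=1$, such an $r$ is a prime power only if $d=1$. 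Hence $S_B=\mathcal{P}\setminus S_A$, and $S_A$, $S_B$ partition $\mathcal{P}$.

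Next I would check (T1) and (T2) for $B$. Condition (T1) is immediate: setting $x=1$ in $B(x)=\prod_{s\in S_B}\Phi_s(x^{t(s)})$ gives $B(1)=\prod_{s\in S_B}\Phi_s(1)$, which is exactly (T1). For (T2), take $r_1,\dots,r_m\in S_B$ that are powers of distinct primes $p_1,\dots,p_m$. Each $r_i$ divides $N$, so $r_2\cdots r_m$ divides $N$ and is coprime to $p_1$, hence divides $t(r_1)$, the largest divisor of $N$ coprime to $r_1$. Taking $d=r_2\cdots r_m$ in the factorization $\Phi_{r_1}(x^{t(r_1)})=\prod_{d\mid t(r_1)}\Phi_{r_1d}(x)$ exhibits $\Phi_{r_1r_2\cdots r_m}(x)$ as a divisor of $B(x)$. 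Thus $B$ has the CM-property.

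For the complementarity relation, write $N=\prod_{p\mid N}p^{a_p}$ and decompose $\bz_N\cong\prod_{p\mid N}\bz_{p^{a_p}}$ by the Chinese Remainder Theorem. Fix a prime $p\mid N$ and let $u_p$ be the residue of $N/p^{a_p}$ modulo $p^{a_p}$, which is a unit. A typical element $\sum_{s\in S_A}k_s(N/s)$ of $N\cdot\textup{\L}_A$ reduces modulo $p^{a_p}$ to $u_p\sum_{j:\,p^j\in S_A}k_{p^j}p^{a_p-j}$, since the contributions of prime powers of other primes vanish mod $p^{a_p}$. As the coefficients $k_s$ attached to different primes range freely and independently, $N\cdot\textup{\L}_A$ corresponds under the isomorphism to $\prod_{p\mid N}u_pM_A^{(p)}$, where $M_A^{(p)}\subseteq\bz_{p^{a_p}}$ is the set of residues whose base-$p$ digits are supported on the positions $\{a_p-j:p^j\in S_A\}$ (each such digit free in $\{0,\dots,p-1\}$); and likewise for $B$. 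Because $S_A$ and $S_B$ partition $\mathcal{P}$, for each $p$ the exponent sets $\{j:p^j\in S_A\}$ and $\{j:p^j\in S_B\}$ partition $\{1,\dots,a_p\}$, so the corresponding digit-position sets partition $\{0,\dots,a_p-1\}$. Uniqueness of base-$p$ expansions then gives $M_A^{(p)}\oplus M_B^{(p)}=\bz_{p^{a_p}}$; multiplying by the unit $u_p$ preserves this, and reassembling over all $p$ through the Chinese Remainder isomorphism yields $(N\cdot\textup{\L}_A)\oplus(N\cdot\textup{\L}_B)=\bz_N$.

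The main obstacle I anticipate is the bookkeeping in the first step: one must verify that no prime-power cyclotomic polynomial other than the $\Phi_s$ with $s\in\mathcal{P}\setminus S_A$ occurs in $B(x)$, which is precisely what pins down $S_B$ and makes the partition $\mathcal{P}=S_A\sqcup S_B$ work. Once that identity is in hand, (T1) is automatic, (T2) is a one-line divisibility argument, and the complementarity relation reduces, prime by prime, to the triviality that a length-$a_p$ base-$p$ string splits uniquely once its digit positions are partitioned.
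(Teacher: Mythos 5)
The paper itself gives no proof of Theorem \ref{thdh}: it is imported from \cite{DH15} as a black box, so there is no in-text argument to measure you against. Judged on its own, your proof is correct and complete. The key step --- identifying $S_B=\mathcal{P}\setminus S_A$ by expanding each factor via $\Phi_s(x^{t(s)})=\prod_{d\mid t(s)}\Phi_{sd}(x)$ (Proposition \ref{pr2.15}(vi), legitimate since $\gcd(s,t(s))=1$) and observing that $sd$ with $\gcd(d,s)=1$ is a prime power only when $d=1$ --- is exactly what is needed, and it makes (T1) immediate and reduces (T2) to the one-line divisibility $r_2\cdots r_m\mid t(r_1)$, which holds because $t(r_1)=N/p_1^{a_{p_1}}$ is divisible by every divisor of $N$ coprime to $p_1$. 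Your complementarity argument is also sound: under the isomorphism of Proposition \ref{pr5.6.1}, the cross-prime terms $k_s N/s$ with $s=q^\beta$, $q\neq p$, do vanish modulo $p^{a_p}$, the surviving terms are $u_p\sum_j k_{p^j}p^{a_p-j}$ with $u_p$ a unit, and the digit positions $\{a_p-j: p^j\in S_A\}$ and $\{a_p-j: p^j\in S_B\}$ partition $\{0,\dots,a_p-1\}$ precisely because $S_A\sqcup S_B=\mathcal{P}$; uniqueness of base-$p$ expansion then gives the direct sum prime by prime, and the product of componentwise direct sums is a direct sum. This is the natural route (and the one taken in \cite{DH15} in spirit): all the ingredients you invoke are available in the paper's appendix, and nothing essential is left unjustified.
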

	
	Many examples of tiles are found in the literature. Many fewer spectral sets are known. In this paper, we gather some of the examples of tiling sets in the literature and show that they have the CM-property and explicitly describe the tiling sets and the spectra. In Section 2, we describe the tiles with cardinality of a prime power. In Section 3, we describe Szab{\'o}'s examples and show that they have the CM-property and describe the tiling sets and spectra. In Section 4, we present some general constructions of spectral sets, tiling sets, and sets with the CM-property.
	
	\section{One prime power}

	\begin{theorem}\label{th2.1}
	Let $A$ be a set of non-negative integers with cardinality $p^n$, where $p$ is prime and $n\in\bn$. Then the set $A$ tiles the integers if and only if there exist integers $1\leq\alpha_1<\dots<\alpha_n$ and for each $0\leq k\leq n$, and each $1\leq i_1,\dots,i_{k-1}\leq p-1$ there exists a complete set of representatives modulo $p$, $\{a_{i_1,\dots,i_{k-1},i_k} :0\leq i_k\leq p-1\}$, $a_{i_1,\dots,i_{k-1},0}=0$, such that the set $A$ is congruent modulo $p^{\alpha_n}$ to the set 
	
	\begin{equation}
	A'=\left\{\sum_{k=1}^n p^{\alpha_k-1}a_{i_1,\dots,i_k} : 0\leq i_1,\dots,i_n\leq n-1\right\}.
	\label{eq2.1.1}
	\end{equation}
	
	In this case $S_A=\{p^{\alpha_1},\dots,p^{\alpha_n}\}$, the CM-tiling set is 
	\begin{equation}
	B=\textup{CM}(A)=\left\{\sum_{\overset{j=0,\dots,\alpha_n-1}{j\neq \alpha_1-1,\dots,\alpha_n-1}}b_jp^j :0\leq b_j\leq p-1, 0\leq j\leq \alpha_n-1,j\neq \alpha_1-1,\dots,\alpha_n-1\right\}.
	\label{eq2.1.2}
	\end{equation}
	The \L aba spectra of $A$ and $B$ are, respectively
	\begin{equation}
	\textup{\L}_A=\left\{ \sum_{i=1}^n\frac{k_i}{p^{\alpha_i}} : 0\leq k_i\leq p-1, 1\leq i\leq n
	\right\}
	\label{eq2.1.3}
	\end{equation}
	\begin{equation}
	\textup{\L}_B=\left\{\sum_{\overset{j=1,\dots,\alpha_n}{j\neq \alpha_1,\dots,\alpha_n}} \frac{k_j}{p^j} : 0\leq k_j\leq p-1, 1\leq j\leq \alpha_n, j\neq \alpha_1,\dots,\alpha_n\right\}
	\label{eq2.1.4}
	\end{equation}
	\end{theorem}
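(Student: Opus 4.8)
The plan is to establish the cyclotomic data $S_A=\{p^{\alpha_1},\dots,p^{\alpha_n}\}$ first, then prove the two directions of the characterization, and finally read off the three explicit formulas; I expect the forward implication (tile $\Rightarrow$ tree structure) to be the only genuinely difficult step.

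\emph{Step 1 (setup).} Whether a finite set of integers tiles $\bz$, and which cyclotomic polynomials divide its mask polynomial, depend only on its residues modulo any chosen period; moreover, since $\#A=p^{n}$ has a single prime factor, Theorem~\ref{thcob2} guarantees that every tile $A$ of this cardinality automatically satisfies (T1) and (T2). From (T1), $A(1)=\prod_{s\in S_A}\Phi_s(1)=p^{n}$; since $\Phi_{q^{\beta}}(1)=q$ for every prime power $q^{\beta}$, this forces each element of $S_A$ to be a power of $p$ and their number to be exactly $n$, i.e.\ $S_A=\{p^{\alpha_1},\dots,p^{\alpha_n}\}$ with $1\le\alpha_1<\dots<\alpha_n$, so $N:=\lcm(S_A)=p^{\alpha_n}$ and, by Theorem~\ref{thcm}, $A$ tiles with period $p^{\alpha_n}$. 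I will also use repeatedly that (T2) is \emph{vacuous} here, all elements of $S_A$ being powers of the one prime $p$.

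\emph{Step 2 (the structure, by induction on $n$).} For $n=0$ there is nothing to prove. For $n\ge 1$, after normalizing so that $0\in A$, pass to $\bar A:=A\bmod p^{\alpha_n}\subseteq[0,p^{\alpha_n})$, which has $p^{n}$ elements and mask polynomial of degree $<p^{\alpha_n}$ divisible by $\Phi_{p^{\alpha_n}}(x)=1+x^{p^{\alpha_n-1}}+\dots+x^{(p-1)p^{\alpha_n-1}}$. Writing $\bar A(x)=\Phi_{p^{\alpha_n}}(x)R(x)$, the degree bound forces $\deg R<p^{\alpha_n-1}$, and since $\bar A$ has $0/1$ coefficients this is equivalent to
\[ \bar A=R'\oplus\{0,\,p^{\alpha_n-1},\,2p^{\alpha_n-1},\,\dots,\,(p-1)p^{\alpha_n-1}\},\qquad R':=\bar A\cap[0,p^{\alpha_n-1}). \]
Using that $\Phi_s$ and $\Phi_{p^{\alpha_n}}$ are coprime for $s\ne p^{\alpha_n}$ and that $\deg R'<\deg\Phi_{p^{\alpha_n}}$, one checks that $\#R'=p^{n-1}$, $0\in R'$, $S_{R'}=\{p^{\alpha_1},\dots,p^{\alpha_{n-1}}\}$ and $R'(1)=p^{n-1}$, so $R'$ again satisfies (T1) and (T2) and, by Theorem~\ref{thcm}, is a tile. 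Applying the induction hypothesis to $R'$, and grafting on an $n$th coordinate whose coefficients are $a_{i_1\dots i_{n-1}i_n}:=i_n$ (supplied by the summand $\{0,p^{\alpha_n-1},\dots,(p-1)p^{\alpha_n-1}\}$), will yield the form \eqref{eq2.1.1} for $\bar A$, hence for $A$ modulo $p^{\alpha_n}$ — modulo the bookkeeping described next.

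\emph{Step 3 (the hard part).} The delicate point in Step 2 is that the induction hypothesis describes $R'$ only up to its own period $p^{\alpha_{n-1}}$, whereas $R'=\bar A\cap[0,p^{\alpha_n-1})$ may genuinely stick out past $[0,p^{\alpha_{n-1}})$ when $\alpha_{n-1}<\alpha_n-1$. One must show that the overshoot of each element of $R'$ (a multiple of $p^{\alpha_{n-1}}$) can be absorbed into the top-level coefficient $a_{i_1\dots i_{n-1}}$ of the tree representing $R'$ — which preserves its being a complete residue system modulo $p$ in $i_{n-1}$ — while keeping all the required zero entries $a_{i_1\dots i_{k-1},0}=0$. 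This reduces to controlling, level by level, how the successive cyclotomic peelings interact with the base-$p$ digit positions $\alpha_1-1,\dots,\alpha_n-1$, and it is where essentially all the work lies; the cyclotomic manipulations around it are short.

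\emph{Step 4 (the converse and the formulas).} For the converse, given $A\equiv A'\pmod{p^{\alpha_n}}$ with $A'$ as in \eqref{eq2.1.1}, first check, by reducing successively modulo $p^{\alpha_1},p^{\alpha_2},\dots$ and using the complete-residue-system property in the last index, that the $p^{n}$ index tuples give $p^{n}$ distinct residues modulo $p^{\alpha_n}$; then evaluate $A'(x)$ at a primitive $p^{\alpha_j}$-th root of unity $\zeta$: the factors with $k>j$ contribute $1$ (as $p^{\alpha_j}\mid p^{\alpha_k-1}$), and summing over the last index at level $j$ gives $\sum_{i_j}\omega^{a_{i_1\dots i_j}}=0$ with $\omega=\zeta^{p^{\alpha_j-1}}$ a primitive $p$-th root, so $\Phi_{p^{\alpha_j}}\mid A'(x)$ for all $j$; together with $A'(1)=p^{n}$ this forces $S_{A'}=\{p^{\alpha_1},\dots,p^{\alpha_n}\}$ and (T1), and (T2) is vacuous, so $A'$, hence $A$, tiles by Theorem~\ref{thcm}. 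Finally, with $S_A$ known, \eqref{eq2.1.2} follows from the recipe in Theorem~\ref{thcm}: the prime-power divisors of $N=p^{\alpha_n}$ not in $S_A$ are the $p^{j}$ with $1\le j\le\alpha_n$, $j\notin\{\alpha_1,\dots,\alpha_n\}$, each with $t(p^{j})=1$, so $B(x)=\prod_{j}\Phi_{p^{j}}(x)$, and expanding this product — the exponents $p^{j-1}$ being distinct powers of $p$ with digits in $\{0,\dots,p-1\}$ — gives \eqref{eq2.1.2} after reindexing $j\mapsto j-1$; reading $S_B=\{p^{j}:1\le j\le\alpha_n,\ j\notin\{\alpha_1,\dots,\alpha_n\}\}$ off the same product, \eqref{eq2.1.3} and \eqref{eq2.1.4} are exactly the \L aba spectra of $A$ and $B$ from Theorem~\ref{thlab} applied to $S_A$ and $S_B$.
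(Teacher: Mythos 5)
Your Step 3 is a genuine gap, and you say so yourself: the forward direction is exactly the problem of showing that the overshoot of $R'=\bar A\cap[0,p^{\alpha_n-1})$ past $[0,p^{\alpha_{n-1}})$ can be absorbed into the tree coefficients \emph{while preserving the normalization} $a_{i_1,\dots,i_{k-1},0}=0$, and you do not carry this out. The difficulty is real: writing $r=\sum_{k\le n-1}p^{\alpha_k-1}a_{i_1\dots i_k}+p^{\alpha_{n-1}}m_r$ and replacing $a_{i_1\dots i_{n-1}}$ by $a_{i_1\dots i_{n-1}}+pm_r$ does preserve the complete-residue-system property, but it destroys the zero entry at $i_{n-1}=0$ unless $m_{r_0}=0$ for the corresponding element, and the correction $p^{\alpha_{n-1}-1}\cdot pm_{r_0}=p^{\alpha_{n-1}}m_{r_0}$ cannot in general be pushed into a higher level (it is divisible by $p^{\alpha_{n-1}}$ but need not be divisible by $p^{\alpha_n}$, and absorbing it at level $n$ would require dividing by $p^{\alpha_n-\alpha_{n-1}-1}$). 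So the induction as set up does not close without a strengthened inductive statement that controls where the elements of $R'$ actually sit. The paper avoids this by inducting in the opposite direction: it peels off the \emph{smallest} prime power $p^{\alpha_1}$ using the Coven--Meyerowitz recursion for $\mathcal T_S$ (Lemma \ref{lem4.4}, via Lemma \ref{lem4.3}), and the inductive statement (Theorem \ref{th5.14}) carries the containment constraints \eqref{eq5.14.1} along explicitly, which is precisely the bookkeeping your sketch defers.

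There is a second, smaller gap in your converse. From $\Phi_{p^{\alpha_j}}\mid A'(x)$ for all $j$ and $A'(1)=p^n$ you cannot yet conclude $S_{A'}=\{p^{\alpha_1},\dots,p^{\alpha_n}\}$ and hence (T1): nothing so far rules out an extra prime-power cyclotomic divisor, and (T1) is a hypothesis of Theorem \ref{thcm}, not something you may assume before knowing $A'$ tiles (Theorem \ref{th4.13} runs the other way). The fix is easy and is essentially what the paper does: either verify directly that $A'\oplus B=\bz_{p^{\alpha_n}}$ with $B$ as in \eqref{eq2.1.2} via Lemma \ref{lema1} and a base-$p$ digit argument, or invoke Proposition \ref{pr3.2}(iv) with this $B$, since every $\Phi_{p^j}$ with $1\le j\le\alpha_n$ divides $A'(x)$ or $B(x)$ and $A'(1)B(1)=p^{\alpha_n}$; only after tiling is established does (T1) pin down $S_{A'}$. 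Your Step 1, the cyclotomic computation in Step 4 showing $\Phi_{p^{\alpha_j}}\mid A'$, and the derivations of \eqref{eq2.1.2}--\eqref{eq2.1.4} are fine and match the paper.
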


	\begin{remark}
	Let us explain a bit more the structure of the set $A'$. Think of the base $p$ decomposition of a number. For the set $A'$, we only use the digits corresponding to positions $\alpha_1-1, \alpha_2-1, \dots, \alpha_n -1$.  The rest of the digits are 0. In position $\alpha_1 -1$ we use a complete set of representatives modulo $p$, $\{a_{i_1} : 0\leq i_1\leq p-1\}$ with $a_0=0$. Once the first digit $a_{i_1}$ is chosen for the digit in position $\alpha_1-1$, we use another complete set of representatives modulo $p$, $\{a_{i_1,i_2}: 0\leq i_2\leq p-1\}$, with $a_{i_1,0}=0$. Note that, this complete set of representatives is allowed to be different for different choices of $i_1$. 
	
	For $1\leq k\leq n$, once the digits $a_{i_1},a_{i_1,i_2}, \dots, a_{i_1,\dots,i_{k-1}}$ have been chosen for positions $\alpha_1-1,\alpha_2-1,\dots,\alpha_{k-1}-1$ respectively, for the digit in position $\alpha_k-1$ we pick a complete set of representatives, $\{a_{i_1,\dots,i_k} :0\leq i_k\leq p-1\}$, with $a_{i_1,\dots,i_{k-1},0}=0$. 
	
	\end{remark}
	
	We will need some results from \cite{CoMe99}.
	
	\begin{definition}\label{def4.2}
	Let $S$ be a set of powers of at most two primes. Define $\mathcal T_S$ to be the collection of all subsets $A$ of $\{0,1,\dots, \lcm(S)-1\}$ which tile the integers and satisfy $\min(A)=0$ and $S_A=S$. Note that $\mathcal T_\ty=\{0\}$ because $\lcm(\ty)=1$. 	
	\end{definition}
	
	\begin{lemma}\label{lem4.3}
	Let $S$ be a set of powers of at most two primes. 
	A finite set $A'$ with $\min(A')=0$ and $S_{A'}=S$ tiles the integers if and only if $A'$ is congruent modulo $\lcm(S)$ to a member of $\mathcal T_S$. 
	\end{lemma}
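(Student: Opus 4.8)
The plan is to derive the lemma from Proposition~\ref{pr3.2}, which turns tilings of $\bz$ into tilings of a finite cyclic group, combined with a reduction modulo $N:=\lcm(S)$. The preliminary observation that makes this work is that a tile $A'$ with $S_{A'}=S$ can always be taken to tile with period \emph{exactly} $N$: if $A'$ tiles $\bz$ then Theorem~\ref{th4.13} gives $A'(1)=\prod_{s\in S}\Phi_s(1)$, so $\#A'=A'(1)$ is a product of powers of the (at most two) primes occurring in $S$; hence $\#A'$ has at most two prime factors, Theorem~\ref{thcob2} applies, $A'$ has the CM-property, and Theorem~\ref{thcm} shows $A'$ tiles with period $\lcm(S_{A'})=N$.

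For the forward implication, assume $A'$ tiles $\bz$, $\min(A')=0$ and $S_{A'}=S$. By the preceding remark and Proposition~\ref{pr3.2} there is a set $B\subseteq\{0,\dots,N-1\}$ with $A'\oplus B=\bz_N$. In particular the reduction map $A'\to\bz_N$ is injective, so letting $A\subseteq\{0,\dots,N-1\}$ be the set of residues modulo $N$ of the elements of $A'$, we get $\#A=\#A'$, $0\in A$, and $A\oplus B=\bz_N$; Proposition~\ref{pr3.2} then shows $A$ tiles $\bz$. Since $A'(x)\equiv A(x)\pmod{x^N-1}$ and every $s\in S$ divides $N$, for each prime power $s$ dividing $N$ we have $\Phi_s(x)\mid A(x)$ if and only if $\Phi_s(x)\mid A'(x)$; this gives $S\subseteq S_A$. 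For the reverse inclusion, apply the preliminary remark to $A$ (which tiles $\bz$ and whose cardinality $\#A=\#A'$ again has at most two prime factors): $A$ has the CM-property, so $A(1)=\prod_{s\in S_A}\Phi_s(1)$, while $A(1)=\#A=\#A'=\prod_{s\in S}\Phi_s(1)$. As every factor $\Phi_s(1)$ with $s$ a prime power equals the underlying prime, hence is at least $2$, equality of the two products together with $S\subseteq S_A$ forces $S_A=S$. Thus $A\in\mathcal{T}_S$ and $A'$ is congruent to $A$ modulo $N$.

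For the converse, assume $A'$ is congruent modulo $N$ to some $A\in\mathcal{T}_S$. Then $A$ tiles $\bz$, and since $\#A$ has at most two prime factors, Theorems~\ref{thcob2} and~\ref{thcm} show $A$ tiles with period $N=\lcm(S)$; by Proposition~\ref{pr3.2} there is $B\subseteq\{0,\dots,N-1\}$ with $A\oplus B=\bz_N$. Because $A'$ and $A$ reduce to the same subset of $\bz_N$ and $A'$ reduces injectively, $A'\oplus B=\bz_N$, and a final application of Proposition~\ref{pr3.2} shows $A'$ tiles $\bz$.

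The one genuinely substantive step, and the unique place where the two-prime restriction on $S$ is used, is the identification $S_A=S$ after reduction: the inclusion $S\subseteq S_A$ is a routine cyclotomic-divisibility argument, while $S_A\subseteq S$ comes from the (T1) cardinality identity, which is available only because $\#A$ has at most two prime factors. The edge case $S=\emptyset$ is immediate: then $N=1$, $\mathcal{T}_\emptyset=\{\{0\}\}$, and (T1) forces any tile $A'$ with $S_{A'}=\emptyset$ and $\min(A')=0$ to equal $\{0\}$.
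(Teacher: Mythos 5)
Your proof is correct and follows essentially the same route as the paper: use the two-prime hypothesis to get the CM-property (hence a tiling of period $\lcm(S)$), reduce modulo $\lcm(S)$, and transfer the tiling back and forth via Proposition~\ref{pr3.2}. The only cosmetic difference is that you identify $S_A=S$ directly from the congruence $A'(x)\equiv A(x)\pmod{x^N-1}$ together with the (T1) cardinality identity, whereas the paper cites Lemma~\ref{lemco2.1} to recognize both $S_A$ and $S_{A'}$ as the complement of $S_B$; both arguments are valid.
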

	
	\begin{proof}
	Let $A$ be an element of $\mathcal T_S$ and $A'\equiv A(\mod \lcm(S))$. Let $N=\lcm(S)$. Since $A'\in\mathcal T_S$, there exists a set $B$ such that $A'\oplus B=\bz_N$. Then, since $A\equiv A' (\mod N)$, it follows that $A\oplus B=\bz_N$. 
	
	Conversely, if $A'$ tiles the integers and $S_A=S$, then by Lemma \ref{lem4.1}, $\#A'$ has at most two prime factors, so it has the CM-property, by Theorem \ref{thcob2}. Therefore, it has a tiling set of period $\lcm(S)$, $B\oplus \lcm(S)\bz$, by Remark \ref{remcm}. Let $A$ be the set obtained from $A'$ by reducing modulo $\lcm(S)$. Then $\min(A)=0$ and $A\subset\{0,1,\dots,\lcm(S)-1\}$. Also, $A$ has the same tiling set $B\oplus \lcm(S)\bz$. Then, by Lemma \ref{lemco2.1}, $S_{A'}=S_A$ as the complement of $S_B$ in the set of all prime power factors of $\lcm(S)$. 
	\end{proof}
		
\begin{lemma}\label{lem4.1}
	Let $A$ be a finite set of non-negative integers which is a tile. Then $\#A$ has at most two prime factors if and only if $S_A$ consists of powers of at most two primes. 
	\end{lemma}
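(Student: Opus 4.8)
The plan is to reduce the equivalence to the Coven--Meyerowitz condition (T1), which every tile satisfies by Theorem \ref{th4.13}. The key arithmetic facts are that $A(1)=\#A$ and that, for a prime power $s=p^{\alpha}$ with $\alpha\geq1$, one has $\Phi_{p^{\alpha}}(1)=p$; the latter holds because $\Phi_{p^{\alpha}}(x)=1+x^{p^{\alpha-1}}+x^{2p^{\alpha-1}}+\dots+x^{(p-1)p^{\alpha-1}}$ is a sum of exactly $p$ monomials.

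First I would invoke Theorem \ref{th4.13}: since $A$ is a tile, (T1) holds, so
$$\#A=A(1)=\prod_{s\in S_A}\Phi_s(1)=\prod_{p}p^{m_p},$$
where $m_p$ denotes the number of exponents $\alpha\geq1$ with $p^{\alpha}\in S_A$ (so $m_p=0$ for all but finitely many primes $p$). Reading off prime divisors, a prime $p$ divides $\#A$ if and only if some power of $p$ belongs to $S_A$; equivalently, the set of distinct primes dividing $\#A$ equals the set of distinct primes appearing as bases of the prime powers in $S_A$.

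From this identity both implications follow at once. If $S_A$ consists of powers of at most two primes, then $m_p=0$ except for (at most) those two primes, so $\#A$ has at most two prime factors. Conversely, if $\#A$ has at most two prime factors, then $m_p=0$ for every prime $p$ outside that set, so $S_A$ contains no power of a third prime and hence consists of powers of at most two primes. The degenerate case $S_A=\ty$, where $\#A=1$, is consistent with both sides.

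I do not expect any genuine obstacle here: all the substance is contained in Theorem \ref{th4.13}, and what remains is the elementary evaluation $\Phi_{p^{\alpha}}(1)=p$ together with bookkeeping of prime factors. The one point that needs care is to interpret "at most two prime factors" as "at most two \emph{distinct} prime factors" on both sides of the equivalence, so that the correspondence between $\{p:p\mid\#A\}$ and $\{p:p^{\alpha}\in S_A\text{ for some }\alpha\geq1\}$ is exactly as described above.
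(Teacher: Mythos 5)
Your proposal is correct and follows exactly the paper's own argument: invoke Theorem \ref{th4.13} to get (T1), evaluate $\Phi_{p^{\alpha}}(1)=p$ via Proposition \ref{pr2.15}(iv), and conclude that the primes dividing $\#A$ are precisely the bases of the prime powers in $S_A$. Nothing further is needed.
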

	
	\begin{proof}
	Since $A$ is a tile, it satisfies the (T1) property, by Theorem \ref{th4.13}. So, using Proposition \ref{pr2.15}(iv),
	$$\#A=\prod_{s\in S_A}\Phi_s(1)=\prod_{p^\alpha\in S_A}p.$$
	Thus $\#A$ has at most two prime factors if and only if $S_A$ consists of powers of at most two primes. 
	\end{proof}

		\begin{lemma}\cite{CoMe99}\label{lem4.4}
	Suppose $S$ contains powers of only one prime $p$. Let $\cj S=\{p^\alpha : p^{\alpha+1}\in S, \alpha\geq 1\}$. 
	\begin{enumerate}
		\item If $p\not\in S$ then $$\mathcal T_S=\{p\cj A : \cj A\in \mathcal T_{\cj S}\}.$$
		\item If $p\in S$ then 
		$$\mathcal T_{S}=\left\{\cup_{i=0}^{p-1}(\{a_i\}\oplus p\cj A_i) : \cj A_i\in \mathcal T_{\cj S}, a_0=0,\{a_0,a_1,\dots, a_{p-1}\}\mbox{ a complete }\right.$$
		$$\left.\mbox{set of representatives modulo $p$ and every }\{a_i\}\oplus p \cj A_i\subset \{0,1,\dots,\lcm(S)-1\}\right\}.$$
	
	\end{enumerate}
	\end{lemma}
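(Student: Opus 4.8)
The plan is to prove each of (i) and (ii) by establishing the two set inclusions directly (no induction on $N:=\lcm(S)$, a power of $p$, is needed; the recursion is only in the way $\mathcal T_{\cj S}$ enters, and the case $S=\ty$ is trivial). The tools I would use are: Theorem \ref{th4.13}, so that any $A\in\mathcal T_S$ has $\#A=\prod_{s\in S}\Phi_s(1)=p^{|S|}$ and $\prod_{s\in S}\Phi_s(x)\mid A(x)$; Theorems \ref{thcob2}, \ref{thcm} and Remark \ref{remcm}, to produce, for a set of prime-power cardinality, its CM tiling set and to know that all members of $\mathcal T_{\cj S}$ share one tiling set of period $\lcm(\cj S)$; and two elementary cyclotomic facts: $\Phi_{p^{\alpha}}(x)=\Phi_p\!\left(x^{p^{\alpha-1}}\right)$, and, for $\zeta$ a primitive $p^{j}$-th root of unity with $j\ge 2$, that $\{1,\zeta,\dots,\zeta^{p-1}\}$ is a basis of $\mathbb Q(\zeta)$ over $\mathbb Q(\zeta^{p})$.

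For the reverse inclusions ("$\supseteq$") I would just carry out the construction. If $p\notin S$ and $\cj A\in\mathcal T_{\cj S}$, set $A=p\cj A$; then $A(x)=\cj A(x^{p})$, so $\#A$, $\min A$, the support bound and $S_A$ are read off from $\cj A$ by evaluating at $p^{j}$-th roots of unity (using that every element of $S$ has exponent $\ge 2$, so that $p^{j}\in S\Leftrightarrow p^{j-1}\in\cj S$), and $A$ tiles since $\cj A\oplus\cj C=\bz$ gives $A\oplus(p\cj C\oplus\{0,\dots,p-1\})=\bz$. If $p\in S$ and the data $a_0=0,\dots,a_{p-1}$ and $\cj A_0,\dots,\cj A_{p-1}$ are given, set $A=\bigcup_i(\{a_i\}\oplus p\cj A_i)$; the union is disjoint because the $a_i$ run over all residues mod $p$, the cardinality and $\min$ are immediate, $A$ tiles using the common tiling set $\cj C$ of the $\cj A_i$ (Remark \ref{remcm}), and $S_A=S$ follows by evaluating $A(x)=\sum_i x^{a_i}\cj A_i(x^{p})$ at a primitive $p$-th root (which gives $\Phi_p\mid A$ because $\sum_i\zeta^{a_i}=0$) and at a primitive $p^{j}$-th root for $j\ge 2$ (where the $\mathbb Q(\zeta^{p})$-basis property reduces $A(\zeta)=0$ to "$\cj A_i(\zeta^{p})=0$ for all $i$").

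For the forward inclusions ("$\subseteq$"), fix $A\in\mathcal T_S$, so $\#A=p^{k}$ with $k=|S|$ and $h(x):=A(x)/\prod_{s\in S}\Phi_s(x)\in\bz[x]$ has $h(1)=1$. When $p\in S$ I would split $A$ into its residue classes $A^{(r)}$ mod $p$: since $B=\operatorname{CM}(A)$ has no $\Phi_p$-factor (as $p\in S_A$) and its remaining factors are polynomials in $x^{p}$, we get $B\subseteq p\bz$, and restricting $A\oplus B=\bz_N$ to each residue class shows every $A^{(r)}$ is nonempty with $\#A^{(r)}=p^{k-1}$ and that $\cj A_c:=(A^{(c)}-\min A^{(c)})/p$ tiles a group of order $N/p$; then (T1), Lemma \ref{lem4.1} and the $\mathbb Q(\zeta^{p})$-basis computation applied to $A(x)=\sum_c x^{a_c}\cj A_c(x^{p})$ give $\cj S=\bigcap_c S_{\cj A_c}$ with all $|S_{\cj A_c}|=|\cj S|$, hence $S_{\cj A_c}=\cj S$ and $\cj A_c\in\mathcal T_{\cj S}$. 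When $p\notin S$, $\prod_{s\in S}\Phi_s(x)=P(x^{p})$ where $P(y)=\prod_{s\in\cj S}\Phi_s(y)$ is monic with $P(1)=p^{k}$; writing $h(x)=\sum_{r=0}^{p-1}x^{r}h_r(x^{p})$ yields $A(x)=\sum_r x^{r}Q_r(x^{p})$ with $Q_r=P\,h_r\in\bz[x]$, and since $A$ is $0$-$1$ each $Q_r$ is a $0$-$1$ polynomial divisible by the monic $P$, so every nonzero $Q_r$ has $Q_r(1)\ge P(1)=p^{k}$; as $\sum_r Q_r(1)=p^{k}$, exactly one $Q_r$ is nonzero, and it must be $Q_0$ because $0\in A$. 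Thus $A\subseteq p\bz$, $A=p\cj A$ with $\cj A=Q_0$, and the root-of-unity computation together with restricting a tiling of $A$ to $p\bz$ gives $\cj A\in\mathcal T_{\cj S}$.

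The only step that is not bookkeeping is this last one — forcing $A\subseteq p\bz$ in the case $p\notin S$ — and I expect it to be the main obstacle. The device that makes it work is dividing $A(x)$ by $\prod_{s\in S}\Phi_s(x)$, which in that case is a monic polynomial in $x^{p}$ whose value at $1$ already equals $\#A$; this forces each residue-class piece of $A$ to be either empty or all of $A$.
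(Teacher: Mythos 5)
The paper does not actually prove this lemma --- it is quoted verbatim from \cite{CoMe99} --- so your proposal can only be measured against the Coven--Meyerowitz machinery the paper does reproduce. Your argument is correct, and in case (ii) it is essentially the standard one: $B=\operatorname{CM}(A)\subset p\bz$ (Remark \ref{remcm}), so splitting $A$ into residue classes modulo $p$ is exactly Lemma \ref{lemco2.5}, which already hands you the tiling of the pieces, the equidistribution $\#\cj A_c=\#A/p$, and $S_{\cj A_0}=\dots=S_{\cj A_{p-1}}$; your $\Q(\zeta^p)$-basis computation plus the (T1) count then pins down $S_{\cj A_c}=\cj S$. The genuinely different (and nicest) part is your case (i): forcing $A\subseteq p\bz$ by writing $\prod_{s\in S}\Phi_s(x)=P(x^p)$ with $P$ monic and $P(1)=p^{|S|}=\#A$, so that each residue-class piece satisfies $A_r=P\,h_r$ with $h_r\in\bz[x]$, hence $A_r(1)\in\{0\}\cup[P(1),\infty)$, and the total count $\sum_r A_r(1)=\#A$ leaves room for exactly one nonempty class, necessarily the one containing $0$. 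This is self-contained and avoids any appeal to the tiling complement for that step; it is a legitimate alternative to the route through Lemma \ref{lemco2.5}.

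One small hole to patch in the ``$\supseteq$'' direction of (ii): your root-of-unity evaluations only show that $S_A$ and $S$ agree on powers of $p$; a priori $\Phi_{q^\beta}(x)$ could divide $A(x)=\sum_i x^{a_i}\cj A_i(x^p)$ for some other prime $q$, since the summands need not be individually nonzero at a primitive $q^\beta$-th root of unity and there is no linear-independence argument available there. This is closed by the same (T1) count you use elsewhere: $A$ tiles, so $\#A=\prod_{s\in S_A}\Phi_s(1)$, and $S\subseteq S_A$ together with $\prod_{s\in S}\Phi_s(1)=p^{|S|}=\#A$ leaves no room for extra prime powers in $S_A$. (In the ``$\supseteq$'' direction of (i) this is automatic, since $A(\zeta)=\cj A(\zeta^p)$ and $\zeta^p$ is again a primitive $q^\beta$-th root.) With that line added, the proof is complete.
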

	\begin{proposition}\cite{CoMe99}\label{prex1}
	Let $p$ be a prime number. Then
	\begin{enumerate}
		\item The only member of $\mathcal T_\ty$ is $\{0\}$. 
		\item For $\alpha\geq 0$, the only member of $\mathcal T_{\{p^{\alpha+1}\}}$ is $p^\alpha\{0,1,\dots, p-1\}$. 
	\end{enumerate}
	\end{proposition}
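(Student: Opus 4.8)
The plan is to dispatch the two statements separately; (i) is immediate from the definitions and (ii) carries the content. For (i): since $\lcm(\ty)=1$, the definition of $\mathcal T_\ty$ forces any member $A$ to satisfy $A\subseteq\{0\}$; being nonempty with $\min(A)=0$, it equals $\{0\}$. And $\{0\}\in\mathcal T_\ty$ because it tiles $\bz$ (take $C=\bz$), has minimum $0$, and $S_{\{0\}}=\ty$ since its associated polynomial is the constant $1$. Hence $\mathcal T_\ty=\{\{0\}\}$.

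For (ii), fix $\alpha\geq 0$ and put $N:=\lcm(\{p^{\alpha+1}\})=p^{\alpha+1}$. Let $A\in\mathcal T_{\{p^{\alpha+1}\}}$, so $A\subseteq\{0,1,\dots,N-1\}$, $\min(A)=0$, $A$ tiles $\bz$, and $\Phi_{p^{\alpha+1}}(x)$ is the only prime-power cyclotomic polynomial dividing $A(x)$. The first step is to pin down the cardinality: since $A$ is a tile it satisfies (T1) by Theorem \ref{th4.13}, so
\[
\#A=A(1)=\prod_{s\in S_A}\Phi_s(1)=\Phi_{p^{\alpha+1}}(1)=p ,
\]
using the standard value $\Phi_{p^{\alpha+1}}(1)=p$ (see Proposition \ref{pr2.15}).

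The main step is a coefficient comparison. Since $\Phi_{p^{\alpha+1}}(x)$ is monic in $\bz[x]$ and divides $A(x)$, the division algorithm over $\bz$ gives $A(x)=\Phi_{p^{\alpha+1}}(x)\,Q(x)$ with $Q\in\bz[x]$; comparing degrees, $\deg Q\leq (N-1)-(p-1)p^\alpha=p^\alpha-1$, so I write $Q(x)=\sum_{j=0}^{p^\alpha-1}q_j x^j$. Because $\Phi_{p^{\alpha+1}}(x)=\Phi_p\!\left(x^{p^\alpha}\right)=\sum_{k=0}^{p-1}x^{k p^\alpha}$ (Proposition \ref{pr2.15}), expanding the product gives
\[
A(x)=\sum_{k=0}^{p-1}\sum_{j=0}^{p^\alpha-1} q_j\, x^{k p^\alpha+j} .
\]
The exponents $k p^\alpha+j$ with $0\leq k\leq p-1$ and $0\leq j\leq p^\alpha-1$ are pairwise distinct and exhaust $\{0,1,\dots,N-1\}$, so the coefficient of $x^{k p^\alpha+j}$ in $A(x)$ equals $q_j$ for all such $k,j$. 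As $A(x)$ is a $0$--$1$ polynomial, each $q_j\in\{0,1\}$; and $p=A(1)=p\sum_j q_j$ forces $\sum_j q_j=1$, so exactly one $q_{j_0}$ equals $1$ and the rest are $0$. Hence $A=\{\,j_0+k p^\alpha : 0\leq k\leq p-1\,\}$, and $\min(A)=0$ gives $j_0=0$, i.e. $A=p^\alpha\{0,1,\dots,p-1\}$. It remains only to note that this set does belong to $\mathcal T_{\{p^{\alpha+1}\}}$: it is contained in $\{0,\dots,N-1\}$ with minimum $0$; it tiles since $p^\alpha\{0,\dots,p-1\}\oplus\{0,\dots,p^\alpha-1\}\oplus N\bz=\bz$; and its associated polynomial is exactly $\Phi_{p^{\alpha+1}}(x)$, so $S_A=\{p^{\alpha+1}\}$.

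The argument is routine; the one point worth isolating is the coefficient comparison above, where the key observation is that multiplying $Q(x)$ by $\Phi_{p^{\alpha+1}}(x)=\sum_{k=0}^{p-1}x^{k p^\alpha}$ merely copies the coefficients of $Q$ into the $p$ disjoint blocks of exponents singled out by the leading base-$p$ digit, so that the $0$--$1$ constraint together with the cardinality count from (T1) forces $Q$ to be a single monomial. I do not expect a genuine obstacle.
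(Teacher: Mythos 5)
Your proof is correct. The paper itself gives no proof of this proposition -- it is quoted from Coven--Meyerowitz -- and the derivation implicit in the paper's framework is the recursive one: apply Lemma \ref{lem4.4}(i) repeatedly to get $\mathcal T_{\{p^{\alpha+1}\}}=p^{\alpha}\,\mathcal T_{\{p\}}$, and then Lemma \ref{lem4.4}(ii) with $\mathcal T_\ty=\{\{0\}\}$ to see that $\mathcal T_{\{p\}}$ consists of the complete residue systems mod $p$ inside $\{0,\dots,p-1\}$, i.e.\ only $\{0,1,\dots,p-1\}$. You instead argue directly: (T1) (via Theorem \ref{th4.13}) pins down $\#A=p$, and then writing $A(x)=\Phi_{p^{\alpha+1}}(x)Q(x)$ and observing that multiplication by $\sum_{k=0}^{p-1}x^{kp^\alpha}$ copies the coefficients of $Q$ into the $p$ disjoint exponent blocks $\{kp^\alpha,\dots,kp^\alpha+p^\alpha-1\}$ forces $Q$ to be a single monomial, hence $Q=1$ by $\min(A)=0$. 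This is a genuinely different and self-contained route: it avoids the structure theory of $\mathcal T_S$ entirely (at the cost of invoking Theorem \ref{th4.13}, which is also nontrivial), and the coefficient-block observation is a nice reusable trick. The recursive route, by contrast, is what generalizes to Theorem \ref{th5.14}, for which your proposition serves as the base case. All the individual steps check out, including the degree bound $\deg Q\leq p^\alpha-1$, the integrality of $Q$ from monic division, and the verification that $p^\alpha\{0,\dots,p-1\}$ does lie in $\mathcal T_{\{p^{\alpha+1}\}}$.
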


	\begin{theorem}\label{th5.14}
	Let $p$ be a prime number. Let $S=\{p^{\alpha_1},p^{\alpha_2},\dots,p^{\alpha_n}\}$ with $1\leq\alpha_1<\alpha_2<\dots<\alpha_n\}$. The following statements are equivalent:
	\begin{enumerate}
		\item $A\in\mathcal T_S$.
		\item For $1\leq k\leq n-1$, there exist numbers $a_{i_1,\dots,i_k}$ $i_1,\dots,i_k=0,1\dots,p-1$ with the following properties
		\begin{enumerate}
			\item The set $\{a_{i_1} :0\leq i_1\leq p-1\}$ is a complete set of representatives modulo $p$, $a_0=0$,
			\item For each $2\leq k\leq n-1$, and each $i_1,\dots,i_{k-1}$ in $\{0,1\dots,p-1\}$, the set $\{a_{i_1,\dots,i_{k-1},i_k} :0\leq i_k\leq p-1\}$ is a complete set of representatives modulo $p$, $a_{i_1,\dots,i_{k-1},0}=0$, 
			\item For each $1\leq k\leq n-1$
			\begin{equation}
			a_{i_1,\dots,i_k}+p^{\alpha_{k+1}-\alpha_k}a_{i_1,\dots,i_{k+1}}+\dots+p^{\alpha_{n-1}-\alpha_k}a_{i_1,\dots,i_{n-1}}\leq p^{\alpha_n-\alpha_k}-1.
			\label{eq5.14.1}
			\end{equation}
			\item 
			\begin{equation}
			A=\left\{p^{\alpha_1-1}a_{i_1}+p^{\alpha_2-1}a_{i_1,i_2}+\dots+p^{\alpha_{n-1}-1}a_{i_1,\dots,i_{n-1}}+p^{\alpha_n-1} j :0\leq i_1,\dots, i_n,j\leq p-1\right\}.
			\label{eq5.14.2}
			\end{equation}
		\end{enumerate}
	
	\end{enumerate}
	\end{theorem}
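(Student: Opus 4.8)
The plan is to establish both implications simultaneously, by strong induction on $\alpha_n$ (the largest exponent occurring in $S$), using the recursive description of the families $\mathcal T_S$ furnished by Lemma \ref{lem4.4} together with Proposition \ref{prex1}; recall from Definition \ref{def4.2} that statement (1) of the theorem is precisely the assertion $A\in\mathcal T_S$. For the base case $\alpha_n=1$ we must have $n=1$ and $S=\{p\}$, and Proposition \ref{prex1}(2) gives $\mathcal T_{\{p\}}=\{0,1,\dots,p-1\}$; this is exactly the set in \eqref{eq5.14.2} for $n=1$, where conditions (a)--(c) are vacuous. For the inductive step I would assume the theorem for every set of powers of $p$ whose largest exponent is strictly less than $\alpha_n$, and split according to whether or not $p\in S$.

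If $\alpha_1>1$, so $p\notin S$, then Lemma \ref{lem4.4}(1) identifies $\mathcal T_S$ with $\{p\cj A:\cj A\in\mathcal T_{\cj S}\}$, where $\cj S=\{p^{\alpha_1-1},\dots,p^{\alpha_n-1}\}$ has the same number of elements as $S$ and largest exponent $\alpha_n-1$, so the inductive hypothesis applies to $\cj S$. Multiplying the parametrization \eqref{eq5.14.2} of a member of $\mathcal T_{\cj S}$ by $p$ sends each $p^{(\alpha_j-1)-1}$ to $p^{\alpha_j-1}$ and leaves every exponent difference $\alpha_{k+1}-\alpha_k$ — hence all of (a), (b), (c) — unchanged, so the characterization for $\cj S$ transports to $S$ with essentially no work.

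If $\alpha_1=1$, so $p\in S$, then Lemma \ref{lem4.4}(2) writes each member of $\mathcal T_S$ as $\bigcup_{i=0}^{p-1}(\{a_i\}\oplus p\cj A_i)$, where $a_0=0$, $\{a_0,\dots,a_{p-1}\}$ is a complete set of representatives modulo $p$, $\cj A_i\in\mathcal T_{\cj S}$, and $\{a_i\}\oplus p\cj A_i\subset\{0,\dots,\lcm(S)-1\}$; here $\cj S=\{p^{\alpha_2-1},\dots,p^{\alpha_n-1}\}$ has $n-1$ elements, with exponents $\alpha_{j+1}-1$ and largest exponent $\alpha_n-1$, so the inductive hypothesis parametrizes each $\cj A_i$ by numbers $b^{(i)}_{j_1,\dots,j_k}$, $1\le k\le n-2$. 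I would then take $a_{i_1}$ to be the representatives $a_{i_1}$ of Lemma \ref{lem4.4}(2) and $a_{i_1,i_2,\dots,i_m}:=b^{(i_1)}_{i_2,\dots,i_m}$ for $m\ge2$; conditions (a) and (b) for $S$ are then exactly the representative conditions coming from Lemma \ref{lem4.4}(2) and from the inductive hypothesis, and since $p^{\alpha_1-1}=1$ the set $\bigcup_i(\{a_i\}\oplus p\cj A_i)$ coincides with \eqref{eq5.14.2}. For the inequalities \eqref{eq5.14.1}: applied to the $\cj A_i$ they become, after the substitution $k\mapsto k+1$ (using the exponents $\alpha_{j+1}-1$ of $\cj S$), precisely \eqref{eq5.14.1} for $S$ with indices in $\{2,\dots,n-1\}$, while the one case $k=1$ of \eqref{eq5.14.1} for $S$ must be matched with the inclusion $\{a_i\}\oplus p\cj A_i\subset\{0,\dots,p^{\alpha_n}-1\}$: a generic element of $\{a_i\}\oplus p\cj A_i$ equals $a_{i_1}+p^{\alpha_2-1}a_{i_1,i_2}+\dots+p^{\alpha_{n-1}-1}a_{i_1,\dots,i_{n-1}}+p^{\alpha_n-1}j$ with $0\le j\le p-1$ and is increasing in $j$, so the inclusion holds if and only if it holds at $j=p-1$, which rearranges to exactly $a_{i_1}+p^{\alpha_2-1}a_{i_1,i_2}+\dots+p^{\alpha_{n-1}-1}a_{i_1,\dots,i_{n-1}}\le p^{\alpha_n-1}-1$. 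Running this dictionary forwards — decomposing a given $A\in\mathcal T_S$ via Lemma \ref{lem4.4} and feeding the pieces to the inductive hypothesis — gives (1)$\Rightarrow$(2), and running it backwards — building the $\cj A_i$ from the data $a_{i_1,\dots,i_k}$ and reassembling via Lemma \ref{lem4.4} — gives (2)$\Rightarrow$(1).

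I expect the one real difficulty to be organizational: one must keep the exponent shift $\alpha_j\mapsto\alpha_{j+1}-1$ and the relabeling $a_{i_1,\dots,i_m}\leftrightarrow b^{(i_1)}_{i_2,\dots,i_m}$ consistent so that every power of $p$ and every additive constant in \eqref{eq5.14.1} lands in the correct place when passing between the $(n-1)$-level data for $\cj S$ and the $n$-level data for $S$; and one must verify that \eqref{eq5.14.1} with $k=1$ is genuinely \emph{equivalent} to, and not merely a consequence of, the box constraint in Lemma \ref{lem4.4}(2), keeping in mind that in that lemma the representatives $a_i$ are allowed to be larger than $p-1$.
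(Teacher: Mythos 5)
Your proof is correct and follows essentially the same route as the paper's: both arguments run the recursion of Lemma \ref{lem4.4} together with Proposition \ref{prex1} and translate the box constraint $\{a_i\}\oplus p\cj A_i\subset\{0,\dots,\lcm(S)-1\}$ into the inequalities \eqref{eq5.14.1}. The only difference is bookkeeping: the paper inducts on $n$ and strips off the factor $p^{\alpha_1-1}$ (Lemma \ref{lem4.4}(i)) and the first digit level (Lemma \ref{lem4.4}(ii)) in one combined step, whereas you induct on $\alpha_n$ and apply one case of the lemma per step.
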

	
	\begin{proof}
	We prove the equivalence of (i) and (ii) by induction on $n$. For $n=1$, the result follows from Proposition \ref{prex1}. Assume now the statements are equivalent for $n$ and take $S=\{p^{\alpha_1},\dots,p^{\alpha_{n+1}}\}$. Using Lemma \ref{lem4.4}(i), we have that $A\in \mathcal T_S$ if and only if $A=p^{\alpha_1-1}\cj A$ with $\cj A\in \mathcal T_{\cj S_1}$ where $\cj S=\{p,p^{\alpha_2-\alpha_1+1},\dots,p^{\alpha_{n+1}-\alpha_1+1}\}$. Using Lemma \ref{lem4.4}(ii), we have that $\cj A\in\mathcal T_{\cj S}$ if and only if there exists a complete set of representatives modulo $p$, $\{a_{i_1} : 0\leq i_1\leq p-1\}$, $a_0=0$ and, for each $0\leq i_1\leq p-1$, a set $\cj A_{i_1}\in \mathcal T_{\cj S'}$, where $\cj S'=\{p^{\alpha_2-\alpha_1},\dots, p^{\alpha_{n+1}-\alpha_1}\}$ such that $a_{i_1}+p\cj A_{i_1}\subset\{0,\dots,\lcm(\cj S)-1\}=\{0,\dots,p^{\alpha_{n+1}-\alpha_1+1}-1\}$, and 
	$$\cj A=\cup_{i_1=0}^{p-1}(\{a_{i_1}\}+p\cj A_{i_1}).$$
	
	Using the induction hypothesis for the set $\cj S'$, we get that, for each $0\leq i_1\leq p-1$ the set $\cj A_{i_1}$ must be of the form 
	$$\cj A_{i_1}=\left\{p^{\alpha_2-\alpha_1-1}a_{i_1,i_2}+p^{\alpha_3-\alpha_1-1}a_{i_1,i_2,i_3}+\dots+p^{\alpha_{n}-\alpha_1-1}a_{i_1,\dots,i_{n}}\right.$$$$\left.+p^{\alpha_{n+1}-\alpha_1-1}j :0\leq i_2,\dots,i_{n},j\leq p-1\right\},$$
	where for each $0\leq i_2,\dots, i_{k-1}\leq p-1$, the set $\{a_{i_1,\dots, i_k} : 0\leq i_k\leq p-1\}$ is a complete set of representatives modulo $p$ and $a_{i_1,\dots,i_{k-1},0}=0$. 
	Also 
	$$a_{i_1,i_2,\dots,i_k}+p^{(\alpha_{k+1}-\alpha_1)-(\alpha_k-\alpha_1)}a_{i_1,\dots,i_{k+1}}+\dots+p^{(\alpha_n-\alpha_1)-(\alpha_k-\alpha_1)}\leq p^{(\alpha_{n+1}-\alpha_1)-(\alpha_k-\alpha_1)}-1$$
	and this implies (c) for $k\geq 2$. 
	We must also have 
	$$a_{i_1}+p^{\alpha_2-\alpha_1}a_{i_1,i_2}+\dots p^{\alpha_n-\alpha_1}a_{i_1,\dots,i_n}+p^{\alpha_{n+1}-\alpha_1}(p-1)\leq p^{\alpha_{n+1}-\alpha_1+1},$$
	and this implies (c) for $k=1$. 
	
	Then 
	$$A=p^{\alpha_1-1}\cup_{i_1=0}^{p-1}(\{a_{i_1}\}+p\cj A_{i_1}),$$
	and (d) follows. 
	
	\end{proof}
	\begin{proof}[Proof of Theorem \ref{th2.1}]
	Assume that $A$ and $A'$ have the given form. We show that $A'\oplus B=\bz_{p^{\alpha_n}}$. Note that $\#A'=p^n$ and $\#B=p^{\alpha_n-n}$ so $\#A'\cdot\#B=p^{\alpha_n}$. By Lemma \ref{lema1}, it is enough to show that $(A'-A')\cap (B-B)=\{0\}$ in $\bz_{p^{\alpha_n}}$. If we pick an element in the intersection, it can be written in both ways as 
	$$\sum_{k=1}^n p^{\alpha_k-1}(a_{i_1,\dots,i_k}-a_{i_1',\dots,i_k'})=\sum_{\overset{j=0,\dots,\alpha_n-1}{j\neq \alpha_1-1,\dots,\alpha_n-1}}p^j(b_j-b_j').$$ 
	
	Take the first index $l$ such that $i_l\neq i_l'$. Then the left-hand side is divisible by $p^{\alpha_l-1}$ but not by $p^{\alpha_l}$. Since $p^{\alpha_l-1}$ does not appear on the right hand side, it follows, by contradiction, that both sides are equal to 0. 
	
	For the converse, if $A$ is a tile then the result follows from Theorem \ref{th5.14}. 
	
	It remains to check that the CM-tiling set and the \L aba spectra are those given in \eqref{eq2.1.2},\eqref{eq2.1.3} and \eqref{eq2.1.4}. By Lemma \ref{lemco2.1}, we have that $S_A$ and $S_B$ are complementary, so 
	$$S_B=\left\{p^j : j\in\{1,\dots,\alpha_n\}, j\neq\alpha_1,\dots,\alpha_n\right\}.$$
	Then the CM-tiling set is defined by the polynomial 
	$$\prod_{\overset{j=1\dots\alpha_n}{j\neq\alpha_1,\dots,\alpha_n}}\Phi_j(x)=\prod_{\overset{j=1\dots\alpha_n}{j\neq\alpha_1,\dots,\alpha_n}}(1+x^{p^{j-1}}+x^{2p^{j-1}}+\dots+x^{(p-1)p^{j-1}})$$
	$$=\sum_{\overset{0\leq b_j\leq p-1}{1\leq j\leq \alpha_n, j\neq \alpha_1,\dots,\alpha_n}}x^{\sum_{1\leq j\leq \alpha_n, j\neq \alpha_1,\dots,\alpha_n}b_jx^{p^{j-1}}}.$$
	This implies \eqref{eq2.1.2}. Since we have the form of $S_B$, \eqref{eq2.1.3} and \eqref{eq2.1.4} follow immediately.

	\end{proof}
	
	
	\begin{remark}
	In \cite{New77}, Newman classifies the finite sets of integers which tile $\mathbb{Z}$ when the number of elements in the set is a prime power. The tiling condition is stated in Theorem \ref{thnew}. In Proposition \ref{thnewnew}, we determine the relation between the numbers $e_{ij}$ in Newman's paper and the numbers $\alpha_i$ and the set $S_A$ in Theorem \ref{th2.1}.
	\end{remark}
	
	\begin{theorem}\label{thnew}\cite{New77}
	Let $a_1, a_2, \ldots, a_k$ be distinct integers with $k = p^{\alpha}$, $p$ a prime, $\alpha$ a positive integer. For each pair $a_i, a_j, i \neq j$, we denote by $p^{e_{ij}}$ the highest power of $p$ which divides $a_i - a_j$. The set $\{a_1, a_2, \ldots, a_k\}$ is a tile if and only if there are at most $\alpha$ distinct $e_{ij}$. 
	\end{theorem}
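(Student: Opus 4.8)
The plan is to derive Newman's criterion from the structure theorem for prime‑power tiles, Theorem~\ref{th2.1}, together with an elementary counting bound on $p$‑adic valuations. Throughout, write $v_p(n)$ for the exponent of the largest power of $p$ dividing a nonzero integer $n$, so that $e_{ij}=v_p(a_i-a_j)$; translating the set changes neither the tiling property nor the $e_{ij}$, so we may work with the translate having least element $0$.

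For the ``only if'' direction, suppose $A$ is a tile with $\#A=p^\alpha$. By Theorem~\ref{th2.1} (taken with $n=\alpha$) we may assume $A$ is congruent modulo $p^{\alpha_n}$ to a set $A'$ of the form \eqref{eq2.1.1} for some $1\le\alpha_1<\dots<\alpha_n$. Next I would read off the differential valuations of $A'$: for two distinct index tuples, if $l$ is the first coordinate at which they differ then, exactly as in the proof of Theorem~\ref{th2.1}, the corresponding difference is divisible by $p^{\alpha_l-1}$ but not by $p^{\alpha_l}$, so its valuation is $\alpha_l-1\le\alpha_n-1<\alpha_n$. Since $A\equiv A'\pmod{p^{\alpha_n}}$ and all these valuations are $<\alpha_n$, the differences of elements of $A$ have the same valuations; hence every $e_{ij}$ lies in the $n$‑element set $\{\alpha_1-1,\dots,\alpha_n-1\}$, so there are at most $n=\alpha$ distinct values. (This computation also establishes the correspondence between the $e_{ij}$, the $\alpha_k$, and $S_A$ alluded to in the Remark above.)

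For the ``if'' direction I would first prove a counting lemma: if $F$ is a finite set of integers whose pairwise differences realize exactly $m$ distinct values of $v_p$, then $\#F\le p^m$. This goes by induction on $m$ (the case $m=0$ being trivial): partition $F$ into residue classes modulo $p^{\delta}$ with $\delta$ the largest of the $m$ valuations; inside a class all pairwise valuations equal $\delta$, forcing that class to have at most $p$ elements (after subtracting one element and dividing by $p^\delta$, its elements are distinct modulo $p$), while the class representatives realize at most $m-1$ distinct valuations. Now suppose $\#A=p^\alpha$ and there are at most $\alpha$ distinct $e_{ij}$. The lemma forces this number to be exactly $\alpha$, say $\beta_1<\dots<\beta_\alpha$, and forces rigidity at each step of an induction on $\alpha$ whose (strengthened) conclusion is: with $\min A=0$, $A$ is congruent modulo $p^{\beta_\alpha+1}$ to a set of the form \eqref{eq2.1.1} with $\alpha_k=\beta_k+1$. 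In the inductive step, factor $A=p^{\beta_1}\widehat A$ and partition $\widehat A$ into residue classes modulo $p$; the equality case of the counting lemma shows there are exactly $p$ classes, each of which, after translating by its least element and dividing by $p$, is a set of $p^{\alpha-1}$ non‑negative integers realizing exactly the $\alpha-1$ valuations $\beta_2-\beta_1-1<\dots<\beta_\alpha-\beta_1-1$. The induction hypothesis applies to each, delivering the form \eqref{eq2.1.1} modulo $p^{(\beta_\alpha-\beta_1-1)+1}=p^{\beta_\alpha-\beta_1}$; reassembling the $p$ pieces then recognizes $A$ itself, modulo $p^{\beta_\alpha+1}$, as a set of the form \eqref{eq2.1.1}, whence Theorem~\ref{th2.1} gives that $A$ tiles $\bz$.

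I expect the ``if'' direction to be the main obstacle, and within it the bookkeeping of moduli in the reassembly step: it is not enough that each of the $p$ sub‑pieces be a tile; one needs it to match \eqref{eq2.1.1} to exactly the power of $p$ dictated by its largest differential valuation, which is why the counting lemma — applied to force each sub‑piece to realize the \emph{full} set of $\alpha-1$ valuations rather than a smaller one — is indispensable. An essentially equivalent alternative for this direction is to reduce $A$ modulo $p^{\beta_\alpha+1}$ and show the reduced set lies in $\mathcal{T}_S$ for $S=\{p^{\beta_1+1},\dots,p^{\beta_\alpha+1}\}$ by verifying the explicit description \eqref{eq5.14.2} of Theorem~\ref{th5.14}, whose proof is already organized as precisely this recursion via Lemma~\ref{lem4.4}.
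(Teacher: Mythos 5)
Your proof is correct, but note that the paper itself offers no proof of this statement---Theorem \ref{thnew} is quoted from Newman \cite{New77}---so there is nothing to compare it against except Proposition \ref{thnewnew}, whose proof is essentially your ``only if'' direction: reading off from the normal form \eqref{eq2.1.1} that every difference of distinct elements has $p$-adic valuation in $\{\alpha_1-1,\dots,\alpha_n-1\}$. (Do record there that the $p^n$ elements of $A'$ are pairwise incongruent modulo $p^{\alpha_n}$, so the congruence $A\equiv A'$ matches distinct elements of $A$ to distinct index tuples and no difference acquires valuation $\ge\alpha_n$.) The genuinely new content is your ``if'' direction, which derives Newman's criterion from the paper's Theorem \ref{th2.1} rather than from Newman's original argument, which works directly with the polynomial $\sum_i x^{a_i}$ at $p^k$-th roots of unity. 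Your counting lemma is the right rigidity device---state it with ``at most $m$'' distinct valuations rather than ``exactly $m$'', since the set of class representatives in your induction may realize fewer---and its equality case correctly forces $p$ residue classes of size $p^{\alpha-1}$, each realizing all $\alpha-1$ remaining valuations, so the recursion reassembles to the form \eqref{eq2.1.1} with $\alpha_k=\beta_k+1$ modulo $p^{\beta_\alpha+1}$ and Theorem \ref{th2.1} closes the argument. Newman's route is independent of the Coven--Meyerowitz machinery underlying Theorem \ref{th2.1}; yours buys an explicit dictionary between the $e_{ij}$, the exponents $\alpha_k$, and $S_A$, which is exactly what Proposition \ref{thnewnew} is after.
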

	
	\begin{proposition}\label{thnewnew}
	Let $A = \{a_1, \ldots, a_k \}$ be a set of non-negative integers which tile $\bz$, with $\#A = p^{n}$, $p$ a prime, $n$ a positive integer. Then $S_A = \{p^{e_{ij}+1} : 1 \leq i, j \leq p^{n}\}$, where $e_{ij}$ denotes the highest power of $p$ which divides $a_i - a_j$, as in Theorem \ref{thnew}.
	\end{proposition}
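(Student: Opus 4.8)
The plan is to feed the explicit description of one-prime-power tiles given by Theorem~\ref{th2.1} into a short $p$-adic valuation computation. For a nonzero integer $m$, write $v_p(m)$ for the exponent of the largest power of $p$ dividing $m$, so that in the notation of Theorem~\ref{thnew} one has $e_{ij}=v_p(a_i-a_j)$ whenever $i\neq j$. Since $A$ tiles $\bz$ and $\#A=p^n$, Theorem~\ref{th2.1} produces integers $1\le\alpha_1<\dots<\alpha_n$ with $S_A=\{p^{\alpha_1},\dots,p^{\alpha_n}\}$ such that $A$ is congruent modulo $p^{\alpha_n}$ to a set $A'$ of the form \eqref{eq2.1.1}, built from the nested complete residue systems $\{a_{i_1,\dots,i_k}:0\le i_k\le p-1\}$ with $a_{i_1,\dots,i_{k-1},0}=0$, and with $\#A'=p^n$ (so the map $(i_1,\dots,i_n)\mapsto\sum_{k}p^{\alpha_k-1}a_{i_1,\dots,i_k}$ onto $A'$ is a bijection). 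Because $S_A=\{p^{\alpha_1},\dots,p^{\alpha_n}\}$, the claim $S_A=\{p^{e_{ij}+1}:1\le i,j\le p^n\}$ is equivalent to
$$\{\,v_p(a_i-a_j):1\le i,j\le p^n,\ a_i\neq a_j\,\}=\{\alpha_1-1,\dots,\alpha_n-1\},$$
and that is what I would establish.

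The first step is to pass from $A$ to the finite model $A'$. Since $\#A=p^n$ has a single prime factor, $A$ has the Coven--Meyerowitz property by Theorems~\ref{th4.13} and~\ref{thcob2}, hence by Theorem~\ref{thcm} it tiles with period $\lcm(S_A)=p^{\alpha_n}$; as the tiling complement is then $p^{\alpha_n}$-periodic, no two distinct elements of $A$ can be congruent modulo $p^{\alpha_n}$, so $e_{ij}<\alpha_n$ for all $i\neq j$. Writing $a_i\equiv a_i'$, $a_j\equiv a_j' \pmod{p^{\alpha_n}}$ with $a_i',a_j'\in A'$, the differences $a_i-a_j$ and $a_i'-a_j'$ agree modulo $p^{\alpha_n}$, and a valuation strictly below $\alpha_n$ is unaffected by adding a multiple of $p^{\alpha_n}$; hence $v_p(a_i-a_j)=v_p(a_i'-a_j')$ (which in passing re-confirms $a_i'\neq a_j'$). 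It therefore suffices to compute valuations of differences of distinct elements of $A'$.

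The second step is the valuation count, which is the ``first differing digit'' argument already used for Theorems~\ref{th5.14} and~\ref{th2.1}. Given distinct $x=\sum_{k=1}^n p^{\alpha_k-1}a_{i_1,\dots,i_k}$ and $y=\sum_{k=1}^n p^{\alpha_k-1}a_{i_1',\dots,i_k'}$ in $A'$, let $l$ be the least index with $i_l\neq i_l'$: the terms with $k<l$ cancel, the term with $k=l$ equals $p^{\alpha_l-1}\bigl(a_{i_1,\dots,i_{l-1},i_l}-a_{i_1,\dots,i_{l-1},i_l'}\bigr)$ whose bracket is prime to $p$ because $i_l\neq i_l'$ and $\{a_{i_1,\dots,i_{l-1},j}:0\le j\le p-1\}$ is a complete residue system modulo $p$, and every term with $k>l$ is divisible by $p^{\alpha_k-1}$ with $\alpha_k-1\ge\alpha_{l+1}-1\ge\alpha_l$; hence $v_p(x-y)=\alpha_l-1$. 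This gives ``$\subseteq$''. For ``$\supseteq$'', for each $l$ compare the element of $A'$ coming from the all-zero tuple, which is $0$, with the one coming from the tuple equal to $0$ except for a $1$ in position $l$, which is $p^{\alpha_l-1}a_{0,\dots,0,1}$ (the later contributions vanish since their index tuples end in $0$ and $a_{i_1,\dots,i_{k-1},0}=0$); since $p\nmid a_{0,\dots,0,1}$, this pair has valuation exactly $\alpha_l-1$. Combining the two inclusions gives the displayed identity, hence $S_A=\{p^{e_{ij}+1}:1\le i,j\le p^n\}$. The only point needing real care is the first step — verifying that each $e_{ij}$ is $<\alpha_n$ and is recorded faithfully by $A'$, which is precisely where the periodicity of the tiling enters; everything after that is the routine digit bookkeeping of Theorem~\ref{th2.1}, and as a byproduct one sees there are exactly $n$ distinct values $e_{ij}$, consistent with Newman's criterion in Theorem~\ref{thnew}.
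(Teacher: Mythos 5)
Your proof is correct and follows essentially the same route as the paper's: reduce to the explicit model $A'$ from Theorem~\ref{th2.1} and read off $v_p(a_i-a_j)=\alpha_l-1$ from the first differing index via the complete-residue-system condition. You are somewhat more careful than the paper on two points it leaves implicit --- that $e_{ij}<\alpha_n$ (so reduction mod $p^{\alpha_n}$ preserves the valuation) and that every value $\alpha_l-1$ is actually attained --- but the argument is the same.
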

	
	\begin{proof}
	Let $S_A = \{p^{\alpha_1}, \ldots, p^{\alpha_n}\}$. Since $A$ is a tile, we have by Theorem \ref{th2.1} that $A$ is congruent modulo $p^{\alpha_n}$ to the set 
$$A'=\left\{\sum_{k=1}^n p^{\alpha_k-1}a_{i_1,\dots,i_k} : 0\leq i_1,\dots,i_n\leq n-1\right\}.$$
Take $a, a' \in A$. Then
$$a={\sum_{k=1}^n p^{\alpha_k-1}a_{i_1,\dots,i_k} + p^{\alpha_n}m},$$
$$a'={\sum_{k=1}^n p^{\alpha_k-1}a_{i_{1'},\dots,i_{k'}} + p^{\alpha_n}m'}.$$
So
$$a-a'={\sum_{k=1}^n (a_{i_1,\dots,i_k}-a_{i_{1'},\dots,i_{k'}})p^{\alpha_k-1} + p^{\alpha_n}(m-m')}.$$
If $i_1 = i_{1'}, \ldots, i_n = i_{n'}$, then $a = a'$. So then $m = m'$.
 If there exists a $k$ such that $i_k \neq i_{k'}$, then take the smallest such $k$. Let $e$ be the largest power of $p$ such that $p^e$ divides $a-a'$. Then
$$a-a'=p^{\alpha_k-1}(a_{{i_1}, \ldots, i_{k-1}, i_k} - a_{{i_{1'}}, \ldots, i_{{k-1}'}, i_{k'}}) + {\sum_{j=k+1}^n (a_{i_1,\dots,i_j}-a_{i_{1'},\dots,i_{j'}})p^{\alpha_j-1} + p^{\alpha_n}(m-m')}.$$
Since $\{a_{i_1, \ldots , i_{k-1},l} : 0 \leq l \leq p-1\}$ is a complete set of representatives modulo $p$, this means that $a-a'$ is divisible by $p^{\alpha_k-1}$, but not $p^{\alpha_k}$. Therefore, $e = \alpha_k - 1$. Relabeling $a = a_i$ and $a' = a_j$, we get that $e_{ij} = \alpha_k-1$. Doing this for all $a_i, a_j \in A$, we get that the set of all $e_{ij}$ is $\{\alpha_1 - 1, \ldots, \alpha_n - 1\}$ and the result follows.
	\end{proof}
	

	For the case when the cardinality of a tile has two prime factors, to describe the structure of the such tiles, one can use the following lemma from \cite{CoMe99}.

	\begin{lemma}\label{lem4.5}
	Suppose $S$ contains powers of both the primes $p$ and $q$. Let 
	$$\cj S=\{p^\alpha : p^{\alpha+1}\in S\}\cup \{q^\beta : q^\beta\in S\},\quad \cj S'=\{p^\alpha : p^\alpha\in S\}\cup \{q^\beta : q^{\beta+1}\in S\}.$$
	\begin{enumerate}
		\item If $p\in S$, then 
		$$\mathcal T_{S}=\left\{\cup_{i=0}^{p-1}(\{a_i\}\oplus p\cj A_i) : \cj A_i\in \mathcal T_{\cj S}, a_0=0,\{a_0,a_1,\dots, a_{p-1}\}\mbox{ a complete }\right.$$
		$$\left.\mbox{set of representatives modulo $p$ and every }\{a_i\}\oplus p \cj A_i\subset \{0,1,\dots,\lcm(S)-1\}\right\}.$$
		\item If $q\in S$, then
		$$\mathcal T_{S}=\left\{\cup_{i=0}^{q-1}(\{a_i\}\oplus q\cj A_i) : \cj A_i\in \mathcal T_{\cj S'}, a_0=0,\{a_0,a_1,\dots, a_{q-1}\}\mbox{ a complete }\right.$$
		$$\left.\mbox{set of representatives modulo $q$ and every }\{a_i\}\oplus q \cj A_i\subset \{0,1,\dots,\lcm(S)-1\}\right\}.$$
		\item If $p,q\not\in S$, then $A\subset p\bz$ or $A\subset q\bz$ and 
		$$\{A\in \mathcal T_S : A\subset p\bz\}=\{p\cj A : \cj A\in\mathcal T_{\cj S}\},\quad 
		\{A\in \mathcal T_S : A\subset q\bz\}=\{q\cj A : \cj A\in\mathcal T_{\cj S'}\}. $$
		
	\end{enumerate}
	\end{lemma}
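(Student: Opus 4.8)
The plan is to handle the three cases by one common device and, in each, verify both inclusions of the asserted equality.

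First I would isolate the mechanism behind (i) and (ii): if $r\in\{p,q\}$ satisfies $r\in S$ and $A\in\mathcal T_S$, then $\Phi_r(x)=1+x+\dots+x^{r-1}$ divides $A(x)$, so $A(x)\bmod(x^r-1)$ is divisible by $\Phi_r$; since a polynomial of degree $<r$ divisible by $\Phi_r$ is a scalar multiple of $1+x+\dots+x^{r-1}$, the residue counts $c_j:=\#\{a\in A:a\equiv j\pmod r\}$ are all equal to $\#A/r>0$. Hence every class mod $r$ is met, and writing $a_i:=\min\{a\in A:a\equiv i\pmod r\}$ and $\cj A_i:=(\{a\in A:a\equiv i\}-a_i)/r$ one gets $A=\bigcup_{i=0}^{r-1}(\{a_i\}\oplus r\cj A_i)$ with $\{a_0,\dots,a_{r-1}\}$ a complete set of representatives mod $r$ and $a_0=0$ (since $0=\min A$). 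This already produces the decomposition appearing in (i) and (ii); the remaining work is to identify $S_{\cj A_i}$ and to run the converse.

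For (i)---case (ii) being the verbatim $q$-analogue, with $\cj S$ replaced by $\cj S'$---I would proceed as follows. Given $A\in\mathcal T_S$, decompose along $p$ as above. Since $\#A$ has at most, hence exactly, two prime factors, $A$ has the CM-property by Lemma~\ref{lem4.1} and Theorem~\ref{thcob2}; let $B=\operatorname{CM}(A)$. By Remark~\ref{remcm} (as $p\in S_A$) we have $B\subset p\bz$, say $B=p\cj B$, so $B(x)=\cj B(x^p)$. I would substitute $A(x)=\sum_i x^{a_i}\cj A_i(x^p)$ and $B(x)=\cj B(x^p)$ into $A(x)B(x)\equiv 1+x+\dots+x^{N-1}\pmod{x^N-1}$, $N=\lcm(S)$, and use that $x^N-1$ is a polynomial in $x^p$, so reduction modulo $x^N-1$ preserves the grading of $\bz[x]$ by residue mod $p$. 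Matching the graded piece attached to the residue of $a_i$ and cancelling a power of $y=x^p$ yields $\cj A_i(y)\cj B(y)\equiv 1+y+\dots+y^{N/p-1}\pmod{y^{N/p}-1}$ for each $i$, so by Proposition~\ref{pr3.2} every $\cj A_i$ tiles $\bz$ with period $N/p=\lcm(\cj S)$, while $\min\cj A_0=0$ and $\cj A_i\subset\{0,\dots,N/p-1\}$ are immediate. To pin down $S_{\cj A_i}=\cj S$ I would transport cyclotomic divisibility through $y\mapsto y^p$: $\Phi_{p^{\alpha+1}}(x)\mid g(x^p)\iff\Phi_{p^\alpha}(x)\mid g(x)$, $\Phi_{q^\beta}(x)\mid g(x^p)\iff\Phi_{q^\beta}(x)\mid g(x)$ (Proposition~\ref{pr2.15}), and $\Phi_p\nmid g(x^p)$; applied to $B(x)=\cj B(x^p)$ this expresses $S_B$ through $S_{\cj B}$, and since $(S_A,S_B)$ are complementary among the prime-power divisors of $N$ while $(S_{\cj A_i},S_{\cj B})$ are complementary among those of $N/p$ (Lemma~\ref{lemco2.1}), the bookkeeping forces $S_{\cj A_i}=\cj S$; thus $\cj A_i\in\mathcal T_{\cj S}$. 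Conversely, given a complete residue system $a_0=0,a_1,\dots,a_{p-1}$ mod $p$ and sets $\cj A_i\in\mathcal T_{\cj S}$ with every $\{a_i\}\oplus p\cj A_i\subset\{0,\dots,N-1\}$: all $\cj A_i$ share one CM-tiling set $\cj B$ (Remark~\ref{remcm}; it depends only on $\cj S$), and setting $C(x)=\cj B(x^p)$ and running the graded computation backwards (using that the $a_i$ form a complete residue system) gives $A(x)C(x)\equiv 1+x+\dots+x^{N-1}\pmod{x^N-1}$, so $A$ tiles; then $\min A=0$, $A\subset\{0,\dots,N-1\}$ and (via the complement $C=p\cj B$ and Lemma~\ref{lemco2.1}) $S_A=S$ all follow, so $A\in\mathcal T_S$.

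For (iii), $p,q\notin S$. The easy half first: if $A\in\mathcal T_S$ and $A\subset p\bz$, write $A=p\cj A$; then $A$ tiles $\bz$ iff $\cj A$ does (scale a complement of $\cj A$ by $p$ in one direction; restrict any complement of $A$ to residue classes mod $p$ in the other, using $A\subset p\bz$), the divisibility transport above gives $S_{\cj A}=\cj S\iff S_A=S$, and since $\lcm(S)=p\lcm(\cj S)$ the remaining conditions correspond; hence $\{A\in\mathcal T_S:A\subset p\bz\}=\{p\cj A:\cj A\in\mathcal T_{\cj S}\}$ and, symmetrically, $\{A\in\mathcal T_S:A\subset q\bz\}=\{q\cj A:\cj A\in\mathcal T_{\cj S'}\}$. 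What remains, and what I expect to be the real obstacle, is the dichotomy: every $A\in\mathcal T_S$ lies in $p\bz$ or in $q\bz$. My intended route is to take $B=\operatorname{CM}(A)$ and write $N=p^eq^f$ with $e,f\geq 2$ (because $p,q\notin S$); from the explicit formula $B(x)=\prod_{p^j\in S_B}\Phi_{p^j}(x^{q^f})\cdot\prod_{q^j\in S_B}\Phi_{q^j}(x^{p^e})$ one checks (the two partial products have degrees $<p^e$ and $<q^f$, so they do not interfere) that, under $\bz_N\cong\bz_{p^e}\times\bz_{q^f}$, $B$ is a product $\widetilde P\times\widetilde Q$ with $\widetilde P$ a tile of $\bz_{p^e}$ having $p\in S_{\widetilde P}$ and $\widetilde Q$ a tile of $\bz_{q^f}$ having $q\in S_{\widetilde Q}$; then $A\oplus(\widetilde P\times\widetilde Q)=\bz_{p^e}\times\bz_{q^f}$, together with $\Phi_p\nmid A$, $\Phi_q\nmid A$ and the single-prime classification of tiles and their complements on each coordinate (Lemma~\ref{lem4.4}, or Theorem~\ref{thnew} with Proposition~\ref{thnewnew}), should force one of the two projections of $A$ into a proper $p$- or $q$-subprogression, i.e.\ $A\subset p\bz$ or $A\subset q\bz$. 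The delicate point is that $A$ itself need not be a product set---already for $S=\{p^2,q^2\}$ one finds members of $\mathcal T_S$ lying in $p\bz$ but not in $q\bz$---so this forcing must be carried out at the level of the residue-class marginals of $A$ rather than by a coordinatewise splitting; once it is done, the two families are read off exactly as in the easy half, completing (iii).
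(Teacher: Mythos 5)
First, a point of reference: the paper does not prove this lemma at all --- it is quoted from \cite{CoMe99} ("one can use the following lemma from [CoMe99]"), so there is no internal proof to compare against, and your attempt has to be judged on its own.

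Parts (i) and (ii), and the identifications $\{A\in\mathcal T_S : A\subset p\bz\}=\{p\cj A : \cj A\in\mathcal T_{\cj S}\}$, $\{A\in\mathcal T_S : A\subset q\bz\}=\{q\cj A : \cj A\in\mathcal T_{\cj S'}\}$ in (iii), are essentially sound: equidistribution of $A$ modulo $p$ from $\Phi_p\mid A(x)$, the passage to a complement contained in $p\bz$ via Remark \ref{remcm}, the mod-$p$ grading of $A(x)B(x)\equiv 1+x+\dots+x^{N-1}\pmod{x^N-1}$, and the bookkeeping of $S_{\cj A_i}$ through Lemmas \ref{lemco2.1} and \ref{lemco1.4} all check out. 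This is in substance the Coven--Meyerowitz route; their Lemma 2.5 (reproduced here as Lemma \ref{lemco2.5}) packages exactly the decomposition you rederive.

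The genuine gap is the dichotomy in (iii): if $p,q\notin S$, every $A\in\mathcal T_S$ lies in $p\bz$ or in $q\bz$. You correctly flag this as the real obstacle, but the route you sketch cannot close it. The product structure of the CM complement, $B\cong\widetilde P\times\widetilde Q$ under $\bz_N\cong\bz_{p^e}\times\bz_{q^f}$, only constrains the marginals of $A$: writing $f(x)=\#\{a\in A:\pi_1(a)=x\}$ for the first marginal, the tiling gives that $f*\mathbf{1}_{\widetilde P}$ is constant on $\bz_{p^e}$, hence $\widehat f$ vanishes at the characters of order $p^j$ with $p^j\in S_A$; but since $p\in S_{\widetilde P}$ (because $p\notin S_A$), one gets \emph{no} information at the characters of order exactly $p$, which is precisely what would be needed to force $f$ to be supported on $p\bz_{p^e}$, i.e.\ $A\subset p\bz$. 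The same happens on the $q$-side, so the marginal analysis is consistent with $A$ lying in neither subgroup; only the joint structure rules that out. In \cite{CoMe99} this step is exactly where Sands' factorization theorem is invoked: if $\bz_n=A\oplus B$ with $n=p^eq^f$ and $0\in A\cap B$, then one of $A$, $B$ is contained in $p\bz_n$ or in $q\bz_n$. Granting that, the dichotomy is immediate: since $p,q\notin S_A$, Proposition \ref{pr3.2} forces $\Phi_p\mid B(x)$ and $\Phi_q\mid B(x)$, whereas $B\subset p\bz_n$ would give $B(x)=\cj B(x^p)$ and $B(e^{2\pi i/p})=\#B\neq 0$ (and likewise for $q$); so neither alternative can fall on $B$, and hence $A\subset p\bz$ or $A\subset q\bz$. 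You must either import Sands' theorem, as Coven and Meyerowitz do, or prove it; the residue-class marginal forcing you propose is not a substitute.
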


	\section{Szab{\'o}'s examples }
	In \cite{Sza85}, Szab{\'o} constructed a class of examples to give a negative answer to two questions due to K. Corr{\'a}di and A.D. Sands respectively:
	
	If $G$ is a finite abelian group and $G=A_1\oplus A_2$ is one of its normed factorizations, in the sense that both factors contain the zero, must one of the factors contain some proper subgroup of $G$?
	
	If $G$ is a finite abelian group and $G=A_1\oplus A_2$ is a factorization, must one of the factors be contained in some proper subgroup of $G$?
	
	We will present here Szab{\'o'}s examples and prove that they have the CM-property and find their tiling sets and spectra. We begin with a general proposition which encapsulates the core ideas in Szab{\'o}'s examples. 
	
	\begin{proposition}\label{pr5.7} Let $G$ be a finite abelian group.
	Suppose we have a factorization $A\oplus B'=G$. Let $B_1,\dots,B_r$ be disjoint subsets of $B'$ and $g_1,\dots,g_r$ in $G$ with the property that 
	\begin{equation}
	A+B_i=A+B_i+g_i\mbox{ for all }i.
	\label{eq5.7.1}
	\end{equation}
	Define 
	\begin{equation}
	B=B'\cup\left(\bigcup_{i=1}^r(B_i+g_i)\right)\setminus\left(\bigcup_{i=1}^rB_i\right).
	\label{eq5.7.2}
	\end{equation}
	
	Then $A\oplus B=G$. 
	\end{proposition}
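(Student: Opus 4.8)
The plan is to deduce $A\oplus B=G$ from the factorization $A\oplus B'=G$ in two short steps: first show $A+B=G$ as a set, then close the argument with a cardinality count. Throughout I would write $P=\bigcup_{i=1}^r B_i$, and use that, since the $B_i$ are pairwise disjoint subsets of $B'$, the set in \eqref{eq5.7.2} is $B=(B'\setminus P)\cup\bigcup_{i=1}^r(B_i+g_i)$.

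For the covering step I would distribute the sum over the union and invoke the hypothesis:
\[
A+B=\bigl(A+(B'\setminus P)\bigr)\cup\bigcup_{i=1}^r(A+B_i+g_i)=\bigl(A+(B'\setminus P)\bigr)\cup\bigcup_{i=1}^r(A+B_i)=A+B'=G,
\]
where the middle equality is \eqref{eq5.7.1} and the last equality uses $(B'\setminus P)\cup P=B'$ (because $P\subseteq B'$).

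For the counting step, note $\#(B'\setminus P)=\#B'-\sum_i\#B_i$ because the $B_i$ are disjoint subsets of $B'$, while $\#\bigcup_i(B_i+g_i)\le\sum_i\#(B_i+g_i)=\sum_i\#B_i$; hence $\#B\le\#B'$. Since $A\oplus B'=G$ gives $\#A\cdot\#B'=\#G$, this yields $\#A\cdot\#B\le\#G$. But the map $A\times B\to G$, $(a,b)\mapsto a+b$, is onto by the previous step, so $\#A\cdot\#B\ge\#G$. Hence $\#A\cdot\#B=\#G$ and that map is a bijection, i.e.\ $A\oplus B=G$.

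I do not expect a genuine obstacle here: the content is the bookkeeping for $B$ as a set (the inequality $\#B\le\#B'$ is exactly where disjointness of the $B_i$ enters) together with the elementary fact that a surjection between finite sets of equal size is a bijection. If one prefers a route that avoids counting, one can instead observe that $B_i\subseteq B'$ makes $A+B_i$ a partial direct sum, and then $A+(B_i+g_i)=A+B_i$ together with $\#(B_i+g_i)=\#B_i$ forces $A+(B_i+g_i)$ to be direct as well; the sets $C_i:=A+B_i$ are pairwise disjoint and disjoint from $A+(B'\setminus P)$ by uniqueness in $A\oplus B'=G$, so replacing each $C_i$ by $\bigsqcup_{b\in B_i+g_i}(A+b)$ and reassembling exhibits $G=\bigsqcup_{b\in B}(A+b)$ directly.
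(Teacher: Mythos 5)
Your proposal is correct, and your main argument takes a genuinely different route from the paper's. You establish the covering $A+B=G$ by distributing the sum over the decomposition $B=(B'\setminus\bigcup_iB_i)\cup\bigcup_i(B_i+g_i)$ and applying \eqref{eq5.7.1}, and then force uniqueness of representations from the cardinality bound $\#B\le\#B'$ together with $\#A\cdot\#B'=\#G$ and the fact that a surjection between finite sets of equal size is a bijection. The paper instead proves disjointness of the translates $A+b$, $b\in B$, directly: it first shows the sets $A+B_i$ are pairwise disjoint (by uniqueness in $A\oplus B'=G$), writes $G$ as the disjoint union $\bigl(A+(B'\setminus\bigcup_iB_i)\bigr)\cup\bigcup_i(A+B_i+g_i)$, and then runs a case analysis on where $b$ and $b'$ lie (one of them outside all $B_i+g_i$; in different sets $B_i+g_i$; in the same $B_i+g_i$, where one translates back by $g_i$). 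Your counting route avoids that case analysis entirely, at the mild cost of leaning on finiteness of $G$ (harmless, since $G$ is assumed finite); your sketched ``alternative route that avoids counting'' is in substance exactly the paper's argument. Both proofs read \eqref{eq5.7.2} as ``delete the $B_i$ from $B'$ and adjoin the translates $B_i+g_i$,'' which is the intended parsing and the one under which the statement is true.
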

	
	\begin{proof}
	The sets $A+B_i$ are disjoint. Indeed, if not, then there are $a,a'\in A$, $b_i\in B_i$, $b_j\in B_j$, $i\neq j$ such that $a+b_i=a'+b_j$. Because of the unique factorization, $a=a'$ and $b_i=b_j$, which contradicts the fact that $B_i$ and $B_j$ are disjoint. 
	We have the disjoint union 
	$$G=(A+(B'\setminus\cup_i B_i))\cup \bigcup_i(A+B_i)=(A+(B'\setminus \cup_i B_i))\cup\bigcup_i(A+B_i+g_i).$$
	Take $b\neq b'$ in $B$ and we want to prove that $A+b$ is disjoint from $A+b'$. If $b$ or $b'$ is in $B'\setminus\cup_i B_i$, this is clear from the hypothesis. 
	If $b$ and $b'$ lie in different sets $B_i+g_i$ this is again clear since the union above is disjoint. If $b,b'\in B_i+g_i$, then $b=c+g_i$, $b'=c'+g_i$ with $c,c'\in B_i\subset B'$, and since $b\neq b'$, we have $c\neq c'$. Therefore $A+c$ is disjoint from $A+c'$ and the same is true for $A+b=A+c+g_i$ and $A+b'=A+c'+g_i$. Thus $A\oplus B=G$.
	\end{proof}

	Szab{\'o} constructed his examples in $\bz_{m_1}\times\dots\times\bz_{m_r}$. We note here that working in the group $\bz_{m_1}\times\dots\bz_{m_r}$ with $m_1,\dots,m_r$ relatively prime is equivalent to working in $\bz_{m_1\dots m_r}$ because of the following isomorphism.

	\begin{proposition}\label{pr5.6.1}
	Let $m_1,\dots,m_r$ be relatively prime non-negative integers. Let $m=m_1\dots m_r$. 
	The map $\Psi:\bz_{m_1}\times\dots\times\bz_{m_r}\rightarrow \bz_m$,
	\begin{equation}
	\Psi(k_1,\dots,k_r)=\sum_{i=1}^rk_i\frac m{m_i}
	\label{eq5.6.1.1}
	\end{equation}
	is an isomorphism. 
	\end{proposition}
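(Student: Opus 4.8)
The plan is to verify directly that $\Psi$ is a well-defined group homomorphism, that it is injective, and then conclude surjectivity by a cardinality count. First I would check that $\Psi$ is well-defined: if $k_i \equiv k_i' \pmod{m_i}$ for each $i$, then $k_i - k_i'$ is a multiple of $m_i$, so $(k_i - k_i')\frac{m}{m_i}$ is a multiple of $m$, and hence $\sum_i k_i \frac{m}{m_i} \equiv \sum_i k_i' \frac{m}{m_i} \pmod m$. The homomorphism property is immediate from the linearity of the defining formula \eqref{eq5.6.1.1}, since addition in each $\bz_{m_i}$ and in $\bz_m$ is coordinatewise-then-reduce, and $\frac{m}{m_i}$ is a fixed integer.

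Next I would prove injectivity. Suppose $\Psi(k_1,\dots,k_r) = 0$ in $\bz_m$, i.e. $m \mid \sum_{i=1}^r k_i \frac{m}{m_i}$. Fix an index $j$. For every $i \neq j$, the term $k_i \frac{m}{m_i}$ is divisible by $m_j$, because $m_j \mid \frac{m}{m_i}$ (here I use that the $m_i$ are pairwise relatively prime, so $m_j$ divides the product of all the other $m_i$'s, which is $\frac{m}{m_j}$, and in turn divides $\frac{m}{m_i}$ whenever $i \neq j$). Since $m_j \mid m$ divides the whole sum, it follows that $m_j \mid k_j \frac{m}{m_j}$. But $\gcd\!\big(m_j, \frac{m}{m_j}\big) = 1$ by pairwise coprimality, so $m_j \mid k_j$, i.e. $k_j \equiv 0 \pmod{m_j}$. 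As $j$ was arbitrary, the kernel of $\Psi$ is trivial.

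Finally, both $\bz_{m_1}\times\dots\times\bz_{m_r}$ and $\bz_m$ have exactly $m = m_1\cdots m_r$ elements, so an injective homomorphism between them is automatically bijective; hence $\Psi$ is an isomorphism. I do not expect any real obstacle here — the only point requiring a little care is the divisibility bookkeeping in the injectivity step, where pairwise (not merely collective) coprimality of the $m_i$ is exactly what makes $\gcd(m_j, m/m_j) = 1$. This is really the classical Chinese Remainder Theorem isomorphism written additively, and one could alternatively just cite CRT; but since the explicit formula \eqref{eq5.6.1.1} for $\Psi$ will be used later, it is cleaner to record this short direct argument.
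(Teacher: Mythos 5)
Your proposal is correct and follows essentially the same route as the paper: check the homomorphism property, prove injectivity by observing that $m_j$ divides $k_i\frac{m}{m_i}$ for $i\neq j$ and that $\frac{m}{m_j}$ is invertible modulo $m_j$, then conclude surjectivity from the equality of cardinalities. The only addition is your explicit well-definedness check, which the paper leaves implicit.
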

	
	\begin{proof}
	It is clear that $\Psi$ is a morphism. We check that it is injective. If $\Psi(k_1,\dots,k_r)=0$ then $\sum_{i=1}^rk_i\frac{m}{m_i}=0$. Since $m/m_i$ is divisible by $m_j$ for all $i\neq j$, then $k_j\frac{m}{m_j}\equiv 0(\mod m_j)$. Since $\frac{m}{m_j}$ is relatively prime to $m_j$, it is invertible in $\bz_{m_j}$ so $k_j\equiv 0(\mod m_j)$. Thus $(k_1,\dots,k_r)=(0,0,\dots,0)$. Thus, $\Psi$ is injective. Since the two sets have the same cardinality, $\Psi$ is also surjective.
	\end{proof}

	\begin{example}\label{exsz}\cite{Sza85}
	Let $G$ be the direct product of the cyclic groups of orders $m_1,\dots, m_r$ and generators $g_1,\dots, g_r$ respectively. Assume $r\geq 3$. We can think of $G$ as $\bz_{m_1}\times\dots\times\bz_{m_r}$ and we can pick the generators $g_1=(1,0,\dots,0), \dots, g_r=(0,\dots,0,1)$. But we can also pick other generators, for example $g_1=(q,0\dots,0)$ where $q$ is some number which is relatively prime to $m_1$. Let $\pi$ be a permutation of the set $\{1,\dots,r\}$  that does not have cycles of length 1 or 2. (Szab{\'o} assumes $\pi$ is cyclic, and since $r\geq 3$, our condition is satisfied, but we do not need $\pi$ to be cyclic.) 
	
	If $g\in G\setminus\{0\}$ and $m$ is a positive integer which is less than or equal to the order of $g$, we denote by $[g]_m=\{0,g,2g,\dots,(m-1)g\}$. 
	
	Assume now $m_i=u_iv_i$ where $u_i,v_i$ are integers greater than one. 
	
	Obviously 
	$$G=\sum_{i=1}^r[g_i]_{m_i}=\sum_{i=1}^r([g_i]_{u_i}+[u_ig_i]_{v_i})=A\oplus B',$$
	where 
	$$A=\sum_{i=1}^r[g_i]_{u_i}\mbox{ and } B'=\sum_{i=1}^r[u_ig_i]_{v_i}.$$
	
	Then Szab{\'o} picks $B_i=[u_ig_i]_{v_i}+u_{\pi(i)}g_{\pi(i)}$ and $g_i=g_i$, as in Proposition \ref{pr5.7}. An easy check shows that 
	the sets $B_i$ are disjoint (here is where we need $\pi$ to have only cycles of length $\geq 3$) and $A+B_i+g_i=A+B_i$ (the main property used here is that $[g_i]_{u_i}+[u_ig_i]_{v_i}+g_i=[g_i]_{m_i}+g_i=[g_i]_{m_i}=[g_i]_{u_i}+[u_ig_i]_{v_i}.$ Thus, the properties in Proposition \ref{pr5.7} are satisfied and with 
	$$B=B'\cup\left(\bigcup_{i=1}^r([u_ig_i]_{v_i}+u_{\pi(i)}g_{\pi(i)}+g_i)\right)\setminus\left(\bigcup_{i=1}^r([u_ig_i]_{v_i}+u_{\pi (i)}g_{\pi(i)})\right),$$
	we have a new factorization $A\oplus B=G$. 
	
	Next, we construct spectra for the sets $A,B'$ and $B$. For this, we regard $G$ as we mentioned before: $G=\bz_{m_1}\times\dots\times\bz_{m_r}$ and  $g_1=(1,0,\dots,0), \dots, g_r=(0,\dots,0,1)$.
	 
	Then we have 
	$$A=\sum_{i=1}^r[g_i]_{u_i}=\left\{(k_1,\dots,k_r) : 0\leq k_i\leq u_i-1\mbox{ for all }i\right\},$$
	$$B'=\sum_{i=1}^r[u_ig_i]_{v_i}=\left\{((u_1k_1,\dots,u_rk_r): 0\leq k_i\leq v_i-1\right\},$$
	$$B=B'\cup\left(\bigcup_{i=1}^r([u_ig_i]_{v_i}+u_{\pi(i)}g_{\pi(i)}+g_i)\right)\setminus\left(\bigcup_{i=1}^r([u_ig_i]_{v_i}+u_{\pi (i)}g_{\pi(i)})\right)$$
	$$=\left\{((u_1k_1,\dots,u_rk_r): 0\leq k_i\leq v_i-1\right\}\cup$$$$\bigcup_{i=1}^r\left\{(0,\dots,0,k_iu_i+1,0,\dots,0,u_{\pi(i)},0,\dots,0) : 0\leq k_i\leq v_i-1\mbox{ for all }i\right\}$$
	$$\setminus \bigcup_{i=1}^r\left\{(0,\dots,0,k_iu_i,0,\dots,0,u_{\pi(i)},0,\dots,0) : 0\leq k_i\leq v_i-1\mbox{ for all }i\right\},$$
	where $k_iu_i+1$ and $k_iu_i$ are on position $i$ and $u_{\pi(i)}$ is on position $\pi(i)$.

	\begin{proposition}\label{pr5.8.1}
	The set $A$ has a spectrum (in $G$)
	$$\Lambda_A=\left\{(v_1j_1,\dots,v_rj_r) : 0\leq j_i\leq u_i-1\mbox{ for all } i\right\}.$$
	The sets $B'$ and $B$ have spectrum (in $G$)
	$$\Lambda_B=\left\{(j_1,\dots,j_r) : 0\leq j_i\leq v_i-1\mbox{ for all }i\right\}.$$
	
	Also $\Lambda_A\oplus\Lambda_B=G$. 
	\end{proposition}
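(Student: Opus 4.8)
The plan is to handle the three assertions of Proposition~\ref{pr5.8.1} in turn: the spectra of the ``box'' sets $A$ and $B'$, the stability of the spectrum $\Lambda_B$ under Szab{\'o}'s modification $B'\rightsquigarrow B$, and the complementarity $\Lambda_A\oplus\Lambda_B=G$.

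For $A$ and $B'$ I would exploit that both are Cartesian products over the cyclic factors: $A=\prod_{i=1}^r\{0,1,\dots,u_i-1\}$ and $B'=\prod_{i=1}^r\{0,u_i,2u_i,\dots,(v_i-1)u_i\}$ inside $\bz_{m_1}\times\dots\times\bz_{m_r}$, where $m_i=u_iv_i$. A Cartesian product of spectral pairs is a spectral pair, since by Proposition~\ref{pr5.5} the relevant character matrix is the Kronecker product of the factor matrices and a Kronecker product of unitaries is unitary. So it suffices to check, in each $\bz_{m_i}$, that $\{0,v_i,\dots,(u_i-1)v_i\}$ is a spectrum for $\{0,1,\dots,u_i-1\}$ and that $\{0,1,\dots,v_i-1\}$ is a spectrum for $\{0,u_i,\dots,(v_i-1)u_i\}$; both are immediate because the normalized character matrices are $\tfrac1{\sqrt{u_i}}\big(e^{2\pi i\,a(kv_i)/m_i}\big)=\tfrac1{\sqrt{u_i}}\big(e^{2\pi i ak/u_i}\big)$ and $\tfrac1{\sqrt{v_i}}\big(e^{2\pi i(u_ik)j/m_i}\big)=\tfrac1{\sqrt{v_i}}\big(e^{2\pi i kj/v_i}\big)$, i.e.\ the $u_i$- and $v_i$-point discrete Fourier transform matrices. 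Taking products gives that $\Lambda_A$ is a spectrum for $A$ (with $\#\Lambda_A=\prod u_i=\#A$) and $\Lambda_B$ is a spectrum for $B'$ (with $\#\Lambda_B=\prod v_i=\#B'$).

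For the modified set $B$, write $\chi_\lambda$ for the character of $G$ attached to $\lambda\in G\cong\widehat G$, put $\widehat{1_S}(\xi)=\sum_{s\in S}\chi_\xi(s)$, and recall from Example~\ref{exsz} that $A\oplus B=G$, so $\#B=\prod v_i=\#\Lambda_B$. By Proposition~\ref{pr5.5} (orthogonality of the columns of the character matrix, each of which has norm $\sqrt{\#B}$), it is enough to prove $\widehat{1_B}(\lambda-\lambda')=0$ for all distinct $\lambda,\lambda'\in\Lambda_B$. Since the $B_i$ are disjoint subsets of $B'$, the definition of $B$ gives $1_B=1_{B'}-\sum_{i=1}^r1_{B_i}+\sum_{i=1}^r1_{B_i+g_i}$, whence
$$\widehat{1_B}(\xi)=\widehat{1_{B'}}(\xi)+\sum_{i=1}^r\big(\chi_\xi(g_i)-1\big)\widehat{1_{B_i}}(\xi).$$
By the previous paragraph $\widehat{1_{B'}}$ already vanishes on $(\Lambda_B-\Lambda_B)\setminus\{0\}$, so the whole matter reduces to showing that each correction term $(\chi_\xi(g_i)-1)\widehat{1_{B_i}}(\xi)$ vanishes for $\xi=\lambda-\lambda'$ with $\lambda,\lambda'\in\Lambda_B$. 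Using $B_i=\{ku_ig_i+u_{\pi(i)}g_{\pi(i)}:0\le k\le v_i-1\}$ and that the $g_i$ are the standard generators, the transform factors as
$$\widehat{1_{B_i}}(\xi)=e^{2\pi i\,\xi_{\pi(i)}u_{\pi(i)}/m_{\pi(i)}}\sum_{k=0}^{v_i-1}e^{2\pi i\,\xi_ik/v_i},$$
and the geometric sum equals $v_i$ if $v_i\mid\xi_i$ in $\bz_{m_i}$ and $0$ otherwise. The key point is that, for $\xi\in\Lambda_B-\Lambda_B$, the coordinate $\xi_i$ is congruent modulo $m_i$ to an integer strictly between $-v_i$ and $v_i$, and the only such multiple of $v_i$ is $0$; hence either the geometric sum is $0$, or $\xi_i\equiv0\pmod{m_i}$ and then $\chi_\xi(g_i)=e^{2\pi i\xi_i/m_i}=1$, killing the factor $\chi_\xi(g_i)-1$. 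In either case the correction term vanishes, so $\widehat{1_B}=\widehat{1_{B'}}$ on $\Lambda_B-\Lambda_B$ and $\Lambda_B$ is a spectrum for $B$.

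Finally, $\Lambda_A\oplus\Lambda_B=G$ is verified coordinatewise: in $\bz_{m_i}$ every residue class has a unique representative of the form $v_ij_i+j_i'$ with $0\le j_i\le u_i-1$ and $0\le j_i'\le v_i-1$ (the mixed-radix/division-algorithm decomposition), so $(v_1j_1,\dots,v_rj_r)+(j_1',\dots,j_r')$ runs over $G$ exactly once. The one genuinely delicate step is the correction-term cancellation above: it is precisely the arithmetic fact that $v_i\mid\xi_i$ forces $\xi_i\equiv0$ on $\Lambda_B-\Lambda_B$ which explains why $\Lambda_B$ remains a spectrum after passing from $B'$ to $B$, and everything else is bookkeeping.
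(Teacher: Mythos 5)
Your proof is correct, and for the hardest part it takes a genuinely different route from the paper. For $A$, $B'$, and the factorization $\Lambda_A\oplus\Lambda_B=G$ you do essentially what the paper does: reduce to each cyclic factor $\bz_{m_i}$ and invoke the product structure (Proposition \ref{pr5.6}). For the modified set $B$, the paper verifies condition (iii) of Proposition \ref{pr5.5}: it fixes distinct $b,b'\in B$, splits $B$ into the pieces $B_0,\tilde B_1,\dots,\tilde B_r$, and runs a case analysis on where $b$ and $b'$ lie, using Lemma \ref{lem5.10} to annihilate the character sum over $\Lambda_B$ in each case. You instead verify condition (i): you compute $\widehat{1_B}$ on $\Lambda_B-\Lambda_B$ through the inclusion--exclusion identity $1_B=1_{B'}-\sum_i 1_{B_i}+\sum_i 1_{B_i+g_i}$ and show each correction term $(\chi_\xi(g_i)-1)\widehat{1_{B_i}}(\xi)$ vanishes, because for $\xi\in\Lambda_B-\Lambda_B$ either $v_i\nmid\xi_i$ (killing the geometric sum) or $\xi_i\equiv 0\pmod{m_i}$ (killing the factor $\chi_\xi(g_i)-1$). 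The two conditions are equivalent for the square matrix in question, and $\#B=\#B'=\#\Lambda_B$, so the reduction is legitimate; your argument avoids the case analysis entirely and makes transparent \emph{why} Szab\'o's swap $B'\rightsquigarrow B$ preserves the spectrum, namely that $\widehat{1_B}$ and $\widehat{1_{B'}}$ coincide on $\Lambda_B-\Lambda_B$. The one step you pass over too quickly is the indicator identity itself: it needs not only that the $B_i$ are disjoint subsets of $B'$, but also that the translates $B_i+g_i$ are pairwise disjoint and disjoint from $B'\setminus\bigcup_j B_j$. This is true --- it follows from the proof of Proposition \ref{pr5.7} (the sets $A+B_i=A+B_i+g_i$ are pairwise disjoint and $0\in A$), and the paper records it as the disjoint union $B=B_0\cup\bigcup_i\tilde B_i$ --- but it should be stated rather than folded into ``the definition of $B$ gives.''
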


	\begin{proof}
	For the set $A$, we check that $\{0,v_i,2v_i, \dots,(u_i-1)v_i\}$ in $\bz_{m_i}$ has spectrum $\{0,1,\dots,u_i-1\}$. Indeed 
	$$\sum_{j=0}^{u_i-1}e^{2\pi i\frac{(k-k')jv_i}{u_iv_i}}=u_i\delta_{kk'}\mbox{ for all }k,k'\in\{0,1,\dots,u_i-1\}.$$
	Then, the result follows from Proposition \ref{pr5.6}. For $B'$, a similar argument can be used. 
	
	For $B$ we will use a lemma:
	\begin{lemma}\label{lem5.10}
	Given $(k_1,\dots, k_r)\in\bz_{m_1}\times\dots\times\bz_{m_r}$, if one of the $k_i$ is a non-zero multiple of $u_i$, then 
	$$\sum_{l_1=0}^{v_1-1}\dots\sum_{l_r=0}^{v_r-1}e^{2\pi i\frac{l_1k_1}{m_1}}\dots e^{2\pi i\frac{l_rk_r}{m_r}}=0.$$
\end{lemma}

\begin{proof}
The sum splits into the product of the sums $\sum_{l_i=0}^{v_i-1}e^{2\pi i\frac{l_ik_i}{u_iv_i}}$, and if $k_i=ku_i$, $0<k<v_i$ then we further get 
$\sum_{l_i=0}^{v_i-1}e^{2\pi i\frac{l_ik}{v_i}}=0$.
\end{proof}

Now, take two distinct points  $b=(b_1,\dots,b_r)\neq b'=(b_1',\dots,b_r')$ in $B$. We want to prove that 

\begin{equation}
\sum_{(l_1,\dots, l_r)\in\Lambda_B}e^{2\pi i\frac{l_1(b_1-b_1')}{m_1}}\dots e^{2\pi i\frac{l_r(b_r-b_r')}{m_r}}=0.
\label{eq5.9.1}
\end{equation}

We denote 
$$B_0=\left\{(u_1k_1,\dots,u_rk_r): 0\leq k_i\leq v_i-1\right\}\setminus$$$$\bigcup_{i=1}^r\left\{(0,\dots,0,k_iu_i,0,\dots,0,u_{\pi(i)},0,\dots,0) : 0\leq k_i\leq v_i-1\mbox{ for all }i\right\},$$
$$B_i=\left\{(0,\dots,0,k_iu_i,0,\dots,0,u_{\pi(i)},0,\dots,0) : 0\leq k_i\leq v_i-1\mbox{ for all }i\right\},$$
$$\tilde B_i=\left\{(0,\dots,0,k_iu_i+1,0,\dots,0,u_{\pi(i)},0,\dots,0) : 0\leq k_i\leq v_i-1\mbox{ for all }i\right\}.$$
So $B=B_0\cup\cup_{i=1}^r \tilde B_i$, disjoint union.

If $b,b'\in B_0$, then the result follows from the fact that $\Lambda_B$ is a spectrum for $B'$, so $B'$ is a spectrum for $\Lambda_B$. If $b\in B_0$ and $b'\in\tilde B_i$, then $b'$ is of the form 
$b'=(0,\dots,0, k_i'u_i+1,0,\dots, 0, u_{\pi(i)},0,\dots, 0)$ and $b$ is of the form $b=(k_1u_1,\dots,k_ru_r)$.

 If one of the $k_1,k_2,\dots,k_{i-1},k_{i+1},\dots, k_{\pi(i)-1},k_{\pi(i)+1},\dots, k_r$ is non-zero, then we apply Lemma \ref{lem5.10} to $b-b'$ and obtain \eqref{eq5.9.1}. 

If all these are zero, then, if $k_{\pi(i)}\neq 1$, again we use Lemma \ref{lem5.10} and obtain \eqref{eq5.9.1}. If $k_{\pi (i)}=1$, then that means that $b\in B_i$, a contradiction. 

If $b'\in\tilde B_i$, and $b$ is in a set $\tilde B_j$ with $j\neq i$, then $b$ is of the form $b=(0,\dots,0, k_ju_j+1,0,\dots, 0, u_{\pi(j)},0,\dots, 0)$. If $\pi(i)= j$ then $\pi(j)\neq i$ (becasue $\pi$ has no cycles of length 2), and $\pi(j)\neq\pi(i)$. Therefore, applying Lemma \ref{lem5.10} to $b-b'$, using the $\pi(j)$ component, we get \eqref{eq5.9.1}.

If $\pi(i)\neq j$, then since $\pi(i)\neq \pi(j)$, we can use the $\pi(i)$ component in Lemma \ref{lem5.10} for $b-b'$ and again obtain \eqref{eq5.9.1}.

Finally, if $b,b'\in \tilde B_i$, then $b-b'\in (B'-B')\setminus\{0\}$ and the result follows, since $\Lambda_B$ is a spectrum for $B'$.

The factorization $\Lambda_A\oplus\Lambda_B=G$ is obvious. 
	\end{proof}
	
	Note that the factorization $\Lambda_A\oplus\Lambda_B=G$ is completely analogous to the factorization $A\oplus B'=G$; the roles of $u_i$ and $v_i$ are interchanged. Therefore, we can perform the same type of operations on $\Lambda_A$ as we did on $B'$ and get new factorizations and spectra. 
	
	So, let us consider a permutation $\sigma$ of $\{1,\dots, r\}$ which has only cycles of length $\geq 3$. Define 
	$$\Lambda_A'=\Lambda_A\cup\left(\bigcup_{i=1}^r([v_ig_i]_{u_i}+v_{\sigma(i)}g_{\sigma(i)})	+g_{\sigma(i)}\right)\setminus\left(\bigcup_{i=1}^r([v_ig_i]_{u_i}+v_{\sigma(i)}g_{\sigma(i)})\right).$$
	
	\begin{proposition}\label{pr5.11}
	$\Lambda_A'$ is a spectrum of $A$ and $\Lambda_A'\oplus\Lambda_B=G$. 
	\end{proposition}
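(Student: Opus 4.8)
The plan is to exploit the symmetry noted just before the statement: by Proposition~\ref{pr5.8.1}, the factorization $\Lambda_A\oplus\Lambda_B=G$ has literally the same shape as the factorization $A\oplus B'=G$ of Example~\ref{exsz}, with the roles of the $u_i$ and the $v_i$ interchanged. Indeed $\Lambda_B=\sum_{i=1}^r[g_i]_{v_i}$ plays the role of $A=\sum_{i=1}^r[g_i]_{u_i}$, and $\Lambda_A=\sum_{i=1}^r[v_ig_i]_{u_i}$ plays the role of $B'=\sum_{i=1}^r[u_ig_i]_{v_i}$; moreover $\Lambda_A'$ is obtained from $\Lambda_A$ by exactly the operation that produced $B$ from $B'$, with the permutation $\sigma$ in place of $\pi$. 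So I would prove both assertions by applying to this ``swapped'' data the very arguments already carried out in Example~\ref{exsz} and in Proposition~\ref{pr5.8.1}.

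First I would establish $\Lambda_A'\oplus\Lambda_B=G$. This is an instance of Proposition~\ref{pr5.7}, taken with $\Lambda_B$ in the role of the fixed factor, the translated blocks $[v_ig_i]_{u_i}+v_{\sigma(i)}g_{\sigma(i)}$ of $\Lambda_A$ in the role of the sets $B_i$, and the generators $g_i$ as the shift elements (just as $B_i=[u_ig_i]_{v_i}+u_{\pi(i)}g_{\pi(i)}$ with shift $g_i$ were used in Example~\ref{exsz}). The two hypotheses of Proposition~\ref{pr5.7} are verified exactly as there: these translated slices of $\Lambda_A$ are pairwise disjoint precisely because $\sigma$ has no cycle of length $\le 2$, and each satisfies the invariance requirement \eqref{eq5.7.1} because $[g_i]_{v_i}+[v_ig_i]_{u_i}=[g_i]_{m_i}$ is the full cyclic subgroup generated by $g_i$ and hence is unchanged by translation by $g_i$.

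For ``$\Lambda_A'$ is a spectrum of $A$'', I would invoke Proposition~\ref{pr5.8.1} for the swapped data. Its conclusion that $B'$ and $B$ both admit the spectrum $\Lambda_B=\{(j_1,\dots,j_r):0\le j_i\le v_i-1\}$ becomes, after the exchange $u_i\leftrightarrow v_i$, the statement that $\Lambda_A$ and $\Lambda_A'$ both admit the spectrum $\{(j_1,\dots,j_r):0\le j_i\le u_i-1\}$. But that set is exactly $A$. Hence $A$ is a spectrum of $\Lambda_A'$, and since the spectral relation is symmetric (Proposition~\ref{pr5.5}(ii), as was already used in the proof of Proposition~\ref{pr5.8.1}), $\Lambda_A'$ is a spectrum of $A$. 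The cardinality condition $\#\Lambda_A'=\#A$ follows from $\Lambda_A'\oplus\Lambda_B=G$ together with $\#\Lambda_A=\#A$, and $\Lambda_A'\oplus\Lambda_B=G$ is the assertion already established.

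The computations are not the obstacle: the verification of the hypotheses of Proposition~\ref{pr5.7} and the character-sum estimate behind Proposition~\ref{pr5.8.1} are, word for word, those carried out earlier with $u_i$ and $v_i$ swapped. The point requiring care is the bookkeeping of that interchange, together with noticing that the hypothesis ``$\sigma$ has no cycle of length $\le 2$'' is exactly what is needed both for the disjointness above and, in the character-sum argument, for the case in which the two frequencies lie in two distinct translated slices of $\Lambda_A$. If a self-contained argument for the spectrum claim were preferred, the same case distinction can be run directly, checking that every nonzero element of $\Lambda_A'-\Lambda_A'$ lies in the zero set $\{(\xi_1,\dots,\xi_r):v_i\mid\xi_i\ \text{and}\ m_i\nmid\xi_i\ \text{for some}\ i\}$ of the Fourier transform of the indicator function of $A$.
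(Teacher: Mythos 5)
Your proof is correct and is exactly the argument the paper intends: Proposition \ref{pr5.11} is stated without a proof of its own, the authors relying on precisely the $u_i\leftrightarrow v_i$ symmetry you spell out (Proposition \ref{pr5.7} for the factorization $\Lambda_A'\oplus\Lambda_B=G$, and the swapped Proposition \ref{pr5.8.1} together with the symmetry of the spectral relation for the spectrum claim). One remark: your verification of \eqref{eq5.7.1} uses the shift $g_i$, whereas the displayed definition of $\Lambda_A'$ reads $+g_{\sigma(i)}$; the latter appears to be a typo, since with shift $g_{\sigma(i)}$ the required invariance $\Lambda_B+\bigl([v_ig_i]_{u_i}+v_{\sigma(i)}g_{\sigma(i)}\bigr)+g_{\sigma(i)}=\Lambda_B+\bigl([v_ig_i]_{u_i}+v_{\sigma(i)}g_{\sigma(i)}\bigr)$ fails in the $\sigma(i)$-th coordinate, and your reading with $g_i$ is the correct analogue of the construction of $B$ from $B'$.
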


	\begin{proposition}\label{pr5.12} Let $m=m_1\dots m_r$.
	With the isomorphism $\Phi$ in Proposition \ref{pr5.6.1}, 
	the image of the sets $A,B',B,\Lambda_A,\Lambda_A'$ and $\Lambda_B$ in $\bz_{m}$ have the CM-property.
	\end{proposition}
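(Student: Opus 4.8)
The plan is to pass everything to mask polynomials modulo $x^m-1$ and to divisibility by cyclotomic polynomials, to handle the four ``box'' sets $A,B',\Lambda_A,\Lambda_B$ uniformly, and then to deduce the ``Szabó‑modified'' sets $B,\Lambda_A'$ from $B',\Lambda_A$.

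First, each of the six sets tiles $G\cong\bz_m$ (by Propositions \ref{pr5.7}, \ref{pr5.8.1}, \ref{pr5.11}), hence tiles $\bz$ (Proposition \ref{pr3.2}), hence satisfies (T1) by Theorem \ref{th4.13}. So only (T2) must be checked, and for that I first identify, for each set $X$, the set $S_{\Psi(X)}$ of prime powers $s$ with $\Phi_s\mid\Psi(X)(x)$. The key is that evaluating a mask polynomial at an $m$‑th root of unity is a character sum on $\bz_m$, and under the isomorphism $\Psi$ of Proposition \ref{pr5.6.1} the character $\xi\mapsto e^{2\pi i\xi/m}$ corresponds to the product character $(k_1,\dots,k_r)\mapsto\prod_i e^{2\pi ik_i/m_i}$ with $\xi\equiv k_i\pmod{m_i}$; hence for a product set $X=\prod_iE_i$, $E_i\subseteq\bz_{m_i}$, we get $\Psi(X)(x)\equiv\prod_iE_i(x^{m/m_i})\pmod{x^m-1}$, and for a prime power $s\mid m$ with \emph{home index} $i_0$ (the unique $i_0$ with $s\mid m_{i_0}$) and a primitive $s$‑th root $\zeta_s$ one has $\zeta_s^{m/m_i}=1$ for $i\neq i_0$, while $\zeta_s^{m/m_{i_0}}$ is again a primitive $s$‑th root.

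\emph{The box sets.} Each of $A,B',\Lambda_A,\Lambda_B$ is $\prod_iE_i$ with $E_i$ an interval $\{0,\dots,\ell_i-1\}$ or an arithmetic progression $d_i\{0,\dots,\ell_i-1\}$, $d_i\ell_i\mid m_i$; such an $E_i$ has the CM property in $\bz$ since its mask polynomial is $(x^{d_i\ell_i}-1)/(x^{d_i}-1)$, a product of distinct cyclotomic polynomials. From the paragraph above, $\Psi(X)(\zeta_s)=\bigl(\prod_{i\neq i_0}\#E_i\bigr)E_{i_0}(\zeta_s^{m/m_{i_0}})$ for $s\mid m$ with home $i_0$, so $\Phi_s\mid\Psi(X)(x)\iff\Phi_s\mid E_{i_0}(x)$; combined with (T1) and the identity $\prod_{s\in\bigcup_iS_{E_i}}\Phi_s(1)=\prod_i\#E_i=\#X$, this forces $S_{\Psi(X)}=\bigcup_iS_{E_i}$, a disjoint union by coprimality and with no prime powers outside $m$. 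For (T2): given prime powers $s_1,\dots,s_k\in S_{\Psi(X)}$ of distinct primes, put $N=\prod_js_j$ and group the $s_j$ by home index; if $i$ is a home index occurring, with $N_i:=\prod_{\operatorname{home}(s_j)=i}s_j>1$, then $\zeta_N^{m/m_i}$ is a primitive $N_i$‑th root and $\Phi_{N_i}\mid E_i(x)$ by (T2) for $E_i$, so $E_i(\zeta_N^{m/m_i})=0$ and hence $\Phi_N\mid\Psi(X)(x)$. (This is, in effect, a ``product of CM sets over coprime moduli is CM'' lemma.)

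\emph{The modified sets.} By the symmetry between $(\Lambda_A,\Lambda_A')$ and $(B',B)$ noted after Proposition \ref{pr5.8.1}, it suffices to treat $B$. Writing $B=B'\cup\bigcup_i\tilde B_i\setminus\bigcup_iB_i$ with $\tilde B_i=B_i+g_i$ and $\Psi(g_i)=m/m_i$ gives
\[
\Psi(B)(x)\equiv\Psi(B')(x)+\sum_{i=1}^r\bigl(x^{m/m_i}-1\bigr)\Psi(B_i)(x)\pmod{x^m-1},
\]
while the identity $A+B_i+g_i=A+B_i$ of \eqref{eq5.7.1}, together with $A\oplus B_i$ being direct (as $B_i\subseteq B'$), yields $\bigl(x^{m/m_i}-1\bigr)\Psi(A)(x)\Psi(B_i)(x)\equiv0\pmod{x^m-1}$ for each $i$. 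I first pin down $S_{\Psi(B)}$. By Proposition \ref{pr3.2}(iv), every prime power $s\mid m$, $s>1$, with $s\notin S_{\Psi(A)}$ lies in $S_{\Psi(B)}$; by the box analysis the set of such $s$ is exactly $S_{\Psi(B')}$. Conversely, for $s\in S_{\Psi(A)}$ with home $i_0$, evaluating the displayed formula gives $\Psi(B)(\zeta_s)=v_{i_0}\bigl(\prod_{j\neq i_0}v_j-1+\zeta_s^{m/m_{i_0}}\bigr)$, which is nonzero because $\prod_{j\neq i_0}v_j\geq 2^{r-1}\geq4$ and $|\zeta_s^{m/m_{i_0}}-1|\leq2$; hence $S_{\Psi(B)}\cap S_{\Psi(A)}=\emptyset$. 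Since $S_{\Psi(A)}$ and $S_{\Psi(B')}$ partition the prime‑power divisors of $m$ and $\prod_{s\in S_{\Psi(B')}}\Phi_s(1)=\#B'=\#B$, the already‑known (T1) for $\Psi(B)$ excludes any prime power not dividing $m$, so $S_{\Psi(B)}=S_{\Psi(B')}$. Finally (T2): for prime powers $s_1,\dots,s_k\in S_{\Psi(B)}=S_{\Psi(B')}$ of distinct primes and $N=\prod_js_j$, we have $\Psi(B')(\zeta_N)=0$ (box case), while $\Psi(A)(\zeta_N)\neq0$, because $s_j\notin S_{\Psi(A)}$ means $s_j\nmid u_{\operatorname{home}(s_j)}$, so no nonempty home‑product divides the corresponding $u_i$, hence $\Phi_N\nmid\Psi(A)(x)$; therefore $\bigl(\zeta_N^{m/m_i}-1\bigr)\Psi(B_i)(\zeta_N)=0$ for every $i$, and the displayed formula gives $\Psi(B)(\zeta_N)=\Psi(B')(\zeta_N)=0$, i.e.\ $\Phi_N\mid\Psi(B)(x)$.

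The routine part is the Chinese‑remainder bookkeeping (home indices, orders of $\zeta_s^{m/m_i}$). The real obstacle is the determination of $S_{\Psi(B)}$, where one must combine Proposition \ref{pr3.2}(iv) (giving $S_{\Psi(B)}\supseteq S_{\Psi(B')}$), the explicit evaluation ruling out $S_{\Psi(B)}\cap S_{\Psi(A)}$, and the $\#B=\#B'$ counting identity via (T1) (ruling out prime powers outside $m$); and then the observation $\Psi(A)(\zeta_N)\neq0$ for the relevant $N$, which is exactly what makes the Szabó swap invisible to the (T2) condition inherited from $B'$.
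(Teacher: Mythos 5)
Your proof is correct, and for the four product sets $A,B',\Lambda_A,\Lambda_B$ it is essentially the paper's argument in a different dress: where you evaluate $\Psi(X)(x)\equiv\prod_i E_i(x^{m/m_i})$ at primitive roots of unity and sort prime powers by home index, the paper expands the same product into cyclotomic factors via $\Phi_d(x^t)=\prod_{r\mid t}\Phi_{rd}(x)$ (Proposition \ref{pr2.15}) and reads off $S_A$ and (T2) from the factorization; these are the same computation. The genuine divergence is in the treatment of the modified sets $B$ and $\Lambda_A'$. The paper never touches the internal structure of $B$ at all: it shows that for $s=s_1\cdots s_n$ with $s_j\in S_B$ one has $\Phi_s\nmid A(x)$, and then invokes Proposition \ref{pr3.2}(iv) for the factorization $A\oplus B=\bz_m$ to conclude $\Phi_s\mid B(x)$ --- one line, handling $B$ and $B'$ simultaneously. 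You instead derive the perturbation identity $\Psi(B)(x)\equiv\Psi(B')(x)+\sum_i(x^{m/m_i}-1)\Psi(B_i)(x)$, kill the correction terms at $\zeta_N$ using $(x^{m/m_i}-1)\Psi(A)\Psi(B_i)\equiv 0$ together with $\Psi(A)(\zeta_N)\neq 0$, and pin down $S_{\Psi(B)}$ with the explicit estimate $\lvert\zeta_s^{m/m_{i_0}}-1\rvert\le 2<\prod_{j\neq i_0}v_j$ (which, note, is where $r\ge 3$ enters). This is all valid, but the disjointness $S_{\Psi(B)}\cap S_{\Psi(A)}=\emptyset$ that you establish by that estimate is already immediate from Lemma \ref{lemco2.1} applied to $A\oplus B=\bz_m$, and the entire perturbation computation can be short-circuited by the same appeal to Proposition \ref{pr3.2}(iv) that you use for the inclusion $S_{\Psi(B)}\supseteq S_{\Psi(B')}$. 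What your longer route buys is an explicit explanation of \emph{why} the Szab\'o swap is invisible to the cyclotomic divisors --- the swapped pieces $B_i$ are annihilated by $x^{m/m_i}-1$ against $A$ precisely at those roots of unity where $A$ does not vanish --- a mechanism the paper's complement argument conceals.
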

	
	\begin{proof}
	We have 
	$$\Psi(A)=\left\{\sum_{i=1}^rk_i\frac{m}{m_i} : 0\leq k_i\leq u_i-1\right\}.$$
	We use the notation 
	$$Q_n(x)=1+x+\dots+x^{n-1}.$$
	We have $$A(x)=\prod_{i=1}^rQ_{u_i}\left(x^{\frac{m}{m_i}}\right).$$
	Then, with Proposition \ref{pr2.15}(ii) and (vi) (since $u_i$ and $\frac{m}{m_i}$ are mutually prime), we get 
	$$A(x)=\Psi(A)(x)=\prod_{i=1}^r\prod_{d|u_i, d>1}\Phi_{u_i}\left(x^{\frac{m}{m_i}}\right)=\prod_{i=1}^r\prod_{d|u_i, d>1}\prod_{t|\frac{m}{m_i}}\Phi_{dt}(x).$$
	
	If $p^\alpha$, $\alpha>0$ is a prime power such that $\Phi_{p^\alpha}(x)$ divides $A(x)$, then there exist $i\in\{1,\dots,r\}$, $d>1$, $d|u_i$ and $t|\frac{m}{m_i}$ such that $p^{\alpha}=dt$. Since $d$ and $t$ are relatively prime and $d>1$, we must have $p^\alpha=d$. Thus, $S_A$ consists of the prime powers that divide one of the $u_i$. 
	
	Take $p_1^{\alpha_1},\dots, p_n^{\alpha_n}$ in $S_A$. Then $p_1^{\alpha_1}$ divides one of the $u_i$. By relabeling, let us assume $p_1^{\alpha_1},\dots,p_j^{\alpha_j}$ divide $u_i$ and $p_{j+1}^{\alpha_{j+1}},\dots,p_n^{\alpha_n}$ do not, hence they divide some $u_{i_{j+1}},\dots, u_{i_n}$, respectively, different than $u_i$. Then $d=p_1^{\alpha_1}\dots p_j^{\alpha_j}>1$ and $d|u_i$. Also $t=p_{j+1}^{\alpha_{j+1}}\dots p_n^{\alpha_n}$ divides $\frac{m}{m_i}$ therefore $\Phi_{dt}(x)$ divides $A(x)$ and so $A$ has the CM-property.

	For the sets $B'$ and $B$, first note, from Lemma \ref{lemco2.1}, that $S_B=S_{B'}$ consists of prime powers $p^{\alpha}$, $\alpha>0$ which divide $m$ but do not divide any of the $u_i$. 
	
	Take $s_1,\dots,s_n$ prime powers in $S_B$. We will show that, for $s=s_1\dots s_n$, $\Phi_s(x)$ does not divide $A(x)$, so, by Proposition \ref{pr3.2}, it has to divide $B(x)$ and $B'(x)$, which will mean that $B$ and $B'$ satisfy the CM-property. 
	
	Suppose not, so $\Phi_s(x)$ divides $A(x)$, then there exists $i\in\{1,\dots,r\}$, $d>1$, $d|u_i$ and $t|\frac{m}{m_i}$ such that $s=dt$. Each of the $s_k$ divides exactly one of the $m_j$.
	By relabeling, suppose $s_1,\dots,s_j$ are the prime powers that divide $m_i$ and $s_{j+1},\dots,s_n$ do not. Then $d=s_1\dots s_j$. But, since $d|u_i$ it means that $s_1, \dots, s_j$ divide $u_i$, a contradiction. Therefore, none of the $s_k$ divide $m_i$ so all of them divide $\frac{m}{m_i}$. But then, since $s=dt$, it follows that $d=1$, again a contradiction. Thus, $\Phi_s(x)$ does not divide $A(x)$ so it has to divide $B(x)$ and $B'(x)$, which implies that $B$ and $B'$ have the CM-property. 
	
	To show that the $\Lambda$ sets have the CM-property is completely analogous. 
	
	\end{proof}
	\end{example}

	\begin{example}\label{ex5.13}
	This example will show that it is possible for the cyclotomic polynomials to have multiplicity in a tile. They can appear with multiplicity in $A(x)$ and they can appear in both $A(x)$ and $B(x)$. However, from Lemma \ref{lemco2.1}, we know that this is not possible for $\Phi_s(x)$ when $s$ is a prime power. 
	
	Take $m_1=4$, $m_2=9$, $u_1=v_1=2$, $u_2=v_2=3$. Then 
	$$\Psi(A)=\left\{9a+4b : a\in\{0,1\}, b\in\{0,1,2\}\right\},\quad \Psi(B')=\left\{18a+12b : a\in\{0,1\}, b\in\{0,1,2\}\right\}.$$
	
	As in the proof of Proposition \ref{pr5.12}, we have 
	$$A(x)=(1+x^9)(1+x^4+x^8)=\Phi_2(x)\Phi_6(x)\Phi_{18}(x)\Phi_3(x)\Phi_6(x)\Phi_{12}(x).$$
	
	Also, with Lemma \ref{pr2.15},
	$$B'(x)=(1+x^{18})(1+x^{12}+x^{24})=Q_2(x^{18})Q_3(x^{12})=Q_2((x^2)^9)Q_3((x^3)^4)$$$$=\Phi_2(x^2)\Phi_6(x^2)\Phi_{18}(x^2)\Phi_3(x^3)\Phi_6(x^3)\Phi_{12}(x^3)
	=\Phi_4(x)\Phi_{12}(x)\Phi_{36}(x)\Phi_9(x)\Phi_{18}(x)\Phi_{36}(x).$$

	\end{example}

\section{Some general constructions}

\begin{proposition}\label{pr5.0}
Let $A$ be a set of non-negative integers. If $A$ is a complete set of representatives modulo $N$, then $A$ has the CM-property. 
\end{proposition}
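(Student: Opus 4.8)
The plan is to verify directly that $A$ satisfies conditions (T1) and (T2) of Theorem \ref{thcm}; the only real issue is pinning down $S_A$, and this is done after recording a divisibility identity. Let $N$ be the modulus. Since the reduction map $A\to\{0,1,\dots,N-1\}$ is a bijection, $A(x)\equiv 1+x+\cdots+x^{N-1}\pmod{x^N-1}$ (this is also Proposition \ref{pr3.2} with $B=\{0\}$). As $x^N-1=\prod_{d\mid N}\Phi_d(x)$ and $\Phi_1(x)=x-1$, we have $1+x+\cdots+x^{N-1}=\prod_{d\mid N,\,d>1}\Phi_d(x)$. Writing $A(x)=\big(1+x+\cdots+x^{N-1}\big)+(x^N-1)g(x)$ with $g\in\bz[x]$, and noting that $\Phi_d(x)$ divides both summands whenever $d\mid N$, $d>1$, we obtain
\begin{equation}
\Phi_d(x)\mid A(x)\qquad\text{for every divisor }d>1\text{ of }N.
\label{eq:pr5.0}
\end{equation}
In particular every prime-power divisor of $N$ lies in $S_A$; also $A\oplus N\bz=\bz$, so $A$ is a tile.

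\emph{Determining $S_A$.} The claim is that $S_A=\{p^\alpha:\ p^\alpha\mid N,\ \alpha\ge1\}$. The inclusion $\supseteq$ is \eqref{eq:pr5.0}. For $\subseteq$: since $A$ is a tile, Theorem \ref{th4.13} gives (T1), so $\prod_{s\in S_A}\Phi_s(1)=A(1)=\#A=N$; as $\Phi_{p^\alpha}(1)=p$ for each prime power (Proposition \ref{pr2.15}), the left-hand product already equals $N$ over the prime-power divisors of $N$ alone, so it can contain no further prime power. (If one wishes to avoid Theorem \ref{th4.13}: for $p^\alpha\nmid N$ and $\zeta$ a primitive $p^\alpha$-th root of unity, $\zeta^N\neq1\neq\zeta$, and from $(x-1)A(x)=(x^N-1)\big(1+(x-1)g(x)\big)$ one sees $A(\zeta)=0$ would force $(\zeta-1)g(\zeta)=-1$ with $g(\zeta)\in\bz[\zeta]$, which is impossible because $1-\zeta$ is not a unit of $\bz[\zeta]$, its norm being $\Phi_{p^\alpha}(1)=p$.)

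\emph{Conclusion.} (T1) is exactly the relation used above — or directly, $\prod_{s\in S_A}\Phi_s(1)=\prod_{p^\alpha\mid N}p=N=A(1)$. For (T2), if $s_1,\dots,s_m\in S_A$ are powers of distinct primes, then each $s_i\mid N$, hence $s_1\cdots s_m\mid N$ (the primes being distinct) and $s_1\cdots s_m>1$, so $\Phi_{s_1\cdots s_m}(x)\mid A(x)$ by \eqref{eq:pr5.0}. Thus $A$ satisfies (T1) and (T2), i.e. $A$ has the CM-property. The substantive step is the inclusion $S_A\subseteq\{p^\alpha:p^\alpha\mid N\}$ — ruling out prime-power cyclotomic factors of $A(x)$ that do not arise from divisors of $N$; everything else is routine bookkeeping with cyclotomic factorizations.
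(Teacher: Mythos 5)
Your proof is correct and follows essentially the same route as the paper: both use Proposition \ref{pr3.2} with $B=\{0\}$ to write $A(x)=(1+x+\cdots+x^{N-1})+(x^N-1)g(x)$, deduce that $\Phi_d(x)\mid A(x)$ for every divisor $d>1$ of $N$, identify $S_A$ as exactly the prime-power divisors of $N$ via the (T1) identity from Theorem \ref{th4.13}, and then read off (T2). Your parenthetical alternative ruling out extraneous prime powers in $S_A$ directly (via the non-unit $1-\zeta$ in $\bz[\zeta]$, of norm $p$) is a correct and self-contained bonus that avoids invoking Theorem \ref{th4.13}, but it does not change the overall structure of the argument.
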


\begin{proof}
We have $A\oplus\{0\}=\bz_N$. By Proposition \ref{pr3.2}, there is an integer polynomial such that 
\begin{equation}
A(x)=1+x+\dots+x^{N-1}+(x^N-1)Q(x).
\label{eq5.0.1}
\end{equation}

Since $A$ is a tile, it satisfies (T1), by Theorem \ref{th4.13}. So 
$$N=\#A=\prod_{s\in S_A}\Phi_s(1).$$
Since $\Phi_s(1)=p$ if $s=p^\alpha$ (by Proposition \ref{pr2.15}), it follows that the prime numbers that appear in the prime powers in $S_A$ divide $N$ and also, all prime powers that divide $N$ must appear in $S_A$, otherwise 
$$N=\#A>\prod_{s\in S_A}\Phi_s(1).$$
Thus, $S_A$ consists of all prime powers that divide $N$ so (T2) is also satisfied, because if $s_1,\dots,s_n$ are powers of distinct primes in $S_A$ then $s=s_1\dots s_n$ divides $N$ so $\Phi_s(x)$ divides $A(x)$, by \eqref{eq5.0.1}.
\end{proof}

		\begin{proposition}\label{pr5.1}
	Let $A$ be a finite set of non-negative integers which has a spectrum $\Lambda$ in $\frac{1}{N}\bz$. If $r$ is relatively prime to $N$, then $r\Lambda$ is a spectrum for $A$.
	\end{proposition}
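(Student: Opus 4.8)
The plan is to check directly, using Proposition~\ref{pr5.5}, that the $(\#A)\times(\#A)$ matrix $\frac{1}{\sqrt{\#A}}\bigl(e^{2\pi i a(r\lambda)}\bigr)_{a\in A,\,\lambda\in\Lambda}$ is unitary. Since $r$ is invertible modulo $N$ and $\Lambda\subset\frac1N\bz$, the elements $r\lambda$ ($\lambda\in\Lambda$) remain distinct modulo $1$, so $r\Lambda$ is genuinely a set of cardinality $\#A$. Each column of the matrix has norm $1$ automatically, so everything comes down to showing that for distinct $\lambda,\lambda'\in\Lambda$ one has $\sum_{a\in A}e^{2\pi i ar(\lambda-\lambda')}=0$; that is, writing $A(x)=\sum_{a\in A}x^a$, that $A\bigl(e^{2\pi i r(\lambda-\lambda')}\bigr)=0$, given the hypothesis that $A\bigl(e^{2\pi i(\lambda-\lambda')}\bigr)=0$.

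Next I would exploit that $\lambda-\lambda'\in\frac1N\bz$: write $\lambda-\lambda'=\ell/N$ with $\ell\in\bz$ and put $\zeta=e^{2\pi i\ell/N}$, an $N$-th root of unity, say a primitive $d$-th root of unity with $d=N/\gcd(\ell,N)$. Then $e^{2\pi i r(\lambda-\lambda')}=\zeta^r$, and since $\gcd(r,N)=1$ we have $\gcd(r\ell,N)=\gcd(\ell,N)$, so $\zeta^r$ is again a primitive $d$-th root of unity. Thus the problem reduces to the following: if the integer polynomial $A(x)$ vanishes at one primitive $d$-th root of unity, then it vanishes at every primitive $d$-th root of unity.

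This last step is the heart of the matter, and it is precisely the irreducibility of the cyclotomic polynomial $\Phi_d$ over $\bq$: from $A(\zeta)=0$ we get $\Phi_d(x)\mid A(x)$ in $\bq[x]$, and since $\zeta^r$ is also a root of $\Phi_d$, it is a root of $A$. Equivalently, in Galois-theoretic terms, the automorphism $\sigma$ of $\bq\bigl(e^{2\pi i/N}\bigr)/\bq$ determined by $e^{2\pi i/N}\mapsto e^{2\pi i r/N}$ (which exists exactly because $\gcd(r,N)=1$) satisfies $\sigma(\zeta)=\zeta^r$, and applying $\sigma$ to $A(\zeta)=0$ yields $A(\zeta^r)=\sigma\bigl(A(\zeta)\bigr)=0$ because $A$ has rational coefficients. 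Collecting these facts, the matrix above is unitary, so $r\Lambda$ is a spectrum for $A$.

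I do not anticipate a genuine obstacle; the one place demanding care is the reduction to a divisibility (or Galois-orbit) statement about the \emph{integer} polynomial $A(x)$ --- it is the rationality of its coefficients that licenses the cyclotomic argument --- together with the minor bookkeeping that passing from $\Lambda$ to $r\Lambda$ neither merges nor loses points modulo $1$.
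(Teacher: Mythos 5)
Your proposal is correct and follows essentially the same route as the paper's proof: reduce to showing $A$ vanishes at $e^{2\pi i r(\lambda-\lambda')}$, note that this is a primitive root of unity of the same order $s\mid N$ as $e^{2\pi i(\lambda-\lambda')}$ because $\gcd(r,s)=1$, and invoke the irreducibility of $\Phi_s$ (the paper says ``minimal polynomial'') to conclude. Your extra check that $r\Lambda$ neither merges nor loses points is a small bookkeeping point the paper leaves implicit.
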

	
	\begin{proof}
	Let $\frac\lambda N\neq \frac{\lambda'} N$ in $\Lambda$ with $\lambda,\lambda'\in\bz$. Then $A(e^{2\pi i\frac{\lambda-\lambda'}N})=0$. Let $\frac{\lambda-\lambda'}N=\frac ks$ with $k,s\in\bz$, $(k,s)=1$. Then $\Phi_s(z)$ divides $A(z)$ since it is the minimal polynomial for $e^{2\pi i\frac ks}$. Since $r$ is relatively prime to $N$, it is also relatively prime to $s$ ($s$ divides $N$). Then $e^{2\pi i\frac{kr}{s}}$ is also a primitive root of order $s$ so $\Phi_s(e^{2\pi i\frac{kr}s})=0$ so 
	$A(e^{2\pi \frac{\lambda-\lambda'}{N}r})=A(e^{2\pi i\frac{kr}{s}})=0$. This shows that $r\Lambda$ is a spectrum for $A$. 
	\end{proof}

For tiles, there is an analogous but more powerful result, due to Tijdeman \cite{MR1345184}. 
	
	\begin{theorem}\label{th3.4} {\bf [Tijdeman's theorem]}
If $A\oplus B=\bz_N$ and $r$ is relatively prime to $\#A$, then $rA\oplus B=\bz_N$.
\end{theorem}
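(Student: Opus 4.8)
The plan is to carry further the polynomial–cyclotomic argument behind Proposition \ref{pr5.1}, the essential difference being that now only $\gcd(r,\#A)=1$ is available, not $\gcd(r,N)=1$. First I would reduce to the case $r=p$ prime: writing $r=p_1\cdots p_t$ as a product of (not necessarily distinct) primes, each prime to $\#A$, and noting that if $p_1\cdots p_{i-1}A\oplus B=\bz_N$ has already been shown, then $\#\big(p_1\cdots p_{i-1}A\big)=N/\#B=\#A$, so $p_i$ is again prime to it, applying the prime case $t$ times gives $rA\oplus B=\bz_N$. So assume $r=p$ is prime and $p\nmid\#A$; observe that $A$ tiles $\bz$ by Proposition \ref{pr3.2}.

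Set $Q_N(x)=1+x+\dots+x^{N-1}$. It suffices to prove the congruence $A(x^p)B(x)\equiv Q_N(x)\pmod{x^N-1}$. Indeed, the reduction of $A(x^p)$ modulo $x^N-1$ is $\sum_{a\in A}x^{pa\bmod N}$, a polynomial with nonnegative coefficients; if its product with $B(x)$ is $\equiv Q_N\pmod{x^N-1}$, then this reduction must in fact be a $0/1$ polynomial (a coefficient $\geq 2$, convolved against any term of $B$, would force a coefficient $\geq 2$ in $Q_N$), so the numbers $pa\bmod N$ are distinct, $pA$ is a set of $\#A$ elements whose associated polynomial is that reduction, and Proposition \ref{pr3.2} then gives $pA\oplus B=\bz_N$. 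Since a polynomial of degree $<N$ is determined by its values at the $N$-th roots of unity, $Q_N$ vanishes at each such root $\neq 1$, and $A(1)B(1)=\#A\cdot\#B=N$, the congruence is equivalent to the statement: for every divisor $d>1$ of $N$ and every primitive $d$-th root of unity $\omega$, one has $A(\omega^p)B(\omega)=0$.

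Now fix $d$ and $\omega$. From $A\oplus B=\bz_N$ we have $A(\omega)B(\omega)=0$. If $B(\omega)=0$ there is nothing to prove; otherwise $\Phi_d\mid A$, and it remains to show $A(\omega^p)=0$. If $p\nmid d$, then $\omega^p$ is again a primitive $d$-th root, so $A(\omega^p)=0$. If $p\mid d$, write $d=p^ke$ with $k\geq1$ and $p\nmid e$. Since $A$ tiles $\bz$, it satisfies (T1) by Theorem \ref{th4.13}, so $\#A=\prod_{p^\alpha\in S_A}p$; as $p\nmid\#A$ this forces $\Phi_{p^\alpha}\nmid A$ for all $\alpha\geq1$, and since $\Phi_{p^ke}\mid A$ we must have $e\geq2$. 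Then $\omega^p$ has order $p^{k-1}e>1$, and $A(\omega^p)=0$ is equivalent to $\Phi_{p^{k-1}e}\mid A$.

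Everything therefore comes down to the following descent statement, which I expect to be the main obstacle: if $A\oplus B=\bz_N$, $p\nmid\#A$, $p^ke\mid N$ with $k\geq1$ and $e\geq2$ prime to $p$, and $\Phi_{p^ke}\mid A$ while $\Phi_{p^ke}\nmid B$, then $\Phi_{p^{k-1}e}\mid A$. I would argue by contradiction: by Proposition \ref{pr3.2}(iv) the alternative is $\Phi_{p^{k-1}e}\mid B$ and $\Phi_{p^{k-1}e}\nmid A$, and I would extract a contradiction by tracking the multiplicities of the $\Phi_{p^ie}$ ($0\le i\le k$) in the factorization $A(x)B(x)=Q_N(x)G(x)$ (with $G\in\bz[x]$, $G(1)=1$, obtained from $A(x)B(x)\equiv Q_N(x)\pmod{x^N-1}$) and in its image under $x\mapsto x^p$, using the identities $\Phi_{p^ie}(x)=\Phi_{p^{i-1}e}(x^p)$ for $i\geq2$ and $\Phi_e(x)\Phi_{pe}(x)=\Phi_e(x^p)$, the reduction $A(x^p)\equiv A(x)^p\pmod p$, and once more the exclusion $\Phi_{p^\alpha}\nmid A$. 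The hypothesis $p\nmid\#A$ is precisely what rules out the degenerate case $e=1$ (where $A(\omega^p)=A(1)\neq0$ and one genuinely needs $\Phi_d\mid B$) and what makes the descent run; getting the multiplicity bookkeeping to actually close is the heart of the proof.
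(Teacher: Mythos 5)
The paper offers no proof of this statement (it is quoted from Tijdeman's paper), so your argument must stand on its own. Your reductions are sound as far as they go: the factorization of $r$ into primes, the observation that it suffices to prove $A(x^p)B(x)\equiv Q_N(x)\pmod{x^N-1}$, the coefficient argument showing that this congruence forces the residues $pa\bmod N$ to be distinct, and the disposal of the cases $B(\omega)=0$ and $p\nmid d$ are all correct. But the proof is not complete: everything is funneled into the ``descent statement,'' and that statement is essentially the whole theorem in its only nontrivial case --- given your chain of reductions it is equivalent to the prime case of the theorem. You do not prove it; you sketch a strategy by contradiction via multiplicities of the $\Phi_{p^ie}$ in $A(x)B(x)=Q_N(x)G(x)$ and concede that closing the bookkeeping is ``the heart of the proof.'' The cofactor $G$ is completely uncontrolled (it need not have nonnegative coefficients, and cyclotomic polynomials can occur in it with arbitrary multiplicity), so there is no evident way to make that accounting close; as written this is a genuine gap, not a routine verification left to the reader.

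The irony is that you already name the two decisive ingredients --- the congruence $A(x^p)\equiv A(x)^p\pmod p$ and the coefficient-counting argument --- but deploy them in the wrong place. Apply them to the whole product at once, modulo $p$: since $x^jQ_N(x)\equiv Q_N(x)\pmod{x^N-1}$ for every $j$, one gets
\[
A(x^p)B(x)\equiv A(x)^{p-1}\,A(x)B(x)\equiv A(x)^{p-1}Q_N(x)\equiv A(1)^{p-1}Q_N(x)\equiv Q_N(x) \pmod{\,p,\ x^N-1\,},
\]
the last step by Fermat's little theorem because $p\nmid A(1)=\#A$. Now the reduction of $A(x^p)B(x)$ modulo $x^N-1$ has nonnegative integer coefficients summing to $\#A\cdot\#B=N$, each congruent to $1$ modulo $p$ and hence at least $1$; since there are $N$ of them, they all equal $1$, which is exactly the congruence $A(x^p)B(x)\equiv Q_N(x)\pmod{x^N-1}$ you were after. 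This three-line argument replaces the entire cyclotomic case analysis and the unproven descent; I would recommend restructuring the proof around it.
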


	\begin{question}\label{q5.2}
	Is Proposition \ref{pr5.1} true if $r$ is merely relatively prime to $\#A$? This would be a dual of Tijdeman's theorem. 
	\end{question}

\begin{theorem}\label{th5.2.1}
Let $A$ be a tile in $\bz_N$, $A\oplus B=\bz_N$, let $M$ be some postive integer and, for each $a\in A$, let $A_a$ be a tile in $\bz_M$ with a common tiling set $C$, $A_a\oplus C=\bz_M$. Then the set $\tilde A=\cup_{a\in A}(\{a\}\oplus NA_a)$ is a tile in $\bz_{NM}$ with tiling set $\tilde B=B\oplus NC$. If, in addition, the sets $A$ and $A_a$, $a\in A$ satisfy the CM-property, then the set $\tilde A$ satisfies the CM-property. 
\end{theorem}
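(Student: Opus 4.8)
The plan is to treat the two assertions — that $\tilde A\oplus\tilde B=\bz_{NM}$ and that $\tilde A$ has the CM-property — separately. Throughout, choose representatives so that $A,B\subseteq\{0,\dots,N-1\}$ and $A_a,C\subseteq\{0,\dots,M-1\}$, hence $\tilde A,\tilde B\subseteq\{0,\dots,NM-1\}$. For the tiling assertion I would run a fibering argument. The sets $\{a\}\oplus NA_a$, $a\in A$, are pairwise disjoint since they lie in distinct residue classes modulo $N$, and since all $A_a$ have the same cardinality $M/\#C$ one gets $\#\tilde A\cdot\#\tilde B=\#A\,(M/\#C)\,(N/\#A)\,\#C=NM$. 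To decompose an arbitrary $z\in\bz_{NM}$: reduce modulo $N$ and use $A\oplus B=\bz_N$ to write $z\equiv a+b\pmod N$ uniquely with $a\in A$, $b\in B$; then $z-a-b$ lies in the subgroup $N\bz_{NM}\cong\bz_M$, say $z-a-b=N\ell$ with $\ell\in\bz_M$, and use $A_a\oplus C=\bz_M$ to write $\ell\equiv\alpha+\gamma\pmod M$ uniquely with $\alpha\in A_a$, $\gamma\in C$; then $z=(a+N\alpha)+(b+N\gamma)$ with $a+N\alpha\in\tilde A$ and $b+N\gamma\in\tilde B$. Uniqueness follows by reversing the reductions: reading modulo $N$ kills the $N\alpha,N\gamma$ terms and determines $a,b$ by uniqueness for $A\oplus B$; then $a=a'$ forces $A_a=A_{a'}$, and dividing the remaining relation by $N$ determines $\alpha,\gamma$ by uniqueness for $A_a\oplus C$. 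Hence $\tilde A\oplus\tilde B=\bz_{NM}$ with tiling set $\tilde B=B\oplus NC$.

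For the CM assertion, record the polynomial identities
\[
\tilde A(x)=\sum_{a\in A}x^aA_a(x^N),\qquad \tilde B(x)=B(x)\,C(x^N),
\]
the second of which factors. Since the $A_a$ all share the tiling complement $C$, Lemma \ref{lemco2.1} shows that $S_{A_a}$ is one and the same set $S_*$ for every $a$, namely the complement of $S_C$ among the prime-power divisors of $M$; likewise $S_A$ is the complement of $S_B$ among the prime-power divisors of $N$. I would then pin down $S_{\tilde A}$ indirectly: since $\tilde A\oplus\tilde B=\bz_{NM}$, Proposition \ref{pr3.2} and Lemma \ref{lemco2.1} give that $S_{\tilde A}$ is the complement of $S_{\tilde B}$ among the prime-power divisors of $NM$; and from $\tilde B(x)=B(x)C(x^N)$ one checks, prime power by prime power — using that for a primitive $p^\beta$-th root of unity $\zeta$ the element $\zeta^N$ has order $p^{\beta-\min(\beta,\,v_p(N))}$ — that $\Phi_{p^\beta}\mid\tilde B$ iff $p^\beta\in S_B$ (when $\beta\le v_p(N)$) or $p^{\beta-v_p(N)}\in S_C$ (when $\beta>v_p(N)$). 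Complementing yields
\[
S_{\tilde A}=S_A\ \sqcup\ \bigl\{\, p^{\,v_p(N)+\gamma}\ :\ p^\gamma\in S_*\,\bigr\}.
\]

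With this description, (T1) and (T2) for $\tilde A$ are short. For (T1), using $\Phi_{p^k}(1)=p$ (Proposition \ref{pr2.15}) and the (T1) identities for $A$ and for $A_a$,
\[
\prod_{s\in S_{\tilde A}}\Phi_s(1)=\Bigl(\prod_{s\in S_A}\Phi_s(1)\Bigr)\Bigl(\prod_{p^\gamma\in S_*}p\Bigr)=\#A\cdot\#A_a=\#\tilde A.
\]
For (T2), given $s_1,\dots,s_m\in S_{\tilde A}$ that are powers of distinct primes $p_1,\dots,p_m$, partition the index set into $J_1=\{j:s_j\in S_A\}$ and $J_2=\{j:s_j=p_j^{\,v_{p_j}(N)+\gamma_j}\text{ with }p_j^{\gamma_j}\in S_*\}$, and set $s=\prod_j s_j$ and $s_0=\prod_{j\in J_2}p_j^{\gamma_j}$. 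Let $\zeta_s$ be a primitive $s$-th root of unity. If $J_2=\emptyset$ then $s\mid N$, so $\zeta_s^N=1$ and $\tilde A(\zeta_s)=\#A_a\cdot A(\zeta_s)$, which is $0$ because $\Phi_s\mid A$ by (T2) for $A$. If $J_2\neq\emptyset$, then $\zeta_s^N$ is a primitive $s_0$-th root of unity, and $\Phi_{s_0}\mid A_a$ by (T2) applied to each $A_a$ (the $p_j^{\gamma_j}$, $j\in J_2$, being powers of distinct primes in $S_*=S_{A_a}$), so every term $A_a(\zeta_s^N)$ vanishes and $\tilde A(\zeta_s)=\sum_{a\in A}\zeta_s^aA_a(\zeta_s^N)=0$. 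In either case $\Phi_s\mid\tilde A$, which is (T2); hence $\tilde A$ has the CM-property.

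The step I expect to be the main obstacle is the exact determination of $S_{\tilde A}$: since $\tilde A(x)=\sum_a x^aA_a(x^N)$ does not factor, its cyclotomic divisors cannot be read off directly, and a priori cancellation among the fibers could create extra roots. The device that sidesteps this is to compute $S_{\tilde B}$ instead — $\tilde B(x)=B(x)C(x^N)$ does factor — and then invoke that the prime-power cyclotomic divisors of a tile of $\bz_{NM}$ and of its complement partition the prime-power divisors of $NM$ (Proposition \ref{pr3.2}, Lemma \ref{lemco2.1}); everything else is bookkeeping with $p$-adic valuations.
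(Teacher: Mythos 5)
Your proof is correct, and the tiling half is essentially the paper's argument (the mixed-radix decomposition $x=k_1+Nk_2$); but your route to the CM-property differs from the paper's at the key step of identifying $S_{\tilde A}$. The paper works directly on $\tilde A$: it shows $S_A\cup S_{NA_a}\subseteq S_{\tilde A}$ with the two pieces disjoint, and then forces equality by a counting argument, comparing $\#\tilde A=\prod_{s\in S_{\tilde A}}\Phi_s(1)$ (which holds by (T1), since $\tilde A$ is a tile) with $\prod_{s\in S_A}\Phi_s(1)\prod_{s\in S_{NA_a}}\Phi_s(1)=\#A\cdot\#(NA_a)$. You instead compute $S_{\tilde B}$ from the factored polynomial $\tilde B(x)=B(x)C(x^N)$ and take the complement among the prime-power divisors of $NM$ via Proposition \ref{pr3.2} and Lemma \ref{lemco2.1}; this is legitimate and arrives at the same set, since by iterating Lemma \ref{lemco1.4}(ii) one has $S_{NA_a}=\{p^{v_p(N)+\gamma}:p^\gamma\in S_{A_a}\}$, matching your description. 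Your approach trades the paper's (T1)-based pigeonhole for an explicit valuation computation on the complement; the paper's avoids analyzing $\tilde B$ at all. In the (T2) verification your treatment of the mixed case is also cleaner: you compute that $\zeta_s^N$ has exact order $s_0=s/\gcd(s,N)=\prod_{j\in J_2}p_j^{\gamma_j}$ and invoke (T2) for $A_a$, whereas the paper reaches the same conclusion through a more laborious factorization $N=N_1N_2N_3$. Both arguments establish the same fact, namely that $\Phi_s(x)$ divides $A_a(x^N)$ for all $a$, whence it divides $\tilde A(x)$ by the fibered polynomial identity.
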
	
\begin{proof}
Take $x$ in $\bz_{NM}$. It can be written uniquely as $x=k_1+Nk_2$ with $k_1\in\bz_N$ and $k_2\in \bz_M$. Since $A\oplus B=\bz_N$, $k_1$ can be written uniquely as $k_1=a+b$, with $a\in A$ and $b\in B$. Since $A_a\oplus C=\bz_M$, $k_2$ can be written uniquely as $k_2=a'+c$ with $a'\in A_a$ and $c\in C$. Thus $x=(a+Na')+(b+Nc)$. 

Assume now that the sets $A$ and $A_a$, $a\in A$ satisfy the CM-condition. Since $\tilde A$ is a tile, by Theorem \ref{th4.13}, $\tilde A$ satisfies the (T1) property. 
To check the (T2) property, first we have to compute the set $S_{\tilde A}$.

Note first that 
\begin{equation}
\tilde A(x)=\sum_{a\in A}x^a A_a(x^N).
\label{eq5.2.1.1}
\end{equation}

Since $A_a\oplus C=\bz_M$ for all $a$, it follows the the sets $A_a$ have the same cardinality, and, from Lemma \ref{lemco2.1}, that $S_{A_a}$ is the complement of $S_C$ in the set of all prime power factors of $M$. Therefore, the sets $S_{A_a}$ are all equal. By Lemma \ref{lemco1.4}, it follows that the sets $S_{NA_a}$ are all equal, and the sets $NA_a$ satisfy the CM-property.   

We will prove that 
\begin{equation}
S_{\tilde A}=S_A\cup S_{NA_a}\mbox{ disjoint union.}
\label{eq5.2.1.2}
\end{equation}

If $s$ is a prime power in $S_A$, then $s$ divides $N$ (by Lemma \ref{lemco2.1}) and if $\omega=e^{2\pi i/s}$ then $\omega^N=1$. Thus $(NA_a)(\omega)=A_a(\omega^N)=A_a(1)=\#A_a\neq 0$ so $s$ is not in $S_{NA_a}$. So the sets $S_A$ and $S_{NA_a}$ are disjoint. 

With \eqref{eq5.2.1.1}, we have 
$$\tilde A(\omega)=\sum_{a\in A}\omega^aA_a(1)=\#A_a\cdot A(\omega)=0.$$
Thus $s\in S_{\tilde A}$. 

If $s$ is a prime power in $S_{NA_a}$, then $\Phi_s(x)$ divides $(NA_a)(x)=A_a(x^N)$ for all $a\in A$, and by \eqref{eq5.2.1.1}, it follows that $\Phi_s(x)$ divides $\tilde A(x)$, so $s\in S_{\tilde A}$. This proves that $S_A\cup S_{NA_a}\subset S_{\tilde A}$. 

Since the sets $\tilde A$, $A$ and $NA_a$ are tiles, they satisfy the (T1) property. Therefore, we have 
$$\#\tilde A=\prod_{s\in S_{\tilde A}}\Phi_s(1)\geq \prod_{s\in S_A}\Phi_s(1)\prod_{s\in S_{NA_a}}\Phi_s(1)=\#A\cdot\#(NA_a)=\#\tilde A.$$
Thus, we have equality in the inequality, and with Proposition \ref{pr2.15}, since $\Phi_s(1)>1$ for any prime power $s$, it follows that we cannot have more elements in $S_{\tilde A}$, so \eqref{eq5.2.1.2} is satisfied. 

Now take $s_1,\dots,s_n$ powers of distinct primes in $S_{\tilde A}$. If all the $s_i$ are in $S_A$, then since $A$ satisfies the (T2) property, it follows that for $s=s_1\dots s_n$, $\Phi_s(x)$ divides $A(x)$. Also, in this case, $s$ divides $N$, so if $\omega=e^{2\pi i/s}$ then $\omega^N=1$. Using \eqref{eq5.2.1.1}, we obtain that $\tilde A(\omega)=0$ so $\Phi_s(x)$ divide $\tilde A(x)$. 

If all the $s_i$ are in $S_{NA_a}$, then since $NA_a$ satisfies the (T2) property, we get that $\Phi_s(x)$ divides $(NA_a)(x)=A_a(x^N)$, for all $a\in A$, and using \eqref{eq5.2.1.1}, we obtain that $\Phi_s(x)$ divides $\tilde A(x)$. 

Now assume $s_1,\dots,s_j$ are in $S_{A}$ (and hence they divide $N$) and $s_{j+1},\dots, s_{n}$ are in $S_{NA_a}$. Let $s'=s_{j+1}\dots s_n$. We can factor $N$ as $N=N_1N_2N_3$ where $N_1$ contains all the prime factors that appear in $s_1,\dots, s_j$, $N_2$ contains all the prime factors of $N$ that appear in $s'$ and $N_3$ contains all the prime factors of $N$ that do not appear in $s_1,\dots, s_n$, and $N_1,N_2,N_3$ are mutually prime. (The numbers $s_1\dots s_j$ and $s'$ are relatively prime, because the $s_i$ are powers of {\it distinct} primes.) 

Then $\frac{N}{s_1\dots s_n}=\frac{N_1N_2N_3}{s_1\dots s_j s'}$ can be reduced to $k_1\frac{k_2}{s''}N_3$ where $k_1=\frac{N_1}{s_1\dots s_j}$ is an integer which contains only prime factors that appear in $s_1,\dots, s_j$, $\frac{N_2}{s'}=\frac{k_2}{s''}$, with $k_2,s''$ relatively prime integers, and $k_2$ contains only prime factors that appear in $s'$.

Then we also have $\frac{N}{s'}=\frac{N_1N_2N_3}{s'}=N_1\frac{k_2}{s''}N_3$, with $s'$ and $N_1k_2N_3$ relatively prime, so if $\omega'=e^{2\pi i/s}$ then $\omega'^N= e^{2\pi i\frac{N}{s'}}$ is a primitive root of unity of order $s''$. Since $NA_a$ satisfies the (T2) property, we get that $\Phi_{s'}(x)$ divides $(NA_a)(x)$ so $A_a(\omega'^N)=0$ which means that $\Phi_{s''}(x)$ divides $A_a(x)$. 
Then we also have $\frac{N}{s'}=\frac{N_1N_2N_3}{s'}=N_1\frac{k_2}{s''}N_3$, with $s'$ and $N_1k_2N_3$ relatively prime, so if $\omega'=e^{2\pi i/s}$ then $\omega'^N= e^{2\pi i\frac{N}{s'}}$ is a primitive root of unity of order $s''$. Since $NA_a$ satisfies the (T2) property, we get that $\Phi_{s'}(x)$ divides $(NA_a)(x)$ so $A_a(\omega'^N)=0$ which means that $\Phi_{s''}(x)$ divides $A_a(x)$. 

 Since $k_1k_2N_3$ is also relatively prime with $s''$, it follows that, for $\omega=e^{2\pi i/s}$, $\omega^N=e^{2\pi i\frac{k_1k_2N_3}{s''}}$ is also a primitive root of unity of order $s''$. Therefore, $(NA_a)(\omega)=A_a(\omega^N)=0$ which means that $\Phi_s(x)$ divides $(NA_a)(x)=A_a(x^N)$, for all $a\in A$. From \eqref{eq5.2.1.1}, it follows that $\Phi_s(x)$ divides $\tilde A(x)$. 

Thus, $\tilde A(x)$ satisfies the (T2) property.

\end{proof}

\begin{remark}
We see, from Lemma \ref{lem4.5}, that, in the case when $A$ is a tile and $\#A$ has at most two prime factors $p$ and $q$, then reducing modulo $\lcm(S_A)$, either $A$ is contained in $p\bz$ or $q\bz$, or it is of the form given in Theorem \ref{th5.2.1}. We show in the next example that this is not always the case. 
\end{remark}

\begin{example}
Consider Szab{\' o}'s example, with $m_1=2^2$, $m_2=3^2$, $m_3=5^2$, so $G=\bz_4\times\bz_9\times\bz_{25}$ which is isomorphic to $\bz_{900}$ by the isomorphism in Proposition \ref{pr5.6.1}. Let $u_1=v_1=2$, $u_2=v_2=3$, $u_3=v_3=5$ and the permutation $\pi$ of $\{1,2,3\}$, $\pi(1)=2$, $\pi(2)=3$, $\pi(3)=1$. Then 
$$A=\{0,1\}\times \{0,1,2\}\times\{0,1,2,3,4\} \subset \bz_{4}\times\bz_9\times\bz_{25},$$
$$B'=\{0,2\}\times\{0,3,6\}\times\{0,5,10,15,20\}\subset \bz_4\times\bz_9\times\bz_{25}.$$
To construct the set $B$ we replace the subset $B_1=\{0,2\}\times\{3\}\times\{0\}$ of $B'$ with the set $\tilde B_1=B_1+g_1=\{1,3\}\times\{3\}\times\{0\}$, the set $B_2=\{0\}\times\{0,3,6\}\times\{5\}$ with the set $\tilde B_2=B_2+g_2=\{0\}\times\{1,4,7\}\times \{5\}$, and the set $B_3=\{2\}\times\{0\}\times \{0,5,10,15,20\}$ with the set $\tilde B_3=B_3+g_3=\{2\}\times\{0\}\times \{1,6,11,16,21\}$. so
$$B=B'\setminus (B_1\cup B_2\cup B_3)\cup (\tilde B_1\cup\tilde B_2\cup\tilde B_3).$$

The isomorphism in Proposition \ref{pr5.6.1} is given by 
$$\bz_4\times\bz_9\times\bz_{25}\ni (a_1,a_2,a_3)\mapsto a_1\cdot 3^2\cdot 5^2+a_2\cdot 2^2\cdot 5^2+a_3\cdot 2^2\cdot 3^2\in \bz_{900}.$$

We have that $\Psi(B)$, which has the CM-property and tiles $\bz_{900}$, according to Proposition \ref{pr5.12}, does not have the form in Theorem \ref{th5.2.1}. 

Note that if a set $\tilde A$ is of the form in Theorem \ref{th5.2.1}, i.e., 
$$\tilde A=\cup_{a\in a}(\{a\}\oplus NA_a),$$ then, for $k\in\bz_N$, if $k\in A$, then $\#\{a\in \tilde A : a\equiv k (\mod N)\}=\#A_a=\#\tilde A/\#C$, a constant which does not depend on $k$, and which divides $\#\tilde A$, and it is $0$ if $k\not\in A$. We say that $\tilde A$ is {\it equidistributed } $\mod N$. 

We will see that $B$ is not equidistributed $\mod 2$. Indeed, let $B_0:=B'\setminus (B_1\cup B_2\cup B_3)$. 
Then, $$\Psi(B_0)\equiv 0(\mod 2), \Psi(\tilde B_1)\equiv 1 (\mod 2), \Psi(\tilde B_2)\equiv 0(\mod 2), \Psi(\tilde B_3)\equiv 0(\mod 2),$$
$$\Psi(B_0)\equiv 0(\mod 3), \Psi(\tilde B_1)\equiv 0 (\mod 3), \Psi(\tilde B_2)\equiv 1(\mod 3), \Psi(\tilde B_3)\equiv 0(\mod 3),$$
$$\Psi(B_0)\equiv 0(\mod 5), \Psi(\tilde B_1)\equiv 0 (\mod 5), \Psi(\tilde B_2)\equiv 0(\mod 5), \Psi(\tilde B_3)\equiv 1(\mod 5).$$
Thus, $\Psi(B_0)\equiv 0(\mod 30)$, and $\Psi(B_0)$ has 20 elements, a number which does not divide $\#\Psi(B)=30$. This means that $\Psi(B)$ cannot have the form in Theorem \ref{th5.2.1}. 

We have $\Psi(\tilde B_1)=\{75, 525\}$, $\Psi(\tilde B_2)=\{280,580,880\}$, $\Psi(\tilde B_3)=\{486,666,846,126,306\}$, 
$$\Psi(B_0)=\{0,30,60,120,150,210,240,330,360,390,420,510,540,570,600,660,690,720,840,870\}.$$
Thus
$$\Psi(B)=\{0, 30, 60, 75, 120, 126, 150, 210, 240, 280, 306, 330, 360, 390, 420, 486,$$$$ 510, 525, 540, 570, 580, 600, 660, 666, 690, 720, 840, 846, 870, 880\}.$$
\end{example}
	\begin{proposition}\label{pr5.3}
	Let $A$ be a finite set of non-negative integers which has a spectrum $\Lambda_1\subset \frac1N\bz$. Suppose $\{A_a : a \in A\}$ are finite sets of non-negative integers that have a common spectrum $\Lambda_2$. Then the set $\cup_{a\in A}(\{a\}\oplus N A_a)$ is spectral with spectrum $\Lambda_1+\frac1N\Lambda_2$. 
	\end{proposition}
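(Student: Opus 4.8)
The plan is to verify directly the orthogonality relations of Definition~\ref{def5.4} for $\tilde A:=\cup_{a\in A}(\{a\}\oplus NA_a)$ and the candidate spectrum $\Lambda:=\Lambda_1+\frac1N\Lambda_2$, equivalently (Proposition~\ref{pr5.5}) to show that the matrix $\frac{1}{\sqrt{\#\tilde A}}\left(e^{2\pi ix\mu}\right)_{x\in\tilde A,\,\mu\in\Lambda}$ is unitary. Before doing so I would dispose of the bookkeeping. Since $\Lambda_1\subset\frac1N\bz$ and $A\subset\bz$, the number $e^{2\pi ia\lambda}$ with $\lambda\in\Lambda_1$ depends only on $a$ modulo $N$; as $\left(e^{2\pi ia\lambda}\right)_{a\in A,\lambda\in\Lambda_1}$ is (a multiple of) a unitary matrix it has no two equal rows, so the elements of $A$ are pairwise distinct modulo $N$. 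Hence the sets $\{a\}+NA_a$ are pairwise disjoint, $\tilde A$ is a genuine disjoint union, every $x\in\tilde A$ is uniquely of the form $x=a+Na'$ with $a\in A,\ a'\in A_a$, and, since the $A_a$ share the spectrum $\Lambda_2$ and so all have cardinality $\#\Lambda_2$, one gets $\#\tilde A=\sum_{a\in A}\#A_a=\#A\cdot\#\Lambda_2=\#\Lambda_1\cdot\#\Lambda_2$.

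The main computation is the evaluation, for $\lambda_1,\lambda_1'\in\Lambda_1$ and $\lambda_2,\lambda_2'\in\Lambda_2$, of
$$\sum_{x\in\tilde A}e^{2\pi ix\left[(\lambda_1+\frac1N\lambda_2)-(\lambda_1'+\frac1N\lambda_2')\right]}=\sum_{a\in A}\sum_{a'\in A_a}e^{2\pi i(a+Na')\left[(\lambda_1-\lambda_1')+\frac1N(\lambda_2-\lambda_2')\right]}.$$
Expanding the exponent, the cross term $Na'(\lambda_1-\lambda_1')$ is an integer, because $N(\lambda_1-\lambda_1')\in\bz$ and $a'\in\bz$, and so contributes a factor $1$; this is the one spot where the hypothesis $\Lambda_1\subset\frac1N\bz$ is genuinely used. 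The sum therefore factors as
$$\sum_{a\in A}e^{2\pi ia(\lambda_1-\lambda_1')}\,e^{2\pi i\frac aN(\lambda_2-\lambda_2')}\left(\sum_{a'\in A_a}e^{2\pi ia'(\lambda_2-\lambda_2')}\right).$$
If $\lambda_2\neq\lambda_2'$, the inner sum vanishes for every $a$ since $\Lambda_2$ is a spectrum for $A_a$, so the whole expression is $0$. If $\lambda_2=\lambda_2'$, the inner sum equals $\#A_a=\#\Lambda_2$, the middle factor is $1$, and the expression reduces to $\#\Lambda_2\sum_{a\in A}e^{2\pi ia(\lambda_1-\lambda_1')}=\#\Lambda_2\cdot\#A\cdot\delta_{\lambda_1\lambda_1'}=\#\tilde A\cdot\delta_{\lambda_1\lambda_1'}$, using that $\Lambda_1$ is a spectrum for $A$.

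Combining the two cases, $\frac1{\#\tilde A}\sum_{x\in\tilde A}e^{2\pi ix(\mu-\mu')}$ is $1$ when $(\lambda_1,\lambda_2)=(\lambda_1',\lambda_2')$ and $0$ otherwise, where $\mu=\lambda_1+\frac1N\lambda_2$ and $\mu'=\lambda_1'+\frac1N\lambda_2'$. In particular two distinct pairs cannot give the same $\mu$ — otherwise that sum would be $\#\tilde A\neq0$ while the formula forces $0$ — so the map $(\lambda_1,\lambda_2)\mapsto\lambda_1+\frac1N\lambda_2$ is injective, whence $\#\Lambda=\#\Lambda_1\cdot\#\Lambda_2=\#\tilde A$ and the identity above reads precisely $\delta_{\mu\mu'}$ for $\mu,\mu'\in\Lambda$; by Definition~\ref{def5.4} this means $\Lambda=\Lambda_1+\frac1N\Lambda_2$ is a spectrum for $\tilde A$. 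I expect the only delicate point to be the preliminary step — ensuring that the union defining $\tilde A$ is disjoint with the expected cardinality, which is exactly what can fail for an arbitrary spectrum and forces the restriction $\Lambda_1\subset\frac1N\bz$; once that is settled, the vanishing of the cross term modulo $1$ does the real work and the remainder is a routine double use of the two given spectral identities.
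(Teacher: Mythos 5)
Your proof is correct and follows essentially the same route as the paper: split the exponential over $x=a+Na'$, use $N(\lambda_1-\lambda_1')\in\bz$ to kill the cross term, and apply the two spectral identities in turn. The extra bookkeeping you supply (distinctness of $A$ modulo $N$, disjointness of the union, and injectivity of $(\lambda_1,\lambda_2)\mapsto\lambda_1+\frac1N\lambda_2$) is a welcome tightening of details the paper leaves implicit, but it does not change the argument.
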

	
	\begin{proof}
	Let $(\lambda_1,\lambda_2)\neq (\lambda_1',\lambda_2')$ in $\Lambda_1\times\Lambda_2$. Since $N(\lambda_1-\lambda_1')\in\bz$, we have
	$$\sum_{a\in A}\sum_{b\in A_a}e^{2\pi i(a+Nb)(\lambda_1-\lambda_1'+\frac1N(\lambda_2-\lambda_2'))}=\sum_{a\in A}e^{2\pi ia(\lambda_1-\lambda_1'+\frac1N(\lambda_2-\lambda_2'))}\sum_{b\in A_a}e^{2\pi ib(\lambda_2-\lambda_2')}.$$
	Since $\Lambda_2$ is a spectrum for all sets $A_a$, we get that, if $\lambda_2\neq \lambda_2'$, the last sum is 0, for all $a\in A$. 
	
	If $\lambda_2=\lambda_2'$, then the above sum becomes
	$$\sum_{a\in A}e^{2\pi i a(\lambda_1-\lambda_1')}\#A_a=\#\Lambda_2\sum_{a\in A}e^{2\pi ia(\lambda_1-\lambda_1')}$$
	and since $\Lambda_1$ is a spectrum for $A$, this sum is 0 if $\lambda_1\neq\lambda_1'$. 
	\end{proof}

	\begin{corollary}\label{cor5.3.1}
	Let $A$ be a spectral set in $\bz_N$ with spectrum $\Lambda_1$. Suppose $\{A_a : a\in A\}$ are subsets of $\bz_M$ that have a common spectrum $\Lambda_2$ in $\bz_M$. Then the set $\cup_{a\in A}(\{a\}\oplus NA_a)$ is spectral in $\bz_{NM}$ with spectrum $M\Lambda_1+\Lambda_2$. 
	\end{corollary}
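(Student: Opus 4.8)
The plan is to deduce Corollary~\ref{cor5.3.1} from Proposition~\ref{pr5.3} by passing between the cyclic groups $\bz_N,\bz_M,\bz_{NM}$ and the integers. First I would fix representatives: regard $A$ and $\Lambda_1$ as subsets of $\{0,1,\dots,N-1\}$ and each $A_a$ and $\Lambda_2$ as subsets of $\{0,1,\dots,M-1\}$. By Proposition~\ref{pr5.5}(ii), the statement that $\Lambda_1$ is a spectrum for $A$ in $\bz_N$ is exactly the unitarity of $\frac{1}{\sqrt{\#A}}\bigl(e^{2\pi i a\lambda/N}\bigr)_{a\in A,\lambda\in\Lambda_1}$; since $e^{2\pi i a\lambda/N}=e^{2\pi i a\cdot(\lambda/N)}$, this says that $\frac1N\Lambda_1\subset\frac1N\bz$ is a spectrum for the integer set $A$ in the sense of Proposition~\ref{pr5.3}. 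Likewise $\frac1M\Lambda_2$ is a common spectrum for the integer sets $A_a$.

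Next I would apply Proposition~\ref{pr5.3}, taking the integer $N$ there to be our $N$: the set $\tilde A:=\cup_{a\in A}(\{a\}\oplus NA_a)$ is spectral (as a subset of $\bz$) with spectrum $\frac1N\Lambda_1+\frac1N\cdot\frac1M\Lambda_2=\frac{1}{NM}(M\Lambda_1+\Lambda_2)$. Note that since $a\in\{0,\dots,N-1\}$ and each element of $A_a$ lies in $\{0,\dots,M-1\}$, every element $a+Nb$ of $\tilde A$ lies in $\{0,\dots,NM-1\}$, so $\tilde A$ may be viewed as a subset of $\bz_{NM}$, and moreover the union defining $\tilde A$ is disjoint, whence $\#\tilde A=\#\Lambda_1\cdot\#\Lambda_2$.

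Finally I would translate back to $\bz_{NM}$. The numbers $M\lambda_1+\lambda_2$ with $\lambda_1\in\Lambda_1\subset\{0,\dots,N-1\}$ and $\lambda_2\in\Lambda_2\subset\{0,\dots,M-1\}$ all lie in $\{0,\dots,NM-1\}$ and are pairwise distinct, so $M\Lambda_1+\Lambda_2$ has $\#\Lambda_1\cdot\#\Lambda_2=\#\tilde A$ elements as a subset of $\bz_{NM}$. For $c\in\tilde A$ and $\mu=M\lambda_1+\lambda_2$ we have $e^{2\pi i c\mu/(NM)}=e^{2\pi i c\cdot\mu/(NM)}$, so unitarity of $\frac{1}{\sqrt{\#\tilde A}}\bigl(e^{2\pi i c\mu/(NM)}\bigr)_{c\in\tilde A,\ \mu\in M\Lambda_1+\Lambda_2}$ is literally the assertion that $\frac{1}{NM}(M\Lambda_1+\Lambda_2)$ is a spectrum for $\tilde A\subset\bz$, established in the previous step. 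By Proposition~\ref{pr5.5}, $M\Lambda_1+\Lambda_2$ is then a spectrum for $\tilde A$ in $\bz_{NM}$.

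Since each step is a routine unitarity-or-cardinality translation, there is no genuine obstacle here; the only point that needs a line of care is that passing to the quotient $\bz_{NM}$ does not collapse the spectrum, i.e.\ that $M\Lambda_1+\Lambda_2$ retains $\#\tilde A$ elements, which is immediate from the range bounds on the chosen representatives. Alternatively, one could bypass Proposition~\ref{pr5.3} entirely and simply repeat its short orthogonality computation directly with the characters $c\mapsto e^{2\pi i c\mu/(NM)}$ of $\bz_{NM}$, splitting the double sum over $\Lambda_1\times\Lambda_2$ as in that proof.
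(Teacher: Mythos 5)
Your proposal is correct and follows exactly the route the paper intends: the corollary is stated as an immediate consequence of Proposition~\ref{pr5.3}, obtained by rescaling the $\bz_N$-spectrum $\Lambda_1$ to $\frac1N\Lambda_1\subset\frac1N\bz$ and the common $\bz_M$-spectrum $\Lambda_2$ to $\frac1M\Lambda_2$, applying the proposition, and clearing denominators to recognize $\frac1N\Lambda_1+\frac1{NM}\Lambda_2=\frac1{NM}(M\Lambda_1+\Lambda_2)$ as the spectrum $M\Lambda_1+\Lambda_2$ in $\bz_{NM}$. Your added check that $M\Lambda_1+\Lambda_2$ does not collapse modulo $NM$ (via the range bounds on the representatives) is a worthwhile detail that the paper leaves implicit.
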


	Next, we present a result which generalizes and refines a result from Terence Tao's blog, due to I. \L aba.
		
\begin{theorem}\label{thtaolaba}
Suppose $A\oplus B=\bz_N$ and $\#A$ and $\#B$ are relatively prime. Then $A$ consists of a single representative from each class modulo $N/\#B$, and $B$ consists of a single representative from each class modulo $N/\#A$. In other words, $A$ is a complete set of representatives modulo $N/\#B$. In particular, $A$ and $B$ have the CM-property and are spectral. 
\end{theorem}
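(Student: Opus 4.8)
The plan is to derive the structural claims directly from Tijdeman's theorem (Theorem~\ref{th3.4}) by reducing \emph{dilated} tilings modulo a divisor of $N$. Write $n=\#A$ and $m=\#B$; then $nm=N$ and $\gcd(n,m)=1$, so in particular $n\mid N$ and $m\mid N$.

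First I would show that $A$ is a complete set of representatives modulo $n=N/\#B$. Since $n=\#A$ is relatively prime to $\#B$, Tijdeman's theorem applied to $B\oplus A=\bz_N$ yields $(nB)\oplus A=\bz_N$; comparing cardinalities forces $\#(nB)=N/\#A=m$. Now group the elements of $\bz_N$ by their residue modulo $n$: each class has exactly $N/n=m$ elements, and every element of $nB$ lies in the class of $0$. Hence, for a fixed residue $j$, an element $x=u+a$ with $u\in nB$ and $a\in A$ lies in the class of $j$ if and only if $a$ lies in the class of $j$. Since $(nB)\oplus A=\bz_N$ expresses $\bz_N$ as a disjoint union of such pairs, counting the pairs landing in the class of $j$ gives
\[
m=\#(nB)\cdot\#\{a\in A : a\equiv j\ (\mathrm{mod}\ n)\}=m\cdot\#\{a\in A : a\equiv j\ (\mathrm{mod}\ n)\},
\]
so each residue class modulo $n$ contains exactly one element of $A$. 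The same argument with the roles of $A$ and $B$ interchanged — dilate $A$ by $m=\#B$, which is relatively prime to $\#A$, obtaining $(mA)\oplus B=\bz_N$, and reduce modulo $m$ — shows that each residue class modulo $m=N/\#A$ contains exactly one element of $B$.

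Finally, a set that is a complete set of representatives modulo its own cardinality has the CM-property by Proposition~\ref{pr5.0} (applied with modulus $\#A$, respectively $\#B$), and a set with the CM-property is spectral by \L aba's theorem (Theorem~\ref{thlab}); this yields the remaining assertions for both $A$ and $B$. I do not expect a serious obstacle once Tijdeman's theorem is invoked; the only delicate points are to apply Tijdeman in the correct direction — one must dilate the factor whose cardinality is coprime to the dilation parameter, so $B$ gets multiplied by $\#A$ and $A$ by $\#B$ — and to observe that after reduction modulo $\#A$ the dilated copy of $B$ collapses into a single residue class, which is exactly what forces $A$ to be equidistributed.
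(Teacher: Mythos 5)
Your proof is correct, and it rests on the same pivotal ingredient as the paper's: Tijdeman's theorem applied to the factor whose cardinality is coprime to the dilation parameter. The difference is in how the two arguments finish. The paper dilates $A$ by $r=\#B$ to get $rA\oplus B=\bz_N$, lifts this to $B\oplus(rA\oplus N\bz)=\bz$, and then invokes Lemma~\ref{lemco2.5} (with $C=rA\oplus N\bz\subset r\bz$ and $k=r$) to ``divide by $r$'': the class $B'=\{b\in B:b\equiv 0\ (\mathrm{mod}\ r)\}$ is a singleton, so $A\oplus (N/r)\bz=\bz$. You instead dilate $B$ by $n=\#A$, observe that $nB$ collapses into the zero class modulo $n$, and count pairs landing in each residue class directly; this is a more elementary and self-contained finish that avoids Lemma~\ref{lemco2.5} altogether, at the cost of redoing by hand the equidistribution count that the lemma packages. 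Your counting is sound: $\#(nB)=m$ follows from the tiling, $n\mid N$ makes residues mod $n$ well defined in $\bz_N$, and the bijection $(u,a)\mapsto u+a$ restricted to a fixed class forces exactly one element of $A$ per class. Your explicit appeal to Proposition~\ref{pr5.0} and Theorem~\ref{thlab} for the CM-property and spectrality fills in a step the paper leaves implicit.
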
	
	
	\begin{proof}
	Let $r=\#B$. Since $\#A$ and $r$ are relatively prime, by Theorem \ref{th3.4}, $rA\oplus B=\bz_N$. So $B\oplus (rA\oplus N\bz)=\bz$. Then, by Lemma \ref{lemco2.5}, if $B'=\{b\in B : b\equiv 0(\mod r)\}$, we have $B'\oplus (A\oplus N/r\bz)=\bz$ and $\#B'=\#B/r=1$. Thus $A\oplus N/r\bz=\bz$, which means that $A$ consists of a single representative from each class modulo $N/r$. By symmetry, the statement is also true for $B$. 
	\end{proof}
	
	\begin{remark}\label{remtl1}
	If $A$ is a complete set of representatives modulo $k$ and $N$ is a multiple of $k$, $N=kl$, then $A$ always tiles $\bz_N$ with $B=\{ki : i=0,\dots,l-1\}$ which is a subgroup of $\bz_N$. Note however that the tile $B$ does not have to be a subgroup of $\bz_N$, even when $\#A=k$ and $\#B=l$ are mutually prime. For example, 
	$\{0,5,6,11\}\oplus\{0,2,10\}=\bz_{12}$ and neither of the two sets is a subgroup of $\bz_{12}$.
	\end{remark}

	\section{Appendix}
	
		\begin{proposition}\label{pr5.6}
	If $A_1,\dots, A_n$ are finite spectral sets in $G_1,\dots, G_n$ respectively, with corresponding spectra $\Lambda_1,\dots,\Lambda_n$, then $A_1\times\dots\times A_n$ is spectral in $G_1\times\dots\times G_n$ with spectrum $\Lambda_1\times\dots\times\Lambda_n$.
	\end{proposition}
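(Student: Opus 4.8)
The plan is to reduce to the case $n=2$ by an immediate induction and then verify one of the equivalent characterizations in Proposition \ref{pr5.5}; the cleanest one for a direct product is condition (iii), equation \eqref{eq5.5.2}, since it factors.

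First I would recall that the Pontryagin dual of $G_1\times\dots\times G_n$ is canonically $\widehat{G_1}\times\dots\times\widehat{G_n}$, where a character $\varphi=(\varphi_1,\dots,\varphi_n)$ acts on $(a_1,\dots,a_n)$ by $\varphi(a_1,\dots,a_n)=\prod_{i=1}^n\varphi_i(a_i)$. In particular $\Lambda_1\times\dots\times\Lambda_n$ is an honest subset of the dual group, and
$$\#(\Lambda_1\times\dots\times\Lambda_n)=\prod_{i=1}^n\#\Lambda_i=\prod_{i=1}^n\#A_i=\#(A_1\times\dots\times A_n),$$
so the cardinality requirement in Definition \ref{def5.4} holds. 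Then, for elements $a=(a_1,\dots,a_n)$ and $a'=(a_1',\dots,a_n')$ of $A_1\times\dots\times A_n$, I would compute
$$\frac{1}{\#(\Lambda_1\times\dots\times\Lambda_n)}\sum_{\varphi\in\Lambda_1\times\dots\times\Lambda_n}\varphi(a-a')=\prod_{i=1}^n\left(\frac{1}{\#\Lambda_i}\sum_{\varphi_i\in\Lambda_i}\varphi_i(a_i-a_i')\right)=\prod_{i=1}^n\delta_{a_ia_i'},$$
where each factor equals $\delta_{a_ia_i'}$ because $\Lambda_i$ is a spectrum for $A_i$, by Proposition \ref{pr5.5}(iii). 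Since $\prod_{i=1}^n\delta_{a_ia_i'}$ equals $1$ exactly when $a_i=a_i'$ for every $i$, that is, when $a=a'$, and $0$ otherwise, the left-hand side equals $\delta_{aa'}$. This is precisely condition (iii) of Proposition \ref{pr5.5} for the pair $(A_1\times\dots\times A_n,\ \Lambda_1\times\dots\times\Lambda_n)$, so $\Lambda_1\times\dots\times\Lambda_n$ is a spectrum.

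There is no substantive obstacle: the only point requiring care is the identification of the dual of a product with the product of the duals together with the resulting factorization of characters, and once that is set up the rest is a one-line splitting of a sum into a product of sums. Alternatively, one could argue via Proposition \ref{pr5.5}(ii), observing that the matrix \eqref{eq5.5.1} attached to $A_1\times\dots\times A_n$ is the Kronecker product of the matrices attached to the individual $A_i$, and a tensor product of unitaries is unitary.
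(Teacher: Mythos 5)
Your proof is correct and takes essentially the same approach as the paper: both arguments rest on the fact that a character of $G_1\times\dots\times G_n$ factors as a product of characters of the $G_i$, so the relevant orthogonality sum over a product set splits into a product of the individual orthogonality sums. The only (immaterial) difference is that you verify condition (iii) of Proposition \ref{pr5.5}, summing over $\Lambda_1\times\dots\times\Lambda_n$, while the paper verifies the definition directly by summing over $A_1\times\dots\times A_n$.
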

	
	\begin{proof}
	Let $(\varphi_1,\dots,\varphi_n),(\varphi_1',\dots,\varphi_n')\in \Lambda_1\times\dots\times\Lambda_n$. Then 
	$$\sum_{(a_1,\dots,a_n)\in A_1\times\dots\times A_n}\varphi_1(a_1)\dots\varphi_n(a_n)\cj\varphi_1'(a_1)\dots\cj\varphi_n'(a_n)$$$$=
	\left(\sum_{a_1\in A_1}\varphi_1(a_1)\cj\varphi_1'(a_1)\right)\dots\left(\sum_{a_n\in A_n}\varphi_n(a_n)\cj\varphi_n'(a_n)\right)=\#A_1\delta_{\varphi_1\varphi_1'}\dots\#A_n\delta_{\varphi_n\varphi_n'}.$$
	\end{proof}

	\begin{lemma}\label{lema1}
	Let $A$ and $B$ be subsets of $\bz_N$ such that $\#A\cdot\#B=N$. Then $A\oplus B=\bz_N$ if and only if $(A-A)\cap(B-B)=\{0\}$. 
	\end{lemma}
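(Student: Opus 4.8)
The plan is to reduce everything to the injectivity of the addition map $\mu\colon A\times B\to\bz_N$, $\mu(a,b)=a+b$, and then use the cardinality hypothesis $\#A\cdot\#B=N$ to upgrade injectivity to bijectivity.

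First I would dispatch the easy direction. If $A\oplus B=\bz_N$, then by definition every element of $A+B$ has a unique representation as $a+b$ with $a\in A$, $b\in B$. Suppose $d\in(A-A)\cap(B-B)$, say $d=a-a'=b'-b$ with $a,a'\in A$ and $b,b'\in B$. Then $a+b=a'+b'$, and uniqueness forces $a=a'$ and $b=b'$, whence $d=0$. So $(A-A)\cap(B-B)=\{0\}$.

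For the converse, assume $(A-A)\cap(B-B)=\{0\}$. The key step is that this condition says exactly that $\mu$ is injective: if $a+b=a'+b'$ then $a-a'=b'-b$ lies in $(A-A)\cap(B-B)=\{0\}$, so $a=a'$ and $b=b'$. Since $\#(A\times B)=\#A\cdot\#B=N=\#\bz_N$, an injective map between finite sets of equal cardinality is a bijection, so $\mu$ is onto and every element of $\bz_N$ has a (necessarily unique) representation $a+b$; that is, $A\oplus B=\bz_N$.

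There is no real obstacle here: the argument is a one-line translation of the difference condition into injectivity of $\mu$, followed by the pigeonhole/counting observation that injectivity plus $\#(A\times B)=\#\bz_N$ gives bijectivity. The only thing to be careful about is keeping the uniqueness part of the definition of $\oplus$ in view throughout, which the injectivity of $\mu$ supplies automatically.
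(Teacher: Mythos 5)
Your proof is correct and follows exactly the paper's argument: the forward direction via uniqueness of representation, and the converse by observing that the difference condition makes the addition map $A\times B\to\bz_N$ injective, hence bijective by the cardinality count. You merely spell out the "clear" direction that the paper leaves to the reader.
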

	
	\begin{proof}
	The direct implication is clear. For the converse, define the map $A\times B \rightarrow\bz_N$, $(a,b)\mapsto a+b$. The condition implies that the map is injective. Since the two sets have the same cardinality $N$, the map is also surjective, so $A\oplus B=\bz_N$.
	\end{proof}
	
		\begin{lemma}\label{lemco2.1}\cite{CoMe99}
	Let $A(x)$ and $B(x)$ be polynomials with coefficients 0 and 1, $n=A(1)B(1)$, and $R$ the set of prime power factors of $N$. If $\Phi_t(x)$ divides $A(x)$ or $B(x)$ for every factor $t>1$ of $N$, then 
	\begin{enumerate}
		\item $A(1)=\prod_{s\in S_A}\Phi_s(1)$ and $B(1)=\prod_{s\in S_B}\Phi_s(1)$. 
		\item $S_A$ and $S_B$ are disjoint sets whose union is $R$. 
	\end{enumerate}
	\end{lemma}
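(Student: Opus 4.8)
The plan is to read the hypotheses of the lemma as the statement that $A\oplus B=\bz_N$, where $N=A(1)B(1)$: this is exactly condition (iv) of Proposition \ref{pr3.2}. The only external ingredients I will use are the standard facts about $\Phi_s(1)$ packaged in Proposition \ref{pr2.15}: for $s>1$ one has $\Phi_s(1)=p$ when $s=p^\alpha$ is a prime power and $\Phi_s(1)=1$ otherwise; and, from the product formula $\prod_{d\mid N}\Phi_d(x)=x^N-1$, dividing by $\Phi_1(x)=x-1$ and evaluating at $x=1$ gives $\prod_{1<d\mid N}\Phi_d(1)=1+1+\dots+1=N$. Since the non--prime-power divisors contribute $1$, this last identity says precisely $\prod_{s\in R}\Phi_s(1)=N$.

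First I would record the easy half of (2): if $s=p^\alpha\in R$, then $s>1$ and $s\mid N$, so by hypothesis $\Phi_s$ divides $A(x)$ or $B(x)$, i.e. $s\in S_A\cup S_B$; hence $R\subseteq S_A\cup S_B$. Next comes the one step needing a little care, the lower bound $A(1)\ge\prod_{s\in S_A}\Phi_s(1)$ (and symmetrically for $B$). The polynomials $\Phi_s$ for $s\in S_A$ are pairwise distinct irreducibles, each dividing $A(x)$, so their product divides $A(x)$ in $\bz[x]$ by Gauss's lemma; writing $A(x)=\big(\prod_{s\in S_A}\Phi_s(x)\big)C(x)$ with $C\in\bz[x]$, the coefficientwise non-negativity of $A$ (the $0$--$1$ hypothesis) forces $C(1)\ge 1$, whence $A(1)=C(1)\prod_{s\in S_A}\Phi_s(1)\ge\prod_{s\in S_A}\Phi_s(1)$.

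Now I would combine everything in a squeeze. Using $A(1)B(1)=N$, the two lower bounds, $R\subseteq S_A\cup S_B$, and $\Phi_s(1)\ge 1$ for every prime power $s$,
\[
N\;\ge\;\Big(\prod_{s\in S_A}\Phi_s(1)\Big)\Big(\prod_{s\in S_B}\Phi_s(1)\Big)\;=\;\Big(\prod_{s\in S_A\cup S_B}\Phi_s(1)\Big)\Big(\prod_{s\in S_A\cap S_B}\Phi_s(1)\Big)\;\ge\;\prod_{s\in R}\Phi_s(1)\;=\;N.
\]
Hence all inequalities are equalities. Equality in the first step forces $A(1)=\prod_{s\in S_A}\Phi_s(1)$ and $B(1)=\prod_{s\in S_B}\Phi_s(1)$ (if $A(1)$ strictly exceeded its bound, so would the product $A(1)B(1)$), which is (1). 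Equality in the last step forces both $\prod_{s\in S_A\cap S_B}\Phi_s(1)=1$ and $\prod_{s\in(S_A\cup S_B)\setminus R}\Phi_s(1)=1$; since $\Phi_s(1)=p\ge 2$ for each prime power $s$, a product of such values can equal $1$ only when empty, so $S_A\cap S_B=\ty$ and $(S_A\cup S_B)\setminus R=\ty$. Together with $R\subseteq S_A\cup S_B$ this gives $S_A\cup S_B=R$, completing (2).

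The argument is short and essentially bookkeeping with the numbers $\Phi_s(1)$; the only genuinely non-formal point is the lower bound $A(1)\ge\prod_{s\in S_A}\Phi_s(1)$, i.e. the passage from ``each $\Phi_s$ ($s\in S_A$) divides $A$'' to ``$\prod_{s\in S_A}\Phi_s$ divides $A$ in $\bz[x]$ with cofactor positive at $1$'', which is exactly where the $0$--$1$ coefficient hypothesis is used and which makes the subsequent equality conclusions possible.
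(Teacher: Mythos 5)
Your argument is correct; the paper itself states this lemma without proof (it is imported from \cite{CoMe99}), and your squeeze $N=A(1)B(1)\ge\prod_{s\in S_A}\Phi_s(1)\prod_{s\in S_B}\Phi_s(1)\ge\prod_{s\in R}\Phi_s(1)=N$ is exactly the Coven--Meyerowitz argument. The one step worth stating cleanly is the lower bound: the distinct irreducibles $\Phi_s$, $s\in S_A$, have product dividing $A(x)$ in $\bz[x]$ with integer cofactor $C$, and $C(1)\ge 1$ simply because $A(1)>0$ and each $\Phi_s(1)>0$ -- which is where the nonnegativity of the coefficients enters, just as you say.
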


	\begin{proposition}\label{pr2.15}\cite{CoMe99}
Let $p$ be a prime.
\begin{enumerate}
	\item A polynomial $P(x)\in\bz[x]$ is divisible by $\Phi_s(x)$ if and only if $P(\omega)=0$ for a primitive $s$-th root of unity $\omega$.
	\item $1+x+\dots+x^{s-1}=\prod_{t>1, t | s}\Phi_t(x)$.
	\item $\Phi_p(x)=1+x+\dots+x^{p-1}$ and $\Phi_{p^{\alpha+1}}(x)=\Phi_p(x^{p^\alpha})$.
	\item 
	$$\Phi_s(1)=\left\{\begin{array}{cc}
	0&\mbox{ if }s=1\\
	q&\mbox{ if $s$ is a power of a prime $q$}\\
	1&\mbox{ otherwise.}
	\end{array}\right.$$
	\item 
	$$\Phi_s(x^p)=\left\{\begin{array}{cc}
	\Phi_{ps}(x)&\mbox{ if $p$ is a factor of $s$}\\
	\Phi_s(x)\Phi_{ps}(x)&\mbox{ if $p$ is not a factor $s$}.
	\end{array}\right.$$
	\item If $s$ and $t$ are relatively prime, then $\Phi_s(x^t)=\prod_{r|t}\Phi_{rs}(x)$.
	
\end{enumerate}

\end{proposition}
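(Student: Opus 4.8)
The plan is to derive all six statements from two foundational facts about cyclotomic polynomials: the defining factorization
$$x^n-1=\prod_{d\mid n}\Phi_d(x),$$
which holds because every $n$-th root of unity is a primitive $d$-th root of unity for exactly one divisor $d$ of $n$, together with the fact that each $\Phi_s$ is a monic polynomial with integer coefficients. Monicity and integrality I would establish first by strong induction on $s$: assuming them for all proper divisors, the factorization above exhibits $\Phi_s(x)$ as the quotient of $x^s-1$ by the monic integer polynomial $\prod_{d\mid s,\,d<s}\Phi_d(x)$, and division by a monic integer polynomial keeps the quotient in $\bz[x]$. The one genuinely substantial input I would cite as standard is the irreducibility of $\Phi_s$ over $\Q$ (equivalently, that $\Phi_s$ is the minimal polynomial over $\Q$ of any primitive $s$-th root of unity); this is the classical Gauss--Dedekind result and is the main obstacle if one insists on a fully self-contained treatment, since every other ingredient is elementary bookkeeping with orders of roots of unity.

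With irreducibility in hand, part (i) is immediate: the ``if'' direction is trivial, and for ``only if,'' $P(\omega)=0$ means the minimal polynomial $\Phi_s$ divides $P$ in $\Q[x]$; since $\Phi_s$ is monic with integer coefficients and $P\in\bz[x]$, the division algorithm keeps the quotient in $\bz[x]$, so $\Phi_s\mid P$ in $\bz[x]$. Part (ii) follows by dividing the defining factorization for $n=s$ by $\Phi_1(x)=x-1$ and using $\tfrac{x^s-1}{x-1}=1+x+\dots+x^{s-1}$. For part (iii), the first identity is (ii) applied to $s=p$ prime, whose only divisor exceeding $1$ is $p$ itself; the second identity $\Phi_{p^{\alpha+1}}(x)=\Phi_p(x^{p^\alpha})$ follows from $\Phi_p(x^{p^\alpha})=\tfrac{x^{p^{\alpha+1}}-1}{x^{p^\alpha}-1}$, whose roots are exactly the $p^{\alpha+1}$-th roots of unity that are not $p^\alpha$-th roots, i.e.\ the primitive $p^{\alpha+1}$-th roots.

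For part (iv) I would evaluate the identity in (ii) at $x=1$, giving $\prod_{t\mid s,\,t>1}\Phi_t(1)=s$. The case $s=1$ is $\Phi_1(1)=0$; the prime-power case $\Phi_{p^{\alpha}}(1)=p$ follows from (iii), since $\Phi_{p^{\alpha}}(1)=\Phi_p(1)=p$; and the general case $\Phi_s(1)=1$ for $s$ with at least two distinct prime factors I would prove by strong induction on $s$. Split $\prod_{t\mid s,\,t>1}\Phi_t(1)=s$ into the product over prime-power divisors of $s$, which already equals $s$, and the product over the remaining divisors, each of which is $1$ by the inductive hypothesis except for the term $\Phi_s(1)$ itself; this forces $\Phi_s(1)=1$.

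The substitution identities (v) and (vi) I would handle uniformly by comparing roots. Each side is monic and divides a polynomial of the form $x^m-1$, hence is separable, so it suffices to match root sets. A number $\zeta$ is a root of $\Phi_s(x^t)$ exactly when $\zeta^t$ is a primitive $s$-th root of unity; writing $n=\operatorname{ord}(\zeta)$ and $g=\gcd(n,t)$, one has $\operatorname{ord}(\zeta^t)=n/g$, so the condition is $n=sg$ with $g\mid t$, and when $\gcd(s,t)=1$ a short computation shows $\gcd(sg,t)=g$ for every $g\mid t$, so each such $g$ is realized. Hence the roots of $\Phi_s(x^t)$ are precisely the primitive $sg$-th roots of unity as $g$ ranges over the divisors of $t$, which are the roots of $\prod_{g\mid t}\Phi_{sg}(x)$; since $\gcd(s,t)=1$ the integers $sg$ are distinct and the factors are pairwise coprime, giving (vi). Specializing to $t=p$ prime yields (v): the constraint $g\mid p$ leaves only $g\in\{1,p\}$, and the order analysis shows that when $p\mid s$ the value $g=1$ is excluded (it would require $\gcd(s,p)=1$), leaving only primitive $ps$-th roots and collapsing the product to $\Phi_{ps}(x)$, whereas when $p\nmid s$ both $g=1$ and $g=p$ occur, giving $\Phi_s(x)\Phi_{ps}(x)$. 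The degree count $\deg\Phi_s(x^p)=p\,\phi(s)$ matches $\phi(ps)$ in the first case and $\phi(s)+\phi(ps)$ in the second, confirming no roots are missed; the only care needed is the short order-of-root computation distinguishing the two cases.
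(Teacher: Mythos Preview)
The paper does not give its own proof of this proposition: it is stated in the Appendix with the citation \cite{CoMe99} and no argument, as a list of standard facts about cyclotomic polynomials used elsewhere in the paper. Your proof is correct and self-contained, deriving everything from the factorization $x^n-1=\prod_{d\mid n}\Phi_d(x)$ together with the irreducibility of $\Phi_s$ over $\Q$, which is exactly the right toolkit; the root-counting arguments for (v) and (vi) are clean and the degree checks confirm nothing is lost.

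One small expository slip: in your treatment of (i) you have the labels ``if'' and ``only if'' reversed. The statement reads ``$\Phi_s\mid P$ if and only if $P(\omega)=0$,'' so the \emph{only if} direction ($\Phi_s\mid P\Rightarrow P(\omega)=0$) is the trivial one, while the \emph{if} direction ($P(\omega)=0\Rightarrow\Phi_s\mid P$) is the one requiring that $\Phi_s$ be the minimal polynomial of $\omega$. Your argument for the nontrivial direction is fine; only the names are swapped.
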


	\begin{lemma}\cite{CoMe99}\label{lemco1.4}
	Let $k>1$ and let $A=k\cj A$ be a finite set of non-negative integers. 
	\begin{enumerate}
		\item $A$ tiles the integers if and only if $\cj A$ tiles the integers.
		\item If $p$ is prime, then $S_{p\cj A}=\{p^{\alpha+1} : p^\alpha\in S_{\cj A}\}\cup \{q^\beta\in S_{\cj A} : q\mbox { prime },q\neq p\}.$
		\item $A(x)$ satisfies (T1) if and only if $\cj A(x)$ satisfies (T1).
		\item $A(x)$ satisfies (T2) if and only if $\cj A(x)$ satisfies (T2). 
	\end{enumerate}
	\end{lemma}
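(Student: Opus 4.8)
The plan is to reduce everything to the single observation that, since $A=k\cj A$, one has $A(x)=\cj A(x^k)$, and to treat first the case $k=p$ prime; the general case $k>1$ then follows by factoring $k=p_1p_2\cdots p_\ell$ into (not necessarily distinct) primes, writing $k\cj A=p_1(p_2(\cdots(p_\ell\cj A)))$, and composing the ``if and only if'' assertions in (i), (iii), (iv). Part (ii) is only claimed for $k$ prime, so nothing extra is needed there.

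For (i) I would argue directly. If $\cj A\oplus\cj C=\bz$, set $C:=k\cj C\oplus\{0,1,\dots,k-1\}$; since $k\cj A\oplus k\cj C=k\bz$ and adjoining $\{0,\dots,k-1\}$ produces one representative per residue mod $k$, we get $A\oplus C=\bz$. Conversely, if $A\oplus C=\bz$, then because $A=k\cj A\subset k\bz$ every translate $A+c$ lies in the residue class of $c$ modulo $k$; hence with $C_0:=\{c\in C: c\equiv 0\ (\operatorname{mod}k)\}$ we have $k\cj A\oplus C_0=k\bz$, and dividing by $k$ gives $\cj A\oplus(C_0/k)=\bz$, so $\cj A$ tiles.

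For (ii) with $k=p$, I would use Proposition \ref{pr2.15}(i): $\Phi_s(x)\mid(p\cj A)(x)=\cj A(x^p)$ iff $\cj A(\omega^p)=0$ for a primitive $s$-th root of unity $\omega$. If $p\mid s$ then $\omega^p$ is a primitive $(s/p)$-th root, so this holds iff $\Phi_{s/p}(x)\mid\cj A(x)$; if $p\nmid s$ then $\omega^p$ is again a primitive $s$-th root, so it holds iff $\Phi_s(x)\mid\cj A(x)$. Restricting to prime powers $s=q^\beta$: for $q=p$ the condition becomes $\Phi_{p^{\beta-1}}(x)\mid\cj A(x)$, which forces $\beta\ge 2$ (as $\cj A(1)=\#\cj A\ne 0$) and $p^{\beta-1}\in S_{\cj A}$; for $q\ne p$ it becomes $q^\beta\in S_{\cj A}$. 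This is exactly the asserted description of $S_{p\cj A}$. Then (iii) follows since $A(1)=\cj A(1)=\#A$, the bijection $p^\alpha\leftrightarrow p^{\alpha+1}$ between the $p$-power parts of $S_{\cj A}$ and $S_{p\cj A}$ preserves the value $\Phi_s(1)=p$ by Proposition \ref{pr2.15}(iv), and the non-$p$-parts coincide, so $\prod_{s\in S_A}\Phi_s(1)=\prod_{s\in S_{\cj A}}\Phi_s(1)$. For (iv) with $k=p$: given powers of distinct primes $s_1,\dots,s_m\in S_{p\cj A}$, at most one is a power of $p$; by (ii) I replace a factor $p^{\alpha+1}$, if present, by $p^\alpha$ to obtain powers of distinct primes in $S_{\cj A}$, apply (T2) for $\cj A$ to get $\Phi_t(x)\mid\cj A(x)$ for the corresponding product $t$, and transport back via Proposition \ref{pr2.15}(v): if $p\mid t$ then $\Phi_{pt}(x)=\Phi_t(x^p)$, while if $p\nmid t$ then $\Phi_t(x)\mid\Phi_t(x^p)=\Phi_t(x)\Phi_{pt}(x)$, so in either case $\Phi_{s_1\cdots s_m}(x)\mid\cj A(x^p)=(p\cj A)(x)$. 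The converse direction of (iv) is symmetric, the one extra ingredient being that $\Phi_u(x^p)\mid\cj A(x^p)$ forces $\Phi_u(x)\mid\cj A(x)$: every primitive $u$-th root $\zeta$ can be written as $z^p$ for a root $z$ of $\Phi_u(x^p)$ (when $p\mid u$, take $z$ a suitable primitive $(pu)$-th root, using surjectivity of $(\bz/pu)^\ast\to(\bz/u)^\ast$; when $p\nmid u$, the $p$-th power map permutes primitive $u$-th roots), whence $\cj A(\zeta)=\cj A(z^p)=0$.

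I expect the (T2) equivalence in (iv) to be the main obstacle: both directions require careful bookkeeping with Proposition \ref{pr2.15}(v) to track how $\Phi_n(x^p)$ splits in terms of $\Phi_n(x)$ and $\Phi_{pn}(x)$ according to whether $p\mid n$, together with the slightly delicate root-of-unity argument showing that divisibility of $\cj A(x^p)$ by a cyclotomic factor descends to divisibility of $\cj A(x)$. Everything else is routine once the identity $A(x)=\cj A(x^k)$ and the residue-splitting in (i) are in place.
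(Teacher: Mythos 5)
Your proof is correct; the paper itself states Lemma \ref{lemco1.4} without proof, citing \cite{CoMe99}, and your argument (reduction to $A(x)=\cj A(x^k)$, the prime-by-prime induction, and the cyclotomic bookkeeping via Proposition \ref{pr2.15}(i) and (v)) is essentially the standard Coven--Meyerowitz argument for this lemma. In particular you correctly handle the two delicate points: that $p\notin S_{p\cj A}$ (so the shift $p^{\alpha}\leftrightarrow p^{\alpha+1}$ is a genuine bijection on prime powers), and that divisibility of $\cj A(x^p)$ by $\Phi_t(x^p)$ or $\Phi_t(x)$ descends to divisibility of $\cj A(x)$ by $\Phi_t(x)$ via surjectivity of the $p$-th power map onto primitive $t$-th roots.
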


	\begin{lemma}\label{lemco2.5}\cite{CoMe99}
	Suppose $A\oplus C=\bz$, where $A$ is a finite set of non-negative integers, $k>1$ and $C\subset k\bz$. For $i=0,1,\dots,k-1$, let $A_i=\{a\in A : A\equiv i(\mod k)\}$, $a_i=\min(A_i)$, and $\cj A_i=\{a-a_i : a\in A_i\}/k$. Then 
	\begin{enumerate}
		\item $A(x)=x^{a_0}\cj A_0(x^k)+x^{a_1}\cj A_1(x^k)+\dots+x^{a_{k-1}}\cj A_{k-1}(x^k)$. 
		\item Every $\cj A_i\oplus C/k=\bz$. 
		\item The elements of $A$ are equally distributed modulo $k$, i.e., every $\#\cj A_i=(\#A)/k$. 
		\item $S_{\cj A_0}=S_{\cj A_1}=\dots=S_{\cj A_{k-1}}$. 
		\item When $k$ is prime $S_A=\{k\}\cup S_{k\cj A_0}$ and if every $\cj A_i(x)$ satisfies (T2), then $A(x)$ satisfies (T2). 
	\end{enumerate}
	\end{lemma}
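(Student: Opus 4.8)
The plan is to prove the five parts in the order stated, each feeding the next. Part (i) is a bookkeeping step: regroup $A(x)=\sum_{a\in A}x^a$ by residue class modulo $k$. First note each $A_i$ is nonempty — since $C\subset k\bz$ meets only the class $0$ modulo $k$, $A$ must meet every class — so $a_i=\min A_i$ is defined, and for $a\in A_i$ one has $a=a_i+k\bar a$ with $\bar a\in\cj A_i$; summing over the $k$ classes yields the identity. For (ii) I would fix $i$ and $m\in\bz$ and apply the unique decomposition $\bz=A\oplus C$ to $n:=a_i+km$: writing $n=a+c$ with $a\in A$, $c\in C$, the condition $C\subset k\bz$ forces $a\equiv a_i\pmod k$, i.e.\ $a\in A_i$, and dividing by $k$ turns this into a decomposition $m=\bar a+\bar c$, $\bar a\in\cj A_i$, $\bar c\in C/k$, whose uniqueness is inherited from that of the decomposition of $n$; so $\cj A_i\oplus C/k=\bz$. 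For (iii) and (iv) I would use that, by (ii), the one fixed set $C/k$ is a common tiling complement of all the $\cj A_i$, hence periodic: $C/k=D\oplus M\bz$ with $D$ (taken in $[0,M)$) and $M$ independent of $i$. Proposition \ref{pr3.2} then gives $\cj A_i\oplus D=\bz_M$, so $\#\cj A_i=M/\#D$ is the same for all $i$; since the $A_i$ partition $A$, this common value is $\#A/k$, which is (iii). For (iv), Proposition \ref{pr3.2}(iv) puts the pair $(\cj A_i(x),D(x))$ into the situation of Lemma \ref{lemco2.1}, so $S_{\cj A_i}$ is the complement of $S_D$ among the prime-power factors of $M$; since $D$ and $M$ do not depend on $i$, all the $S_{\cj A_i}$ coincide.

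Part (v) contains the real content. First I would pin down $S_A$. Reducing the identity in (i) modulo $x^k-1$ and using $\cj A_i(1)=\#A/k$ from (iii) gives $A(x)\equiv(\#A/k)\,\Phi_k(x)\pmod{x^k-1}$, whence $\Phi_k\mid A$ and $k\in S_A$. By (iv) and Lemma \ref{lemco1.4}(ii) the sets $S_{k\cj A_i}$ all equal $S_{k\cj A_0}$, and if $s\in S_{k\cj A_0}$ then $\Phi_s\mid\cj A_i(x^k)$ for every $i$, so (i) gives $S_{k\cj A_0}\subset S_A$, while clearly $k\notin S_{k\cj A_0}$. To see nothing else is in $S_A$, I would use that $A$ is a tile and so satisfies (T1) (Theorem \ref{th4.13}), and combine (T1) for $A$ and for $\cj A_0$ with Lemma \ref{lemco1.4}(ii), Proposition \ref{pr2.15}(iv) and part (iii) to get $\prod_{s\in\{k\}\cup S_{k\cj A_0}}\Phi_s(1)=k\cdot\#\cj A_0=\#A=\prod_{s\in S_A}\Phi_s(1)$; since $\Phi_s(1)>1$ for every prime power $s$, no extra prime power can lie in $S_A$, so $S_A=\{k\}\cup S_{k\cj A_0}$.

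For the (T2) assertion, take prime powers $s_1,\dots,s_m\in S_A$ of distinct primes and set $P=s_1\cdots s_m$. If none of the $s_j$ is a power of $k$, they all lie in $S_{k\cj A_0}=S_{k\cj A_i}$, and since $\cj A_i$ has (T2) so does $k\cj A_i$ by Lemma \ref{lemco1.4}(iv); hence $\Phi_P\mid\cj A_i(x^k)$ for every $i$, and $\Phi_P\mid A$ by (i). If one of them, say $s_1=k^\gamma$, is a power of $k$, let $t=s_2\cdots s_m$ (coprime to $k$); translating through Lemma \ref{lemco1.4}(ii), the powers $s_2,\dots,s_m$, together with $k^{\gamma-1}$ when $\gamma\ge2$, lie in $S_{\cj A_i}$, so (T2) for $\cj A_i$ gives $\Phi_u\mid\cj A_i$ with $u=t$ if $\gamma=1$ and $u=k^{\gamma-1}t$ if $\gamma\ge2$; then $\Phi_u(x^k)\mid\cj A_i(x^k)$, so $\Phi_u(x^k)\mid A$ by (i), and Proposition \ref{pr2.15}(v) shows $\Phi_P$ ($=\Phi_{kt}$ when $\gamma=1$, $=\Phi_{k^\gamma t}$ when $\gamma\ge2$) is a polynomial factor of $\Phi_u(x^k)$, giving $\Phi_P\mid A$.

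I expect the main obstacle to be the exact identification $S_A=\{k\}\cup S_{k\cj A_0}$ in (v): a frontal attack through the vanishing $A(\omega)=0$ at primitive roots of unity is awkward precisely because the fibers $\cj A_i$ need not be equal (only their cardinalities and their $S$-sets are, by (iii) and (iv)), so the cardinality/(T1) squeeze above is the cleaner route. After that, the only remaining care is the bookkeeping of cyclotomic factors involving the prime $k$ in the (T2) step, which is exactly what the substitution identities in Proposition \ref{pr2.15}(v) are for.
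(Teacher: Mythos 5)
Your proof is correct. The paper itself gives no proof of this lemma --- it is quoted verbatim from Coven--Meyerowitz \cite{CoMe99} in the appendix --- so there is nothing in-paper to compare against; your argument (regrouping $A(x)$ by residue class, transporting the tiling to each fibre $\cj A_i$, using periodicity of $C/k$ plus Proposition \ref{pr3.2} and Lemma \ref{lemco2.1} for (iii)--(iv), and the (T1)/cardinality squeeze together with Proposition \ref{pr2.15}(v) for (v)) is essentially the original Coven--Meyerowitz argument and is complete, the only glossed-over point being the trivial degenerate case $P=s_1=k$ in the (T2) step, which holds because $k\in S_A$.
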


 \noindent {\it Acknowledgments} This work was partially supported by a grant from the Simons Foundation (\#228539 to Dorin Dutkay). Most of the paper was written while Dorin Dutkay was visiting the Institute of Mathematics of the Romanian Academy, with the support of the Bitdefender Invited Professorship. We would like to thank Professors Lucian Beznea, {\c S}erban Stratila and Dan Timotin for their kind hospitality.

\bibliographystyle{alpha}	
\bibliography{eframes}
\end{document}